\theoremstyle{plain}
\newtheorem{theorem}{Theorem}[section]
\newtheorem{corollary}[theorem]{Corollary}
\newtheorem{lemma}[theorem]{Lemma}
\newtheorem{prop}[theorem]{Proposition}
\theoremstyle{definition}
\theoremstyle{remark}
\newcommand{\vertiii}[1]{{\left\vert\kern-0.25ex\left\vert\kern-0.25ex\left\vert #1
    \right\vert\kern-0.25ex\right\vert\kern-0.25ex\right\vert}}
\newcommand{\lii}{\langle\kern-0.25ex\langle}
\newcommand{\rii}{\rangle\kern-0.25ex\rangle}
\newcommand{\nri}{n\rightarrow\infty}
\newcommand{\bbR}{\mathbb{R}}
\newcommand{\bbC}{\mathbb{C}}
\newcommand{\bbD}{\mathbb{D}}
\newcommand{\bbN}{\mathbb{N}}
\newcommand{\mca}{\mathcal{A}}
\newcommand{\mcj}{\mathcal{J}}
\newcommand{\mcn}{\mathcal{N}}
\newcommand{\mcp}{\mathcal{P}}
\newcommand{\mcl}{\mathcal{L}}
\newcommand{\mcr}{\mathcal{R}}
\newcommand{\mcu}{\mathcal{U}}
\newcommand{\bard}{\overline{\bbD}}
\newcommand{\eitheta}{e^{i\theta}}
\DeclareMathOperator{\Pol}{{\mathcal P}}
\DeclareMathOperator*{\supp}{supp} \DeclareMathOperator*{\Real}{Re}
\DeclareMathOperator*{\Imag}{Im}
\title[Approximants, Jacobi matrices, and Jentzsch-type theorems]{Zeros of optimal polynomial approximants: \\ Jacobi matrices and Jentzsch-type theorems}
\author[B\'en\'eteau]{Catherine B\'en\'eteau}
\address{Department of Mathematics, University of South Florida, 4202 E. Fowler Avenue, Tampa, FL 33620, USA.}
\email{cbenetea@usf.edu}
\author[Khavinson]{Dmitry Khavinson}
\address{Department of Mathematics, University of South Florida, 4202 E. Fowler Avenue, Tampa, FL 33620, USA.}
\email{dkhavins@usf.edu}
\author[Liaw]{Constanze Liaw}
\address{CASPER and Department of Mathematics, Baylor University, One Bear Place \#97328, Waco, TX 76798-7328, USA.}
\email{Constanze$\underline{\,\,\,}$Liaw@baylor.edu}
\author[Seco]{Daniel Seco}
\address{Departament de Matem\`atiques i Inform\`atica, Universitat de
Barcelona, Gran Via de les Corts Catalanes 585, 08007 Barcelona,
Spain.} \email{dseco@mat.uab.cat}
\author[Simanek]{Brian Simanek}
\address{Department of Mathematics, Baylor University, One Bear Place \#97328, Waco, TX 76798-7328, USA.}
\email{Brian$\underline{\,\,\,}$Simanek@baylor.edu}
\date{\today}
\subjclass[2010]{Primary 46E22, 47B36; Secondary 30H20, 42C05.}
\begin{document}

\begin{abstract}
We study the structure of the zeros of optimal polynomial approximants to reciprocals of functions in Hilbert spaces of analytic functions in the unit disk.
In many instances, we find the minimum possible modulus of occurring zeros via a nonlinear extremal problem associated with norms of Jacobi matrices. We examine global properties of these zeros and prove Jentzsch-type theorems describing where they accumulate.  As a consequence, we obtain detailed information regarding zeros of reproducing kernels in weighted spaces of analytic functions.

\end{abstract}

\maketitle

\normalsize

%%%%%%%%%%%%%%%%%%%%%%%%%%%%%%%%%%%%%%%%%
%%%%%%%%%%%%%%%%%%%%%%%%%%%%%%%%%%%%%%%%%
\section{Introduction}\label{Intro}

Our main object of study will be Hilbert spaces of holomorphic functions on the unit disk $\bbD$.  Let
$\omega:=\{\omega_n\}_{n\geq0}$ be a sequence of positive real numbers satisfying
\begin{align}\label{etacond}
\omega_0=1,\qquad\qquad\lim_{\nri}\frac{\omega_n}{\omega_{n+1}}=1.
\end{align}
We denote by $H^2_{\omega}$ the space of all functions $f(z)$ with Maclaurin series
\begin{align}\label{fsum}
f(z)=\sum_{n=0}^{\infty}a_nz^n,\qquad\qquad|z|<1
\end{align}
for which
\[
\|f\|^2_{\omega} := \sum_{n=0}^{\infty}|a_n|^2\omega_n<\infty.
\]
The space $H^2_{\omega}$ is a reproducing kernel Hilbert space and
we denote by $\langle \cdot,\cdot \rangle_\omega$ its inner product.

We emphasize that $H^2_{\omega}$ is a function space with a specific choice of norm given by the sequence $\omega$.  In \cite{BCLSS,BKLSS}, the authors were interested in norm estimates only, while here our results differ substantially for different yet equivalent norms, thus depending drastically on the geometry of the space.
Cyclicity (and the concept of optimal approximant) for these spaces was studied in \cite{FMS}, where the role of the limit \eqref{etacond} is to ensure that functions analytic in a disk larger than the unit disk belong to all the spaces, and that all functions in these spaces are analytic in the unit disk. The most interesting examples of such spaces arise when $\omega_n=(n+1)$ (the Dirichlet space $\mathcal{D}$), when $\omega_n=1$ (the Hardy space
$H^2$), and when $\omega_n=(n+1)^{-1}$ (the Bergman space $\mathcal{A}^2$). For a detailed account of function theory in these classical spaces, we refer the reader to \cite{Du,DS,EKMRBook,HKZ}.  Following
the investigation by several of the authors in \cite{BKLSS}, we will pay special attention to two cases: first, when
$\omega_n=(n+1)^{\alpha}$ for some $\alpha\in\bbR$, in which case $H^2_{\omega}$ will be denoted $D_\alpha$. We call these particular spaces Dirichlet-type spaces. The second group of spaces where $\omega_n = {\beta + n + 1 \choose n}^{-1}$ for some $\beta>-1$ will be called Bergman-type spaces, denoted $H^2_{\omega}
=\mathcal{A}^2_{\beta}$. For this choice of sequence $\omega$, the norm of a function $f$ satisfies
\begin{equation}\label{Bergmannorm}
\|f\|^2_{\omega}= (\beta + 1) \int_{\bbD} |f(z)|^2 (1-|z|^2)^{\beta}dA(z),
\end{equation}
where $dA$ is normalized area measure.  This is easily verified by integration in polar coordinates.

Recall that a function $f$ is cyclic if and only if there exists a sequence of polynomials $\{q_n\}_{n\in\bbN}$ such that the functions $\{q_nf\}_{n\in\bbN}$ converge to $1$ in $H^2_\omega$ as $\nri$.  For example, the function $1$ is cyclic in all these spaces.  For cyclic functions $f$, the polynomial $q_n$ approximates $1/f$ in some sense, even when $1/f$ is not in the space. It is natural then to look for the best approximants.  This motivates the following definition, which need not be restricted to cyclic vectors.

For $f \in H^2_{\omega}$, we say that a polynomial $p_n$ of degree at most $n\in \mathbb{N}$ is an {\it optimal approximant} of order $n$ to $1/f$ if $p_n$ minimizes $\|p f-1\|_\omega$ among all polynomials $p$ of degree at most $n$. The existence and uniqueness of an optimal approximant is clear since the space $f\cdot\mathcal{P}_n$, where $\Pol_{n}$ is the space of all polynomials of degree at most $n$, has finite dimension. Given $f \in H^2_{\omega}$, define $\{\varphi_n\}_{n=0}^{\infty}$ to be the orthonormal polynomials in the ``weighted" space related to $f$, that is, the polynomial $\varphi_k$ has degree exactly $k$ and this sequence satisfies
\[
\langle \varphi_k f , \varphi_j f \rangle_\omega = \delta_{k,j}.
\]
In \cite[Proposition 3.1]{BKLSS}, the authors showed that the optimal polynomial approximants in $D_\alpha$ are related to the orthonormal polynomials. The argument there yields more
 generally that in all
$H_\omega^2$, we have
\begin{equation}\label{optortho}
    p_n(z) =  \overline{f(0)} \sum_{k=0}^n \overline{\varphi_k(0)} \varphi_k(z).
\end{equation}
In particular, in the classical Hardy space $H^2$, the optimal approximants are scalar multiples of the well-known ``reversed" orthonormal polynomials, obtained from $\varphi_k$ by conjugating and reversing coefficients (see, for example, \cite{Ger}). Morover, as noticed in \cite{BKLSS}, since $p_nf$ is the orthogonal projection of $1$ onto the space $f \cdot \Pol_{n}$, then for any polynomial $q \in \Pol_{n}$, we have $\langle qf, p_nf \rangle_{\omega} = \langle qf,
1 \rangle_{\omega} = q(0) f(0)$. Therefore, assuming $f(0) \neq 0,$ the polynomial $ p_n(z)/\overline{f(0)}$ equals $ k_n(z,0),$ where $k_n(z,w)$ is the reproducing kernel for $\Pol_{n}$ in the weighted space with weighted inner product given by $\langle g,h \rangle_{f} := \langle gf,hf\rangle_{\omega}$. Thus, the zeros of the optimal approximants are precisely the zeros of these reproducing kernels $k_n(z,0)$, and are related to the behavior of the zeros of the orthogonal polynomials.  In what follows, for simplicity, we will call these reproducing kernels \emph{weighted reproducing kernels}.

In \cite[Theorem 4.2]{BKLSS}, the authors showed that if $f \in D_{\alpha}$ with $f(0) \neq 0$, and if $\alpha \geq 0,$ then the zeros of the optimal approximants are \emph{all} outside the closed unit disk, while if $\alpha < 0$, the zeros are outside the closed disk centered at the origin of radius $2^{\alpha/2}$, but can indeed penetrate the unit disk.  The authors posed the question of whether $2^{\alpha/2}$ is optimal. Some of our main results are strict
improvements on the bound $2^{\alpha/2}$ (see Theorem \ref{newb}) and on the precise criteria regarding which sequences $\{\omega_n\}_{n\geq0}$ admit a function $f\in H^2_{\omega}$
with an optimal polynomial approximant to $1/f$ that vanishes inside the unit disk (see Corollary \ref{in}).

Motivated by these observations, we are led to the following questions:

\begin{itemize}
\item[(i)]  Given $\omega=\{\omega_n\}_{n\geq0}$, does there exist a
function $f\in H^2_{\omega}$ that has an optimal polynomial approximant
to $1/f$ vanishing inside the unit disk $\bbD$?
\item[(ii)]  If the answer to question (i) is ``yes", then what is
\begin{align}\label{iinf}
\inf_{n \in \bbN} \left\{|z|:p_n(z)=0,
\|p_nf-1\|_\omega=\min_{q\in\mcp_n}\|qf-1\|_\omega,\,f\in
H^2_{\omega}\right\}?
\end{align}
\item[(iii)]  Is the infimum in (\ref{iinf}) a minimum?
\item[(iv)]  If the answer to question (iii) is ``yes", then what are the extremal functions $f\in H^2_{\omega}$ for which the minimum in (\ref{iinf}) is attained?
\item[(v)] When the degree $n \rightarrow \infty$, where do the zeros of the optimal approximants accumulate?  Is there an analogue of the celebrated theorem of Jentzsch \cite{Di,DHK,La,Ti} about the accumulation of the zeros of Taylor polynomials?
\end{itemize}

\noindent The goal of this paper is to answer these questions.

We begin Section \ref{s-EP} by formulating an extremal problem whose
solution will lead us to a resolution of questions (i) through (iv).
We show that the answer to the first question depends on the choice
of $\omega$. More precisely, if there exist $n \geq 1$ and  $k \in
\bbN$ such that $\omega_{k+n} < \omega_{k}/4$, there is a positive
answer to (i), while for any nondecreasing sequence $\omega$ the
answer is negative. We will be able to reduce the problem to
polynomial functions $f$ satisfying a set of necessary conditions
that can be translated into a recurrence relation for their
Maclaurin coefficients. This recurrence relation connects our theory
with that of orthogonal polynomials and therefore, in Section
\ref{Ortho}, we present background information on orthogonal
polynomials and Jacobi matrices. In Section \ref{solve}, following
an idea of McDougall in \cite{McD}, we establish a precise
connection between solutions to our extremal problem and norms of
certain Jacobi matrices. In particular, we answer the questions
(i)-(iv) in terms of the norms of these Jacobi matrices. Section
\ref{exam} is devoted to the study of the particular cases of
$D_{\alpha}$ and $\mathcal{A}^2_{\beta}$ spaces. For the former, we
answer questions (i) and (iii) in terms of the parameter $\alpha$,
and regarding question (ii), we improve the minimal modulus bound
obtained in \cite{BKLSS}.  For the latter spaces, we answer all the
questions (i)-(iv) in terms of the parameter $\beta$. In Section
\ref{Jentzsch}, we answer question (v).  There, we explore the
global structure of the zeros of optimal polynomial approximants and
prove results reminiscent of Jentzsch's Theorem for Taylor
polynomials, yielding information about the limit distributions of
the zeros for large degree polynomial approximants. In Section
\ref{particular}, we settle the question of explicitly determining
the optimal polynomial approximants in the Hardy space $H^2$ for the
function $f(z) = (1-z)^{a}$ for $\mbox{Re} \, a > 0$, thus resolving
earlier inquiries made in \cite{BKLSS,Seco}. We conclude in Section
\ref{ConcRem} with some remarks concerning future research.

%%%%%%%%%%%%%%%%%%%%%%%%%%%%%%%%%%%%%%%%%

\bigskip

\noindent\textbf{Acknowledgements.}  The authors would like to thank A. Sola for useful discussions, and the National Science Foundation for their support of the SEAM conference in 2016, where some of this work was carried out.  B\'en\'eteau and Khavinson are grateful to the Centre de Recherches Math\'{e}matiques in Montr\'{e}al for hosting them during the spring semester of 2016. The work of Liaw was partially supported by grant \#426258 from the Simons Foundation. Seco acknowledges support from Ministerio de Econom\'ia y Competitividad Project MTM2014-51824-P and from Generalitat de Catalunya Project 2014SGR289.

\section{An Extremal Problem}\label{s-EP}

In this section, we give a precise formulation of an extremal
problem that will lead us to the solutions of questions (i) through
(iv) for the spaces $H^2_{\omega}$, where
$\omega=\{\omega_n\}_{n\geq0}$ is a fixed sequence of positive real
numbers satisfying (\ref{etacond}). Let $f \in H^2_{\omega}$.  From
the construction of the optimal approximants using the Gram-Schmidt
process as described in \cite{BCLSS}, one can show that the zero
$z_1$ of the first order optimal approximant $p_1$ is given by
\[
z_1 =  \frac{\| z f \|_\omega^2}{ \langle f, zf \rangle_\omega }
\]
as long as the denominator is not zero (see \cite[Lemma 7.1]{BKLSS} for details). Furthermore, by absorbing zeros of an approximating polynomial into the function $f$, it is easy to see that the answer to question (i) in Section \ref{Intro} can be reduced to studying zeros of first order approximants only.  Hence, to answer question (ii) from Section \ref{Intro} it suffices to calculate the following quantity:
\begin{equation}\label{extremal}
    \mcu_\omega:=\sup_{f \in H^2_{\omega}}  \frac{| \langle f, zf \rangle_\omega |}{\| z f \|_\omega^2}.
\end{equation}
The solution to question (ii) from Section \ref{Intro} is then given by (see \cite{Seco})
\[
\inf_{n\in\bbN}\left\{|z|:p_n(z)=0,\|p_nf-1\|_\omega=\min_{q\in\mcp_n}\|qf-1\|_\omega,\,f\in H^2_{\omega}\right\}=\frac{1}{\mcu_\omega}.
\]
Note that condition \eqref{etacond} on $\omega$ ensures that
 $\mcu_\omega < \infty.$

Non-linear extremal problems similar to (\ref{extremal}) in $D_\alpha$ for $\alpha\ge 0$ have been previously considered by Fisher and McDougall in \cite{Fish,McD}.  In \cite{McD}, McDougall noticed interesting connections between the extremal problem she considered, orthogonal polynomials, and Jacobi matrices.  This connection is also relevant to our investigation and will be essential to some of our main results, which we now formulate.

\begin{theorem}\label{dalpha}
Consider the extremal problem of finding $\mcu_{\omega}$ given by \eqref{extremal}.
\begin{itemize}
\item[(A)] If there exist $n, k \in \bbN$ such
that $\omega_{k+n+1} < \omega_{k+1}/4$, then $\mcu_{\omega}>1$ and
the supremum defining $\mcu_\omega$ is actually a maximum, i.e., an
extremal function exists.
\item[(B)] If $\omega$ is nondecreasing, then $\mcu_\omega=1$ and the supremum defining $\mcu_\omega$ is not a maximum, i.e., there is no extremal function.
\end{itemize}
\end{theorem}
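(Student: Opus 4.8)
The plan is to translate the extremal problem \eqref{extremal} into a question about the norm of a Jacobi matrix and then read off both parts from spectral theory. Writing $f(z)=\sum_{n\ge0}a_nz^n$, a direct computation gives $\|zf\|_\omega^2=\sum_{n\ge0}|a_n|^2\omega_{n+1}$ and $\langle f,zf\rangle_\omega=\sum_{m\ge1}\omega_m a_m\overline{a_{m-1}}$. I would normalize by setting $\beta_m:=a_{m-1}\sqrt{\omega_m}$ for $m\ge1$, so that $\|zf\|_\omega^2=\sum_{m\ge1}|\beta_m|^2=\|\beta\|^2$ and
\[
\langle f,zf\rangle_\omega=\sum_{m\ge1}\gamma_m\,\beta_{m+1}\overline{\beta_m},\qquad \gamma_m:=\sqrt{\omega_m/\omega_{m+1}}.
\]
Since all $\gamma_m>0$, rotating the phases of the $\beta_m$ shows that the supremum of the modulus of this form over $\|\beta\|=1$ is attained on nonnegative sequences and equals $\tfrac12\sup_{\|\beta\|=1}\langle J\beta,\beta\rangle$, where $J$ is the self-adjoint Jacobi matrix on $\ell^2(\{1,2,\dots\})$ with zero diagonal and off-diagonal entries $\gamma_m$. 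Conjugating $J$ by $\mathrm{diag}((-1)^m)$ sends $J$ to $-J$, so $\sigma(J)$ is symmetric about $0$ and $\sup\sigma(J)=\|J\|$. Hence $\mcu_\omega=\tfrac12\|J\|$, and the supremum in \eqref{extremal} is attained precisely when $\|J\|$ is an eigenvalue of $J$ with an $\ell^2$ eigenvector, which via $\beta\mapsto f$ produces the extremal function.

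Because \eqref{etacond} forces $\gamma_m\to1$, the operator $J-J_0$ (with $J_0$ the free Jacobi matrix, all off-diagonals $1$) is compact, so $\sigma_{\mathrm{ess}}(J)=\sigma_{\mathrm{ess}}(J_0)=[-2,2]$ and $\|J\|\ge2$ always. Thus $\mcu_\omega>1$ is equivalent to $\sup\sigma(J)>2$, in which case $\sup\sigma(J)$ lies strictly above the essential spectrum and is automatically a discrete eigenvalue with an $\ell^2$ eigenvector; this is exactly what ties ``$\mcu_\omega>1$'' to ``an extremal function exists'' in both parts. For part (B), if $\omega$ is nondecreasing then $\gamma_m\le1$ for all $m$, so for any $\beta$ one has $\langle J\beta,\beta\rangle\le 2\sum_m\gamma_m|\beta_{m+1}||\beta_m|\le\langle J_0|\beta|,|\beta|\rangle\le2\|\beta\|^2$, giving $\|J\|\le2$ and hence $\mcu_\omega=1$. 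Non-attainment follows by tracing the equality cases: an extremal unit vector would force $|\beta|$ to be an eigenvector of $J_0$ at the top of its spectrum, but $J_0$ has purely absolutely continuous spectrum $[-2,2]$ with no $\ell^2$ eigenvector at $2$, a contradiction.

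For part (A), the hypothesis $\omega_{k+n+1}<\omega_{k+1}/4$ is exactly $\prod_{j=k+1}^{k+n}\gamma_j>2$. Testing $\langle J\beta,\beta\rangle$ against vectors supported on the window $\{k+1,\dots,k+n+1\}$ reduces matters to the finite $(n+1)\times(n+1)$ tridiagonal block $J_{\mathrm{fin}}$ with off-diagonals $c_i:=\gamma_{k+1+i}$, and the crux is to show
\[
\prod_{i=0}^{n-1}c_i>2\ \Longrightarrow\ \|J_{\mathrm{fin}}\|>2.
\]
This finite estimate is the main obstacle, and I would prove its contrapositive using the three-term recurrence $D_m(\lambda)=\lambda D_{m-1}(\lambda)-c_{m-2}^2 D_{m-2}(\lambda)$ for the characteristic polynomials of the principal blocks. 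If $\|J_{\mathrm{fin}}\|\le2$ then, by eigenvalue interlacing, $E_m:=D_m(2)\ge0$ for every block; writing $R_m:=E_m/E_{m-1}$ (so $R_1=2$ and $c_{m-2}^2=R_{m-1}(2-R_m)$) one telescopes
\[
\prod_{i=0}^{n-1}c_i^2=R_1\,(2-R_{n+1})\prod_{l=2}^{n}R_l(2-R_l)\le 2\cdot2\cdot1=4,
\]
using $t(2-t)\le1$ and $2-R_{n+1}\le2$, whence $\prod c_i\le2$. Once $\|J_{\mathrm{fin}}\|>2$ is known, the Rayleigh quotient of its top eigenvector (extended by $0$) shows $\sup\sigma(J)>2$, so $\mcu_\omega>1$, and by the second paragraph the maximizing eigenvector yields an extremal $f\in H^2_\omega$. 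The strictness of interlacing for Jacobi matrices with nonvanishing off-diagonals handles the borderline case in which an intermediate block already reaches norm $2$, ensuring $\sup\sigma(J)>2$ strictly.
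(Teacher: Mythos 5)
Your proposal is correct, and it reaches the paper's conclusions by a genuinely different route. The paper proves Theorem \ref{dalpha} in two installments: the inequality $\mcu_\omega>1$ in case (A) comes from a one-line computation with the explicit test function $f_{k,n}=z^kT_n\bigl(\tfrac{1+z}{1-z}\bigr)$, while existence of an extremal is deferred to Sections \ref{s-EP}--\ref{solve}, where a finite truncation (Theorem \ref{finlim}), Lagrange multipliers, and Favard's theorem yield $\mcu_\omega=\tfrac12\|\mcj_\omega\|$ (Theorem \ref{normequ}), and then Poincar\'e's theorem shows the coefficient sequence $\{P_n(t)\}$ decays exponentially for eigenvalues $t>2$, producing the extremal $f^*$ (Theorem \ref{geq2}, Corollary \ref{ext2}). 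You instead obtain the identity $\mcu_\omega=\tfrac12\|J\|$ in one stroke via the substitution $\beta_m=a_{m-1}\sqrt{\omega_m}$, which turns \eqref{extremal} directly into a Rayleigh quotient --- note only that the identification of $H^2_\omega$ with $\ell^2$ in the $\beta$ variables uses the boundedness of $\omega_m/\omega_{m+1}$ and its reciprocal, which \eqref{etacond} supplies; and you get existence in (A) from abstract perturbation theory (Weyl: $\sigma_{\mathrm{ess}}(J)=[-2,2]$, so $\sup\sigma(J)>2$ is automatically a discrete eigenvalue with $\ell^2$ eigenvector), entirely avoiding Poincar\'e's theorem, at the cost of losing the extra information the paper's route provides (the explicit orthogonal-polynomial form of $f^*$ and its analyticity across the unit circle). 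For the strict inequality in (A), where the paper has its slick test function, you prove the sharp finite-block lemma $\prod_i c_i>2\Rightarrow\|J_{\mathrm{fin}}\|>2$ by telescoping the characteristic-polynomial recurrence at $\lambda=2$; I checked the algebra ($R_1=2$, $c_{m-2}^2=R_{m-1}(2-R_m)$, $t(2-t)\le1$, and positivity of the intermediate $E_m$ guaranteed --- as you note --- by strict interlacing in the borderline case), and it is correct and in fact tight: the configuration $(\sqrt2,1,\dots,1,\sqrt2)$ attains product $2$ at norm exactly $2$, so your argument additionally shows the constant $4$ in the hypothesis $\omega_{k+n+1}<\omega_{k+1}/4$ is exactly the threshold detectable by window-supported vectors, a sharpness statement the paper's test function only exhibits in one direction. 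Your treatment of (B) --- domination $\langle J\beta,\beta\rangle\le\langle J_0|\beta|,|\beta|\rangle\le2\|\beta\|^2$ plus the absence of an $\ell^2$ eigenvector of the free Jacobi matrix at $2$ --- replaces the paper's Cauchy--Schwarz equality-case argument and is equally valid.
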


The most difficult part of this theorem is the existence of an extremal in case (A), to which we dedicate Section \ref{solve}. In Corollary \ref{ext2}, we shall find the extremal function in that case. We now give a proof of the remaining statements in Theorem \ref{dalpha}, excluding existence of the extremal in case (A).

\begin{proof}
From the definition of $\mcu_{\omega}$ and the Cauchy-Schwarz
inequality, we obtain that
\[
\mcu_{\omega} \leq \sup_{f \in H^2_{\omega}}
\frac{\|f\|_{\omega}}{\|zf\|_{\omega}}.
\]
Now, if $\omega$ is nondecreasing, the quotient $\frac{\|f\|_{\omega}}{\|zf\|_{\omega}}$
is less than or equal to $1$.  It is also not difficult to see from \eqref{etacond} that
 $\mcu_{\omega}$  is always at least 1.  Therefore, when $\omega$ is nondecreasing,
$\mcu_{\omega} = 1$ and the above inequality is an equality. Moreover, if there were an extremal function $f^*$,
equality would have to hold in the Cauchy-Schwarz inequality, which would imply that $zf^*=tf^*$ for some scalar
$t$.  Since this can only happen when $f^* \equiv 0$, we see that there is no extremal function. This proves (B).

In order to show that in case (A), $\mcu_{\omega} >1$, it is enough to consider the function
\[
f_{k,n} = z^k T_n \left(\frac{1+z}{1-z}\right),
\]
where $T_n(g)$ denotes the Taylor polynomial of $g$ of degree $n$.  We can then compute
\[
\frac{| \langle f_{k,n}, zf_{k,n} \rangle_{\omega} |}{\| z f_{k,n} \|_{\omega}^2}= 1 + \frac{\omega_{k+1}-4 \omega_{n+k+1}}{\omega_{k+1}+ 4\sum_{t=1}^{n} \omega_{t+k+1}}.
\]
The denominator on the right-hand side is a positive number. By the assumption in (A), the numerator is also positive, and hence $\mcu_{\omega}>1$.
\end{proof}

The remaining question of existence in Theorem \ref{dalpha} will require us to recast our extremal problem (\ref{extremal}) in terms of the Maclaurin coefficients $\{a_n\}_{n=0}^{\infty}$ of the function $f$.  Then, given $f$, we have
\begin{align}\label{fq}
\frac{\langle f,zf\rangle_\omega}{\|zf\|_\omega^2}=\frac{\sum_{n=0}^{\infty}\overline{a}_na_{n+1}\omega_{n+1}}{\sum_{n=0}^{\infty}|a_n|^2\omega_{n+1}}.
\end{align}
From this formula, it is easy to see that in solving the extremal problem (\ref{extremal}), it suffices to consider only functions $f$ with non-negative Maclaurin coefficients. Now we will show that in solving the extremal problem (\ref{extremal}), it suffices to only consider polynomials.  More precisely, let us define
\[
\mcu_{\omega,N}:=\sup\left\{\frac{\sum_{n=0}^{N-1}a_na_{n+1}\omega_{n+1}}{\sum_{n=0}^{N}a_n^2\omega_{n+1}}:a_j\in\bbR\mbox{ for } j=0\hdots, N\right\},
\]
where we interpret the quotient as $0$ if each $a_j=0$ for $j=0,\ldots,N$.  Notice that if we replace each $a_j$ in (\ref{fq}) by $|a_j|$, then the quotient does not decrease in absolute value.  Therefore, we could also write
\[
\mcu_{\omega,N}=\sup\left\{\frac{\left|\sum_{n=0}^{N-1}\bar{a}_na_{n+1}\omega_{n+1}\right|}{\sum_{n=0}^{N}|a_n|^2\omega_{n+1}}:(a_j)_{j=0}^N\in\bbC^{N+1}\right\},
\]
which more closely resembles \eqref{extremal}.

We have the following result:

\begin{theorem}\label{finlim}
The supremum defining $\mcu_{\omega,N}$ is a maximum and it holds that
\[
\mcu_\omega=\lim_{N\rightarrow\infty}\mcu_{\omega,N}.
\]
\end{theorem}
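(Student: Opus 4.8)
The plan is to prove the two assertions separately: first that each $\mcu_{\omega,N}$ is attained, and then that the sequence $\{\mcu_{\omega,N}\}_N$ increases to $\mcu_\omega$. For attainment I would argue by compactness. The quotient defining $\mcu_{\omega,N}$ is homogeneous of degree zero in the vector $(a_0,\dots,a_N)$, so its supremum is unchanged if we restrict to
\[
S_N=\Big\{(a_0,\dots,a_N)\in\bbR^{N+1}:\sum_{n=0}^N a_n^2\omega_{n+1}=1\Big\},
\]
which is a compact subset of $\bbR^{N+1}$ (an ellipsoid, since every $\omega_{n+1}>0$). On $S_N$ the denominator equals $1$ and the numerator $\sum_{n=0}^{N-1}a_na_{n+1}\omega_{n+1}$ is continuous, hence attains its maximum; one checks that this maximum is nonnegative by testing a vector with $a_n\ge0$. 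Equivalently, the quotient is the Rayleigh quotient of a symmetric tridiagonal matrix relative to the positive-definite diagonal form $\sum a_n^2\omega_{n+1}$, so $\mcu_{\omega,N}$ is its largest generalized eigenvalue and is attained at an eigenvector. Either way, the supremum defining $\mcu_{\omega,N}$ is a maximum.

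For the limit I would first record monotonicity: given an optimal vector for $\mcu_{\omega,N}$, appending $a_{N+1}=0$ alters neither the numerator nor the denominator of the quotient for $\mcu_{\omega,N+1}$, so $\mcu_{\omega,N+1}\ge\mcu_{\omega,N}$ and $L:=\lim_{N\to\infty}\mcu_{\omega,N}$ exists. The inequality $L\le\mcu_\omega$ is then immediate: any admissible vector $(a_0,\dots,a_N)$ is the coefficient sequence of a polynomial $f\in H^2_\omega$, and by \eqref{fq} (with the series truncating) the corresponding quotient equals $\langle f,zf\rangle_\omega/\|zf\|_\omega^2$; by the observation following \eqref{fq} we may take $a_n\ge0$, so this quotient is nonnegative and bounded above by $\mcu_\omega$. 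Hence $\mcu_{\omega,N}\le\mcu_\omega$ for every $N$, and $L\le\mcu_\omega<\infty$.

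The substantive direction is $\mcu_\omega\le L$, and this is where I expect the only real work. Fix $f\in H^2_\omega\setminus\{0\}$ with coefficients $(a_n)$; again we may assume $a_n\ge0$. Let $f_N(z)=\sum_{n=0}^N a_nz^n$, so the quotient attached to $(a_0,\dots,a_N)$ satisfies
\[
\frac{\sum_{n=0}^{N-1}a_na_{n+1}\omega_{n+1}}{\sum_{n=0}^{N}a_n^2\omega_{n+1}}\le\mcu_{\omega,N}\le L.
\]
I would then show the left-hand side converges to $\langle f,zf\rangle_\omega/\|zf\|_\omega^2$ as $N\to\infty$. Condition \eqref{etacond} makes multiplication by $z$ bounded on $H^2_\omega$, so $zf\in H^2_\omega$; thus $\sum_n a_n^2\omega_{n+1}=\|zf\|_\omega^2<\infty$ and $\sum_n a_{n+1}^2\omega_{n+1}=\sum_{m\ge1}a_m^2\omega_m\le\|f\|_\omega^2<\infty$. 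The denominators therefore converge to $\|zf\|_\omega^2>0$, and Cauchy--Schwarz controls the tail of the cross sum via
\[
\sum_{n=N}^{\infty}a_na_{n+1}\omega_{n+1}\le\Big(\sum_{n=N}^{\infty}a_n^2\omega_{n+1}\Big)^{1/2}\Big(\sum_{n=N}^{\infty}a_{n+1}^2\omega_{n+1}\Big)^{1/2}\longrightarrow0,
\]
so the numerators converge to $\sum_n a_na_{n+1}\omega_{n+1}=\langle f,zf\rangle_\omega$. Passing to the limit in the displayed inequality gives $\langle f,zf\rangle_\omega/\|zf\|_\omega^2\le L$, and taking the supremum over all such $f$ yields $\mcu_\omega\le L$. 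The main obstacle is precisely this tail estimate: convergence of the cross term $\sum a_na_{n+1}\omega_{n+1}$ is not automatic from $f\in H^2_\omega$ alone, but becomes available once \eqref{etacond} guarantees $zf\in H^2_\omega$, which is the one place the hypothesis on $\omega$ is genuinely used.
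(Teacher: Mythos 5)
Your proof is correct and takes essentially the same route as the paper: attainment of $\mcu_{\omega,N}$ via compactness of a normalized coefficient set (the paper uses $\max_j|a_j|=1$, you use the ellipsoid $\sum_n a_n^2\omega_{n+1}=1$, with the equivalent generalized-eigenvalue reading), and the limit via truncation, monotonicity, and convergence of the Rayleigh-type quotients. The only difference is that where the paper simply invokes density of polynomials to conclude $\frac{\langle T_N(f),zT_N(f)\rangle_\omega}{\|zT_N(f)\|_\omega^2}\to\frac{\langle f,zf\rangle_\omega}{\|zf\|_\omega^2}$, you supply the explicit Cauchy--Schwarz tail estimate (using that \eqref{etacond} makes the shift bounded, so $zf\in H^2_\omega$), which fills in that step in detail.
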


\begin{proof}
First let us prove the limit relation.  It is clear that
\[
\lim_{N\rightarrow\infty}\mcu_{\omega,N}=\sup_N\,\mcu_{\omega,N}
\]
and $\mcu_\omega \geq\sup_N\mcu_{\omega,N}$, so we immediately conclude that
\[
\mcu_\omega \geq\lim_{N\rightarrow\infty}\mcu_{\omega,N}.
\]

For the reverse inequality, let $\varepsilon>0$ be fixed.  Let $f\in H^2_{\omega}$ with non-negative Maclaurin coefficients satisfy
\[
\frac{| \langle f, zf \rangle_\omega |}{\| z f
\|_\omega^2}\geq\mcu_\omega-\varepsilon.
\]
The density of polynomials in the space $H^2_{\omega}$ tells us that
\[
\lim_{N\rightarrow\infty}\frac{\langle T_N(f), zT_N(f)
\rangle_\omega}{\| z T_N(f) \|_\omega^2}=\frac{\langle f, zf
\rangle_\omega}{\| z f \|_\omega^2}.
\]
Therefore, by choosing $N$ large enough, we have
\[
\mcu_{\omega,N}\geq\frac{\langle T_N(f), zT_N(f)
\rangle_\omega}{\| z T_N(f)
\|_\omega^2}\geq\mcu_\omega-2\varepsilon.
\]
Since $\varepsilon>0$ was arbitrary, the desired conclusion follows.

To show that the supremum is a maximum, notice that the fraction
\begin{align}\label{thetadef}
\Theta(a_0,\ldots,a_N):=\frac{\sum_{n=0}^{N-1}a_na_{n+1}\omega_{n+1}}{\sum_{n=0}^{N}a_n^2\omega_{n+1}}
\end{align}
remains invariant if we multiply each $a_j$ by a real number $t\neq0$.  Therefore, in maximizing this quantity, one need only consider $(N+1)$-tuples $(a_j)_{j=0}^{N}$ for which $\max\{|a_j|\}=1$.  Notice that
\[
\left\{(a_0,\ldots,a_N)\in\bbR^{N+1}:\max\{|a_0|,\ldots,|a_N|\}=1\right\}
\]
is a compact set on which the function $\Theta(a_0,\ldots,a_N)$ is continuous, so a maximum is attained.
\end{proof}

With Theorem \ref{finlim} in hand, we can approach the problem of calculating $\mcu_\omega$ by calculating $\mcu_{\omega,N}$ and sending $N\rightarrow\infty$.  To do so, we need the following lemma.

\begin{lemma}\label{nz}
 The function $\Theta$ (defined in (\ref{thetadef})) attains a maximum on the set
\begin{align}\label{maxset}
\left\{(a_0,\ldots,a_N)\in\bbR^{N+1}:\min\{a_0,\ldots,a_N\}>0\right\}.
\end{align}
\end{lemma}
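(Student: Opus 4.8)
The plan is to take a maximizer supplied by Theorem \ref{finlim} and upgrade it to a strictly positive one by a first-order perturbation argument. First I would record that the maximal value is strictly positive: evaluating $\Theta$ at the all-ones tuple gives
\[
\Theta(1,\ldots,1)=\frac{\sum_{n=0}^{N-1}\omega_{n+1}}{\sum_{n=0}^{N}\omega_{n+1}}>0,
\]
so $\mcu_{\omega,N}>0$. Next, replacing the coordinates of any maximizer by their absolute values does not decrease the numerator (since $|a_n||a_{n+1}|\geq a_na_{n+1}$ and $\omega_{n+1}>0$) and leaves the denominator unchanged; hence there is a maximizer $(a_0,\ldots,a_N)$ with every $a_j\geq 0$, and at such a point the numerator $P:=\sum_{n=0}^{N-1}a_na_{n+1}\omega_{n+1}$ equals $\mcu_{\omega,N}$ times the positive denominator $Q:=\sum_{n=0}^{N}a_n^2\omega_{n+1}$, so $P>0$.

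The core of the argument is to show, by contradiction, that none of these nonnegative coordinates vanishes. Suppose $a_j=0$ for some $j$, and perturb only this coordinate from $0$ to $\varepsilon>0$. Because $a_j=0$, the denominator increases only quadratically, $\Delta Q=\varepsilon^2\omega_{j+1}$, whereas the numerator increases linearly, $\Delta P=\varepsilon\,(a_{j-1}\omega_j+a_{j+1}\omega_{j+1})$, with the convention that the term $a_{j-1}\omega_j$ is dropped when $j=0$ and the term $a_{j+1}\omega_{j+1}$ is dropped when $j=N$. Then
\[
\frac{P+\Delta P}{Q+\Delta Q}-\frac{P}{Q}=\frac{Q\,\Delta P-P\,\Delta Q}{Q(Q+\Delta Q)}=\frac{\varepsilon\,Q\,(a_{j-1}\omega_j+a_{j+1}\omega_{j+1})-\varepsilon^2 P\,\omega_{j+1}}{Q(Q+\Delta Q)}.
\]
If at least one of the neighbors $a_{j-1},a_{j+1}$ is positive, the linear-in-$\varepsilon$ contribution dominates for small $\varepsilon$, making this difference strictly positive and contradicting maximality.

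Consequently, at the maximizer the vanishing of a coordinate $a_j$ forces each available neighbor $a_{j-1}$, $a_{j+1}$ to vanish as well (using nonnegativity and positivity of the weights). Since the indices $\{0,1,\ldots,N\}$ form a connected path, this propagation drives every coordinate to zero as soon as one of them is zero, contradicting $P>0$. Hence $a_j>0$ for all $j$, and the maximizer lies in the set \eqref{maxset}, proving the lemma. The only point requiring genuine care — though it is routine — is the bookkeeping of the boundary terms and the verification that the linear term truly dominates the quadratic one for small $\varepsilon$; this is where I would be most attentive. (An alternative, less elementary route substitutes $b_n=a_n\sqrt{\omega_{n+1}}$ to recast $\Theta$ as the Rayleigh quotient of an irreducible symmetric tridiagonal matrix with positive off-diagonal entries, whose leading eigenvector is strictly positive by Perron--Frobenius theory; but the perturbation argument above is self-contained and avoids invoking spectral theory at this stage.)
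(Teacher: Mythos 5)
Your proposal is correct and follows essentially the same route as the paper: the paper likewise reduces to a nonnegative maximizer (via the absolute-value observation after \eqref{fq}) and then argues that at a point with $a_k^*=0$ and a strictly positive neighbor, the partial derivative of $\Theta$ in the direction $\delta_k$ is strictly positive, contradicting maximality --- exactly your finite-difference computation with $\Delta P$ linear and $\Delta Q$ quadratic in $\varepsilon$. Your explicit treatment of the positivity of the maximal value and the connectivity/propagation step merely fills in details the paper leaves implicit.
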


\begin{proof}
We have already observed that $\Theta$ attains its maximum on the set
\[
\left\{(a_0,\ldots,a_N)\in\bbR^{N+1}:a_j\geq 0,\, j=0,1,\ldots,N\right\}.
\]
To show that we can remove the boundary of this set, suppose for contradiction that $\Theta$ attains its maximum at $(a_j^*)_{j=0}^N$, where each $a_j^*\geq0$.  Suppose that $a_k^*=0$, and either $a_{k-1}^*$ or $a_{k+1}^*$ is strictly positive.  A simple computation shows that the gradient in the direction $\delta_{k}$ (i.e.,~the vector with 1 in the $k$-th position and 0 in the others) is strictly positive, which gives us a contradiction.
\end{proof}

Since (\ref{maxset}) is an open set and we are considering an
extremal problem with only finitely many real variables, we can
easily solve it by use of Lagrange multipliers. Proceeding with this
calculation, we let $Q_N$ be a polynomial that is extremal for the
problem defining $\mcu_{\omega,N}$.  Write
\begin{align}\label{qdef}
Q_N(z)=a_0+a_1z+\cdots+a_{N}z^{N}.
\end{align}
The Lagrange multipliers method applied to the numerator in the extremal problem for $\mcu_{\omega,N}$, treating the denominator as a constraint, yields a real number $\lambda$ such that
\begin{align}
\label{ncond}a_j&=\lambda a_{j-1}-\frac{\omega_{j-1}}{\omega_j}a_{j-2},\qquad\qquad j=1,\ldots,N,\\
\label{lastcond}0&=\lambda a_{N}-\frac{\omega_N}{\omega_{N+1}}a_{N-1},
\end{align}
where we adopt the convention that $a_{-1}=0$. In other words, the coefficients of the polynomial $Q_N$ satisfy a three-term recurrence relation.  It is well-known that such sequences have an intimate connection with the theory of orthogonal polynomials on the real line.  The connection between extremal problems similar to (\ref{extremal}) and the theory of orthogonal polynomials was first observed by McDougall in \cite{McD}, and we will further develop
those ideas in what follows.  Before doing so, we  review some of the relevant facts from the theory of orthogonal polynomials.

\section{Orthogonal Polynomials and Jacobi Matrices}\label{Ortho}

Presented below is a broad overview of some of the basic facts in the theory of orthogonal polynomials.  For further information on this topic, we refer the reader to \cite{Chi,Ibook,Nevai,Rice,Szego} and references therein.

\subsection{Favard's Theorem.}
If $\mu$ is a compactly supported probability measure on the real line with infinitely many points in its support, then we can form the associated sequence of monic orthogonal polynomials $\{P_n(x)\}_{n\geq0}$, where $P_n$ has degree exactly $n$, that is, polynomials that satisfy
\begin{align*}
\int_\mathbb{R} P_n(x) P_m(x) \, d \mu(x) = K_n\delta_{nm}
\end{align*}
for some constant $K_n>0$. These polynomials satisfy a three term recurrence relation, which therefore gives rise to two bounded real sequences $\{c_j\}_{j=1}^{\infty}$ and $\{v_j\}_{j=1}^{\infty}$ that satisfy
\begin{align}\label{recur}
xP_n(x)=P_{n+1}(x)+v_{n+1}P_n(x)+c_n^2P_{n-1}(x)
\end{align}
(see \cite[Theorem 1.2.3]{Rice}).  Favard's Theorem (see, for example, \cite[p.~21]{Chi}) tells us that the converse is also true, namely that given two sequences of bounded real numbers as above, with each $c_j>0$,  there exists a
compactly supported measure $\mu$ on the real line whose corresponding monic orthogonal polynomials satisfy this recursion.  This measure can be realized as the spectral measure of a Jacobi matrix $\mcj$, which is a tri-diagonal self-adjoint matrix formed by placing the sequence $\{v_j\}_{j=1}^{\infty}$ along the main diagonal and the sequence $\{c_j\}_{j=1}^{\infty}$ along the first main sub-diagonal and super-diagonal.

From the monic orthogonal polynomials, one can also form the sequence of orthonormal polynomials, denoted by $\{\varphi_n(x)\}_{n\geq0}$, which are given by
\begin{equation}\label{onpdef}
\varphi_0(x)=1,\qquad\qquad\varphi_n(x)=\frac{P_n(x)}{\prod_{j=1}^nc_j},\qquad n\in\bbN.
\end{equation}
These polynomials satisfy the recursion relation
\begin{align}\label{normalrecur}
x\varphi_n(x)=c_{n+1}\varphi_{n+1}(x)+v_{n+1}\varphi_n(x)+c_n\varphi_{n-1}(x),\qquad
n\geq0,
\end{align}
where we again adopt the notation that $\varphi_{-1}(x)=0$.

Favard's Theorem thus places in one-to-one correspondence four classes of objects:
\begin{itemize}
\item  Probability measures on the real line whose support is compact and infinite;
\item  Sequences of monic orthogonal polynomials satisfying a recursion of the form (\ref{recur}) with bounded recursion coefficients;
\item  Tri-diagonal bounded self-adjoint matrices whose
main diagonal is real and whose off-diagonal is strictly positive;
\item  Pairs of bounded sequences of real numbers
$\{c_j\}_{j=1}^{\infty}$ and $\{v_j\}_{j=1}^{\infty}$, where each
$c_j>0$.
\end{itemize}
Therefore, whenever we talk about an object in one of these classes, we can talk about the corresponding element from a different class.

\subsection{Zeros}\label{zero}
One of the keys to our analysis will be an understanding of the zeros of the orthogonal polynomials $\{P_n(x)\}_{n\geq0}$.  In particular, we recall \cite[Theorem 1.2.6]{Rice}, which tells us that the zeros of the polynomial $P_N(x)$ are precisely the eigenvalues of the upper-left $N\times N$ block of the corresponding Jacobi matrix $\mcj$.  One additional fact we will need is that the zeros of the polynomials $P_n$ and $P_{n+1}$ are real and strictly
interlace in the sense that between any two zeros of $P_{n+1}$ there is a zero of $P_n$, and $P_n$ and $P_{n+1}$ do not share any common zeros (see \cite[Section 1.2]{OPUC1}).

For general orthogonal polynomials corresponding to a measure supported on any compact set in the complex plane, it is known that their zeros lie inside the convex hull of the support of the measure of orthogonality.  This result is known as Fej\'{e}r's Theorem and is a simple consequence of the \textit{extremal property}:
\[
\|P_n\|_{L^2(\mu)}=\inf\|z^n+\mbox{ lower order terms}\,\|_{L^2(\mu)}
\]
(for an elegant proof of Fej\'er's Theorem, see the proof of \cite[Theorem 11.5]{Widom}).  We note that the extremal property is valid in any inner product space where one can form the sequence of monic orthogonal polynomials.

\subsection{Poincar\'e's Theorem}
One of the tools we will use in our analysis is Poincar\'e's Theorem, which we now recall (following the presentation in \cite[Theorem 9.6.2]{OPUC2}). Consider an $n^{th}$ order recursion relation
\[
y_{k+n} = \beta_{k,1}y_{k+n-1}+\hdots + \beta_{k,n} y_{k},
\hspace{.3in} k = 0, 1, 2, \ldots,
\]
with initial conditions $y_0, \hdots, y_{n-1}$. Assume that the leading term $\beta_{k,n}\neq 0$. Further assume the existence of $\lim_{k\to\infty}\beta_{k,l}$ for all $l = 1,\hdots, n$ and call the limit $\beta_l$. Then the \emph{Poincar\'e polynomial} is defined by
\[
p(z)=z^n-\beta_1 z^{n-1} - \hdots - \beta_{n-1}z - \beta_n.
\]

\begin{theorem}[Poincar\'e's Theorem]
If the roots of the Poincar\'e polynomial $p$ are all of distinct magnitude, then for all non-trivial initial conditions $(y_0,
\hdots, y_{n-1}) \neq (0,\hdots,0)$ the limit
$$
\lim_{k\to\infty}\frac{y_{k+1}}{y_k}
$$
exists and is equal to one of the roots of $p$.
\end{theorem}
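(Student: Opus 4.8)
The plan is to recast the scalar $n$-th order recursion as a first-order linear system, reduce to an asymptotically diagonal perturbation of a diagonalizable matrix, and exploit the distinct-modulus hypothesis as the spectral separation that pins down the asymptotics of solutions. First I would set $Y_k=(y_k,y_{k+1},\ldots,y_{k+n-1})^{\mathsf T}$, so the recursion becomes $Y_{k+1}=A_kY_k$, where $A_k$ is the companion-type matrix with $1$'s on the superdiagonal and bottom row $(\beta_{k,n},\beta_{k,n-1},\ldots,\beta_{k,1})$. By hypothesis $A_k\to A$, the companion matrix of the Poincar\'e polynomial $p$, whose eigenvalues are exactly the roots $\lambda_1,\ldots,\lambda_n$ of $p$. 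Since these roots have distinct moduli they are distinct, so $A$ is diagonalizable, $A=SDS^{-1}$ with $D=\mathrm{diag}(\lambda_1,\ldots,\lambda_n)$. I would also record the elementary but crucial fact that each eigenvector of a companion matrix has nonzero first coordinate, being a scalar multiple of $(1,\lambda_j,\ldots,\lambda_j^{n-1})^{\mathsf T}$.

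Changing variables by $Z_k=S^{-1}Y_k$ turns the system into $Z_{k+1}=(D+E_k)Z_k$ with $E_k=S^{-1}(A_k-A)S\to 0$, reducing everything to the asymptotically diagonal case. I would order the moduli as $|\lambda_1|>|\lambda_2|>\cdots>|\lambda_n|>0$ (the strict inequalities are the whole point, and the constant term condition keeps the smallest modulus positive). The heart of the argument is a Perron/Levinson-type asymptotic result: using the spectral gaps, one constructs for each $j$ a solution $Z^{(j)}_k$ of the perturbed system with $Z^{(j)}_k=\lambda_j^k\,(e_j+o(1))$, so that $Y^{(j)}_k=SZ^{(j)}_k$ behaves like $\lambda_j^k$ times the $j$-th eigenvector $v^{(j)}$.

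These $n$ solutions are independent and hence span the solution space, so an arbitrary nontrivial solution is $Y_k=\sum_j d_jY^{(j)}_k$ with not all $d_j$ zero. Letting $j_0$ be the smallest index with $d_{j_0}\neq0$ (hence the surviving root of largest modulus), the term $d_{j_0}Y^{(j_0)}_k$ dominates all others by the strict modulus gap, so $Y_k\sim d_{j_0}\lambda_{j_0}^k v^{(j_0)}$. Reading off the first coordinate and using $v^{(j_0)}_1\neq0$ gives $y_k\sim d_{j_0}\lambda_{j_0}^k v^{(j_0)}_1$, so $y_k$ is eventually nonzero and $y_{k+1}/y_k\to\lambda_{j_0}$, a root of $p$, as claimed.

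The main obstacle is the construction of the solutions $Z^{(j)}_k$ under only the hypothesis $E_k\to0$, rather than the summable perturbation $\sum_k\|E_k\|<\infty$ required by the classical Levinson theorem. The distinct-modulus condition is precisely what rescues this: it produces an exponential dichotomy separating the $j$-th mode from the rest, and $Z^{(j)}$ can be realized as the fixed point of a contraction built from the associated stable and unstable summation operators, choosing a tail index $K$ so large that $\|E_k\|$ is small for $k\ge K$ and the contraction constant drops below $1$. I expect setting up this fixed-point scheme, and verifying that the resulting solutions have the claimed \emph{leading-order} asymptotics rather than merely the correct exponential rate, to be the delicate step; the companion-matrix reduction and the dominant-term bookkeeping are then routine.
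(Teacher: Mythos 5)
First, note that the paper itself gives no proof of this statement: it is quoted as a known result from \cite[Theorem 9.6.2]{OPUC2}, so the relevant comparison is with the classical proof found there. Your reduction to $Z_{k+1}=(D+E_k)Z_k$ with $E_k\to0$ is the standard opening, but the key lemma you propose is false as stated, and this is a genuine gap rather than a delicate technicality. Under the sole hypothesis $E_k\to 0$ there need not exist solutions with the exact leading-order asymptotics $Z^{(j)}_k=\lambda_j^k\left(e_j+o(1)\right)$. Already the scalar case shows this: for $y_{k+1}=(1+\tfrac1k)y_k$ one has $y_k\sim Ck$, which is not $\lambda^k(c+o(1))$ for $\lambda=1$; the same obstruction persists in the vector setting whenever $\sum_k\|E_k\|=\infty$. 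This is precisely the dividing line between Levinson's theorem (which needs the $\ell^1$ condition you mention and delivers exact asymptotics) and Perron-type theorems (which hold under $E_k\to0$ plus the modulus gaps but deliver only ratio and direction asymptotics). Your contraction scheme with a tail index $K$ where $\|E_k\|$ is small genuinely produces solutions lying in invariant cones about $e_j$, but what it yields is $Z^{(j)}_k/\|Z^{(j)}_k\|\to e_j$ together with $\|Z^{(j)}_{k+1}\|/\|Z^{(j)}_k\|\to|\lambda_j|$ (indeed, componentwise ratio $\to\lambda_j$), not the asserted $o(1)$ correction to $\lambda_j^k e_j$. A second, smaller slip: the hypothesis $\beta_{k,n}\neq0$ for each $k$ does not force the limit $\beta_n\neq0$, so ``the constant term condition keeps the smallest modulus positive'' is unjustified; if $\beta_n=0$ then $0$ is a root of $p$ and the bottom mode must be treated separately (this is harmless in the paper's application, where the Poincar\'e polynomial is $z^2-tz+1$).

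The proof is repairable along your lines, because Poincar\'e's conclusion concerns only the ratio $y_{k+1}/y_k$, which is exactly the information the Perron-type lemma retains: writing $Y^{(j)}_k=\rho^{(j)}_k\left(v^{(j)}+o(1)\right)$ with $\rho^{(j)}_{k+1}/\rho^{(j)}_k\to\lambda_j$, your dominant-term bookkeeping goes through verbatim (dominance of the $j_0$ mode follows from $\limsup$ of the ratio quotients being at most $(|\lambda_j|+\delta)/(|\lambda_{j_0}|-\delta)<1$), and the nonvanishing first coordinate of companion-matrix eigenvectors finishes the argument as you say; the standing assumption $\beta_{k,n}\neq0$ makes each $A_k$ invertible, so solutions constructed from index $K$ onward extend to a full fundamental system. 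It is worth knowing that the classical proof, as in the source the paper cites, avoids constructing a special fundamental system altogether: one works directly with an arbitrary nontrivial solution in the diagonalizing coordinates and shows that the index of the dominant component eventually stabilizes, after which the ratio converges to the corresponding eigenvalue. That route is more elementary (no fixed-point machinery) and sidesteps the Levinson--Perron issue entirely, at the cost of not producing the basis of special solutions that Perron's theorem gives.
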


\subsection{Regularity}\label{reg}
As already noted, the asymptotic behavior of the zeros of optimal polynomial approximants is closely related to the asymptotic behavior of orthogonal polynomials.  However, to deduce any meaningful results about orthogonal polynomials, one needs some assumptions about the underlying inner product space.  In \cite{StaTo}, Stahl and Totik developed the notion of \textit{regularity} of a measure, which is among the weakest assumptions one can make to deduce any weak asymptotics of the orthogonal polynomials. Regular measures are difficult to
characterize in general.%, but it is known that if $\mu$ is absolutely continuous with respect to area measure on the unit disk and has non-vanishing weight in an inner neighborhood of the unit circle, then $\mu$ is regular.

The notion of regularity extends to arbitrary compactly supported measures in the complex plane, but for our purposes, it suffices to consider only measures on the closed disk of radius $r>0$ and center $0$ (see Chapters 3 and 4, in particular Corollary 4.1.7 and Section 4.2 of \cite{StaTo}). In this case, the regularity of the measure (on the closed disk of radius $r$ and center $0$) can be characterized in terms of the corresponding sequence of orthonormal polynomials $\{\varphi_n\}_{n\geq0}$ by the following equivalent properties:

\begin{itemize}
\item  The leading coefficient $\kappa_n$ of $\varphi_n$ satisfies
\begin{align*}%\label{kcond}
\lim_{\nri}\kappa_n^{1/n}=\frac{1}{r}.
\end{align*}
\item We have
\[
\lim_{\nri}|\varphi_n(z)|^{1/n}=\frac{|z|}{r},\qquad\qquad|z|>r,
\]
and the convergence is uniform on compact subsets.
\end{itemize}

\noindent\textit{Remark.} Regularity also implies that the normalized zero-counting measure for the polynomial $\varphi_n$ (call it $\nu_n$) satisfies
\[
\lim_{\nri}\int_{\overline{\bbD}}\log|z-w|d\nu_n(w)=\log|z|,\qquad\qquad|z|>r,
\]
and the convergence is uniform on compact subsets.  This condition can also be restated as saying that if $\hat{\nu}_n$ is the balayage of $\nu_n$ onto the circle or radius $r$, then the unique weak* limit of the measures $\{\hat{\nu}_n\}_{n\in\bbN}$ is the logarithmic equilibrium measure of the closed disk of radius $r$ centered at $0$.

\bigskip

In the remaining sections, we will use the above tools to solve the extremal problem \eqref{extremal} and deduce some additional information about zeros of optimal polynomial approximants.

\section{The Extremal Problem and Jacobi matrices}\label{solve}

Now we return to the calculation of the extremal quantity $\mcu_\omega$.  Let $\omega=\{\omega_n\}_{n\geq0}$ be a sequence of positive real numbers satisfying (\ref{etacond}).  Let $\mcj_\omega$ be the Jacobi matrix given by
\begin{align}\label{jsdef}
(\mcj_\omega)_{i,j}=\sqrt{\frac{\omega_j}{\omega_{j+1}}}\quad\mbox{
if }\quad |i-j|=1
\end{align}
and setting all other entries equal to zero. In attempting to
calculate $\mcu_{\omega,N}$, we have already observed in Lemma
\ref{nz} that we may restrict our attention to polynomials with
strictly positive coefficients, and due to the scale invariance of
the functional $\Theta$, we may in fact assume that the constant
term is equal to $1$.  This leads us to the following result.

\begin{prop}\label{extremen}
Let $Q_N(z)=a_0+a_1z+\cdots+a_{N}z^{N}$ be an extremal polynomial for the problem defining $\mcu_{\omega,N}$, having all positive coefficients and satisfying $Q_N(0)=1$. Let $\mcj_\omega$ be the infinite Jacobi matrix defined as in (\ref{jsdef}) and let $\{P_n\}_{n\geq0}$ be the sequence of monic polynomials corresponding to $\mcj_\omega$ via Favard's Theorem.  The coefficients $\{a_j\}_{j=0}^N$ of $Q_N$ satisfy
\[
a_j=P_j(\lambda),\qquad\qquad j=0,1,\ldots,N,
\]
where $\lambda=\max\{x:P_{N+1}(x)=0\}$.
\end{prop}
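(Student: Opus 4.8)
The plan is to read off the structure of any extremal polynomial directly from the Lagrange conditions (\ref{ncond}) and (\ref{lastcond}), which by Lemma \ref{nz} are available at a maximizer having strictly positive coefficients, and then to recognize those conditions as statements about the monic orthogonal polynomials of $\mcj_\omega$. Since the diagonal of $\mcj_\omega$ vanishes and its off-diagonal entries are $\sqrt{\omega_m/\omega_{m+1}}$ by (\ref{jsdef}), the recurrence (\ref{recur}) for the associated monic polynomials specializes to $P_j(x) = x P_{j-1}(x) - (\omega_{j-1}/\omega_j) P_{j-2}(x)$. Evaluating at $x=\lambda$ reproduces (\ref{ncond}) verbatim, and since $a_0 = 1 = P_0(\lambda)$ and $a_1 = \lambda = P_1(\lambda)$, a one-line induction yields $a_j = P_j(\lambda)$ for $0 \le j \le N$. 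This part is routine bookkeeping.

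The substance is to determine which $\lambda$ arises. I would first observe that the terminal condition (\ref{lastcond}), namely $\lambda a_N - (\omega_N/\omega_{N+1})a_{N-1} = 0$, becomes after the substitution $a_j = P_j(\lambda)$ exactly the equation $P_{N+1}(\lambda) = 0$; thus $\lambda$ is one of the $N+1$ real, simple zeros of $P_{N+1}$, equivalently an eigenvalue of the top-left $(N+1)\times(N+1)$ block of $\mcj_\omega$. To see \emph{which} zero, I would compute the extremal value in terms of $\lambda$: rewriting the recurrence as $\omega_{j+1}a_{j+1} + \omega_j a_{j-1} = \lambda\,\omega_{j+1}a_j$ (valid for $0 \le j \le N$ with the conventions $a_{-1} = a_{N+1} = 0$ dictated by (\ref{ncond}) and (\ref{lastcond})), multiplying by $a_j$, and summing over $j$, the two resulting cross-term sums coincide and give $2\sum_{n=0}^{N-1}a_n a_{n+1}\omega_{n+1} = \lambda\sum_{n=0}^N a_n^2\omega_{n+1}$. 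In the notation (\ref{thetadef}) this says $\Theta(a_0,\ldots,a_N) = \lambda/2$, so maximizing $\Theta$ is equivalent to selecting the largest feasible $\lambda$.

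The key technical step, and the one I expect to be the main obstacle, is to verify that the largest zero is feasible, i.e. that $\lambda^* := \max\{x : P_{N+1}(x) = 0\}$ produces strictly positive coefficients. Here I would invoke the strict interlacing of zeros recalled in Section \ref{zero}: the largest zero of $P_{N+1}$ exceeds that of $P_N$, which exceeds that of $P_{N-1}$, and so on, so $\lambda^*$ lies strictly to the right of every zero of $P_j$ for $j \le N$. As each $P_j$ is monic with only real zeros, this forces $P_j(\lambda^*) > 0$ for all $0 \le j \le N$; hence $(P_j(\lambda^*))_{j=0}^N$ is an admissible point of the open set (\ref{maxset}), and by the previous paragraph it realizes the value $\lambda^*/2$.

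Finally I would assemble these facts. By Lemma \ref{nz} the supremum is attained at an interior critical point, which by the above satisfies $\Theta = \lambda/2$ with $\lambda$ a zero of $P_{N+1}$, whence $\lambda \le \lambda^*$ and $\mcu_{\omega,N} = \Theta \le \lambda^*/2$. On the other hand the admissible point $(P_j(\lambda^*))_{j=0}^N$ attains $\lambda^*/2$, so in fact $\mcu_{\omega,N} = \lambda^*/2$ and the maximizer must have $\lambda = \lambda^*$. Because $P_{N+1}$ has simple zeros, this identifies $\lambda$ uniquely as $\lambda^*$ and gives $a_j = P_j(\lambda^*)$, as claimed.
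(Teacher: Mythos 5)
Your proposal is correct and takes essentially the same approach as the paper's proof: read off $a_j=P_j(\lambda)$ with $P_{N+1}(\lambda)=0$ from the Lagrange conditions and the three-term recurrence, establish $\Theta=\lambda/2$, and use strict interlacing together with monicity to see that the largest zero of $P_{N+1}$ is the (unique) feasible choice. The only difference is one of detail, not substance: you spell out the summation identity behind $\Theta=\lambda/2$ and the two-sided optimality argument that the paper compresses into ``it is easy to verify'' and ``maximizing this quantity subject to the above constraints.''
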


\noindent\textit{Remark.}  Notice the resemblance of Proposition \ref{extremen} to \cite[Proposition 4.2]{McD}.

\begin{proof}
The polynomials $\{P_n\}_{n=0}^{\infty}$ satisfy the recursion relation \eqref{recur} with $v_n = 0$ and $c_n^2= \frac{\omega_n}{\omega_{n+1}}$.  We have already observed that the coefficients of $Q_N$ must satisfy the relation (\ref{ncond}), where $a_{-1}=0$ and $a_0=1$ (by assumption).  Therefore, $a_j=P_j(\lambda)$ for some real number $\lambda$.  It follows that the condition (\ref{lastcond}) is equivalent to the condition that $0=\lambda P_N(\lambda)-\frac{\omega_N}{\omega_{N+1}}P_{N-1}(\lambda)$.  From the recursion relation \eqref{recur}, this is the same as saying $P_{N+1}(\lambda)=0$.  We have thus determined that for $j=0,\ldots,N$, each $a_j=P_j(\lambda)$ for some real number $\lambda$ for which $P_{N+1}(\lambda)=0$ and $P_j(\lambda)>0$ for $j=0,\ldots,N$.

Since the coefficients $\{a_j\}_{j=0}^N$ satisfy the relations (\ref{ncond}) and (\ref{lastcond}), then it is easy to verify that
\[
\frac{\sum_{n=0}^{N-1}a_na_{n+1}\omega_{n+1}}{\sum_{n=0}^{N}a_n^2\omega_{n+1}}=\frac{\lambda}{2}.
\]
Maximizing this quantity subject to the above constraints shows
\[
\frac{\sum_{n=0}^{N-1}a_na_{n+1}\omega_{n+1}}{\sum_{n=0}^{N}a_n^2\omega_{n+1}}=\frac{1}{2}\max\{x:P_{N+1}(x)=0,\, P_j(x)>0\,\mbox{ for } j=0,\ldots,N\}.
\]
Due to the interlacing property of the zeros of orthogonal polynomials and the fact that $P_j$ is monic for every $j$, we know that $P_j(x)>0$ for all $j\in\{0,\ldots,N\}$ if $x$ is the largest zero of $P_{N+1}$.  Therefore, $\lambda$ must be the largest zero of $P_{N+1}$.
\end{proof}

Since the spectral radius of a bounded self-adjoint matrix is equal to its norm, we deduce (see Section \ref{zero}) that $\mcu_{\omega,N}$ is equal to half the norm of the upper-left $(N+1)\times(N+1)$ block of $\mcj_\omega$.  Taking $N\rightarrow\infty$ and applying Theorem \ref{finlim} and the assumption (\ref{etacond}), we deduce the following theorem.

\begin{theorem}\label{normequ}
Let $\omega:=\{\omega_n\}_{n\geq0}$ be given and satisfy (\ref{etacond}). Let $\mcj_\omega$ be the infinite Jacobi matrix defined as in (\ref{jsdef}) and let $\mcu_\omega$ be given by (\ref{extremal}). Then
\[
\mcu_\omega=\frac{\|\mcj_\omega\|}{2},
\]
where $\|\mcj_\omega\|$ denotes the operator norm of $\mcj_\omega$.
\end{theorem}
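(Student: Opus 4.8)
The plan is to assemble the theorem from three ingredients already in hand: the identification of $\mcu_{\omega,N}$ with the largest zero of $P_{N+1}$ in Proposition \ref{extremen}, the eigenvalue description of those zeros recalled in Section \ref{zero}, and the limit relation $\mcu_\omega=\lim_{N\to\infty}\mcu_{\omega,N}$ from Theorem \ref{finlim}. The only genuinely new work is to pass from finite truncations to the operator norm of $\mcj_\omega$, so I would organize the argument around that passage.

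First I would fix $N$ and write $\mcj_\omega^{(N+1)}$ for the upper-left $(N+1)\times(N+1)$ block of $\mcj_\omega$. By Proposition \ref{extremen}, $\mcu_{\omega,N}=\lambda/2$, where $\lambda$ is the largest zero of the monic orthogonal polynomial $P_{N+1}$; and by the fact recalled in Section \ref{zero}, the zeros of $P_{N+1}$ are exactly the eigenvalues of $\mcj_\omega^{(N+1)}$. The point worth spelling out (which the remark preceding the theorem elides) is why the \emph{largest} eigenvalue equals the \emph{norm}. Since $\mcj_\omega$ has vanishing diagonal ($v_n=0$ for all $n$), conjugation by the orthogonal diagonal matrix $D=\mathrm{diag}(1,-1,1,-1,\ldots)$ multiplies each entry with $|i-j|=1$ by $(-1)^{i+j}=-1$, so $D\mcj_\omega^{(N+1)}D=-\mcj_\omega^{(N+1)}$. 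Hence the spectrum of $\mcj_\omega^{(N+1)}$ is symmetric about the origin, its largest eigenvalue coincides with its spectral radius, and for a real symmetric matrix the spectral radius equals the operator norm. This gives $\mcu_{\omega,N}=\tfrac12\|\mcj_\omega^{(N+1)}\|$.

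It remains to let $N\to\infty$. Condition \eqref{etacond} forces $\sqrt{\omega_n/\omega_{n+1}}\to1$, so the off-diagonal entries of $\mcj_\omega$ are bounded and $\mcj_\omega$ is a bounded self-adjoint operator on $\ell^2$. Writing $\mcj_\omega^{(N+1)}=\Pi_N\mcj_\omega\Pi_N$, where $\Pi_N$ is the orthogonal projection onto the span of the first $N+1$ standard basis vectors, the compressions have norms that are nondecreasing in $N$ and bounded above by $\|\mcj_\omega\|$. For the matching lower bound I would invoke the variational formula $\|\mcj_\omega\|=\sup_{\|v\|=1}|\langle\mcj_\omega v,v\rangle|$ valid for self-adjoint operators: any finitely supported unit vector $v$ satisfies $\langle\mcj_\omega v,v\rangle=\langle\mcj_\omega^{(N+1)}v,v\rangle$ once $N$ contains its support, and such vectors are dense in the unit sphere, so $\sup_N\|\mcj_\omega^{(N+1)}\|=\|\mcj_\omega\|$. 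Therefore $\lim_N\|\mcj_\omega^{(N+1)}\|=\|\mcj_\omega\|$, and combining with Theorem \ref{finlim} yields $\mcu_\omega=\lim_N\mcu_{\omega,N}=\tfrac12\|\mcj_\omega\|$.

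The main obstacle is really a matter of care rather than depth: the symmetry-of-spectrum observation is needed to upgrade ``spectral radius equals norm'' to ``largest eigenvalue equals norm,'' and the convergence $\|\mcj_\omega^{(N+1)}\|\to\|\mcj_\omega\|$ must be justified via the density of finitely supported vectors together with the boundedness guaranteed by \eqref{etacond}. Both are standard, but the boundedness of $\mcj_\omega$ (hence finiteness of $\mcu_\omega$) is precisely where the hypothesis \eqref{etacond} enters, and I would flag that dependence explicitly.
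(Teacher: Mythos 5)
Your proof is correct and takes essentially the same route as the paper's, which likewise combines Proposition \ref{extremen}, the eigenvalue identification from Section \ref{zero}, and the limit $N\to\infty$ via Theorem \ref{finlim}. You additionally spell out two details the paper's one-paragraph argument leaves implicit --- the symmetry of the truncated spectrum (needed to pass from ``spectral radius equals norm'' to ``largest eigenvalue equals norm,'' via $D\mcj_\omega^{(N+1)}D=-\mcj_\omega^{(N+1)}$) and the convergence $\|\mcj_\omega^{(N+1)}\|\to\|\mcj_\omega\|$ through finitely supported trial vectors --- and both are handled correctly.
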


This leads us to the following answer to question (i) from Section
\ref{Intro}.

\begin{corollary}\label{in}
(i) Under the assumptions of Theorem \ref{normequ}, there exists a function $f\in H^2_\omega$ with an optimal polynomial approximant to $1/f$ vanishing inside the open unit disk if and only if $\|\mcj_\omega\| > 2$.  Equivalently, there exist weighted reproducing kernels $k_n(z,0)$ with zeros inside the open unit disk if and only if $\|\mcj_\omega\| > 2.$

(ii) If $\|\mcj_\omega\| = 2,$ then all the zeros of optimal polynomial approximants, or equivalently of weighted reproducing kernels $k_n(z,0)$, lie outside the open unit disk.  Moreover, if the shift operator does not decrease the norm, i.e., if $\|zf\|_{\omega} \geq \|f\|_{\omega}$, then the zeros lie outside the closed unit disk. In particular, this occurs in all $D_{\alpha}$ ($\alpha \geq 0$) spaces or, more generally, in any $H^2_{\omega}$ space where $\inf_n \left( \frac{\omega_{n+1}}{\omega_n} \right) \geq 1$.
\end{corollary}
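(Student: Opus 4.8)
The plan is to read off part (i), and the open-disk half of part (ii), directly from the two facts already in hand: the quantitative solution to question (ii) recorded in Section \ref{s-EP}, namely that the infimum of $|z|$ over all zeros of all optimal approximants equals $1/\mcu_\omega$, together with Theorem \ref{normequ}, which gives $\mcu_\omega=\|\mcj_\omega\|/2$. Thus this infimum equals $2/\|\mcj_\omega\|$. If $\|\mcj_\omega\|>2$ the infimum is strictly below $1$, so by definition of infimum some optimal approximant has a zero of modulus $<1$, i.e.\ inside $\overline{\bbD}$'s interior; conversely a zero inside the open disk forces the infimum below $1$, hence $\|\mcj_\omega\|>2$. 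This is (i), and the phrasing in terms of zeros of $k_n(\cdot,0)$ is immediate from the identity $p_n/\overline{f(0)}=k_n(\cdot,0)$ recalled in the introduction. When $\|\mcj_\omega\|=2$ the infimum equals exactly $1$, so no zero can have modulus $<1$, which is the open-disk statement of (ii).

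The substantive work is the closed-disk refinement: under $\|zf\|_\omega\geq\|f\|_\omega$ I must rule out zeros \emph{on} the unit circle, which the infimum formula alone cannot achieve since it only delivers $|z|\geq1$. Here I would invoke the reduction to first-order approximants by absorbing zeros, already used for question (i). Concretely, if $p_n$ is optimal for $1/f$ (with $f(0)\neq0$) and $p_n(z_0)=0$, then $p_n(0)=\overline{f(0)}\,k_n(0,0)\neq0$ forces $z_0\neq0$, so one may write $p_n=(1-z/z_0)s$ with $s\in\Pol_{n-1}$ and set $g:=sf$. Since $p_nf=(1-z/z_0)g$ is the orthogonal projection of $1$ onto $f\Pol_n$ and $g\Pol_1\subseteq f\Pol_n$, projecting in stages shows that the first-order optimal approximant to $1/g$ is exactly $1-z/z_0$, and hence shares the zero $z_0$. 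It therefore suffices to bound the zeros of first-order approximants.

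For a first-order approximant the zero is $z_1=\|zg\|_\omega^2/\langle g,zg\rangle_\omega$, so $|z_0|=|z_1|=\|zg\|_\omega^2/|\langle g,zg\rangle_\omega|$. Cauchy-Schwarz gives $|z_0|\geq\|zg\|_\omega/\|g\|_\omega$, and this is strict because equality in Cauchy-Schwarz would force $zg=tg$ for a scalar $t$, impossible unless $g\equiv0$; the hypothesis $\|zg\|_\omega\geq\|g\|_\omega$ then yields $|z_0|>1$. I expect the delicate points to be precisely this strictness step, together with the staged-projection identity underpinning the absorbing reduction and the bookkeeping needed to verify $g\not\equiv0$ and $\langle g,zg\rangle_\omega\neq0$, so that the first-order zero is genuinely finite and nonzero.

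Finally, for the ``in particular'' clause I would observe that $\inf_n(\omega_{n+1}/\omega_n)\geq1$ says exactly that $\omega$ is nondecreasing. This on one hand gives the shift hypothesis, since $\|zf\|_\omega^2-\|f\|_\omega^2=\sum_n|a_n|^2(\omega_{n+1}-\omega_n)\geq0$, and on the other hand gives $\mcu_\omega=1$ by Theorem \ref{dalpha}(B), hence $\|\mcj_\omega\|=2$ by Theorem \ref{normequ}. Both hypotheses of the closed-disk statement thus hold, and $D_\alpha$ with $\alpha\geq0$, where $\omega_n=(n+1)^\alpha$ is nondecreasing, is the special case.
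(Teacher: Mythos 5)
Your proof is correct, and its skeleton coincides with the paper's: reduce to first-order approximants by absorbing zeros, then translate between zero moduli and $\mcu_\omega=\|\mcj_\omega\|/2$. Where you differ is in making explicit two steps the paper handles by a one-line assertion or by citation. For (i) and the open-disk half of (ii) you invoke the identity $\inf\{|z|\}=1/\mcu_\omega$ recorded in Section \ref{s-EP} (attributed there to \cite{Seco}), whereas the paper argues through the extremal polynomials $Q_N$ and the identity $\mcu_{\omega,N}=1/|z_{0,N}|$; this is the same content in different packaging. For the closed-disk refinement, the paper simply cites the proof of \cite[Theorem 4.2]{BKLSS}, noting that $\|zf\|_\omega\geq\|f\|_\omega$ is the operative hypothesis there; you instead reconstruct the argument, and your reconstruction is sound: the nested-projection identity $P_V=P_VP_W$ for $V=g\Pol_1\subseteq W=f\Pol_n$ does show that $1-z/z_0$ is the first-order optimal approximant to $1/g$ (this is exactly the paper's one-line remark that each irreducible factor of $p_n$ is itself a first-order optimal approximant), and the strictness of Cauchy--Schwarz --- strict because $zg=tg$ forces $g\equiv0$, while $g=sf\not\equiv0$ --- yields $|z_0|>\|zg\|_\omega/\|g\|_\omega\geq1$. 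The ``bookkeeping'' worry you flag resolves itself: since the first-order approximant to $1/g$ has the finite nonzero root $z_0$, the normal equation gives $\langle g,zg\rangle_\omega=\|zg\|_\omega^2/z_0\neq0$, so the zero formula applies automatically; and your implicit use of $f(0)\neq0$ matches the paper's standing convention (if $f(0)=0$ all optimal approximants vanish identically and the question degenerates). Your final observation that the shift hypothesis already forces $\|\mcj_\omega\|=2$ via Theorem \ref{dalpha}(B) and Theorem \ref{normequ} is correct though not needed, since your Cauchy--Schwarz argument uses only the shift hypothesis; the ``in particular'' clause is handled identically in both proofs.
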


\begin{proof}
First note that, since each irreducible factor $(z-z_0)$ of an optimal approximant $p$ to $1/f$ is an optimal approximant for $(z-z_0)/(pf)$, it suffices to consider zeros of first order optimal approximants.

(i) If $\|\mcj_\omega\| > 2,$ then for all sufficiently large $N$, $\mcu_{\omega,N}=\frac{1}{|z_{0,N}|} > 1$, where $z_{0,N}$ is the zero of the first order optimal approximant of $1/Q_N$ (and hence is a zero of the corresponding weighted reproducing kernel), and therefore $1/Q_N$ has an optimal approximant vanishing inside the disk.  The converse is clear.

(ii) Note that   $\mcu_\omega \leq 1$ if and only if $|z_0| \geq 1,$ where $z_0$ is a zero of any optimal polynomial approximant (or weighted reproducing kernel).  Therefore the first statement in (ii) is immediate.  The fact that the zeros lie outside the closed unit disk in the cases specified follows from the proof of \cite[Theorem 4.2]{BKLSS}, where the sufficient condition in that argument is that $\|zf\|_{\omega} \geq \|f\|_{\omega}$.  Note that if $f(z) =
\sum_{n=0}^{\infty} a_nz^n,$ then
$$
\|zf\|_{\omega}^2 = \sum_{n=0}^{\infty} \omega_{n+1} |a_n|^2 =  \sum_{n=0}^{\infty} \frac{\omega_{n+1}}{\omega_n} \omega_n |a_n|^2 \geq \inf_n \left( \frac{\omega_{n+1}}{\omega_n}\right) \|f\|_{\omega}^2.
$$
The proof is now complete.
\end{proof}

\noindent\textit{Remark.} In Section \ref{exam}, we will give several examples where $\mcu_\omega = 1,$ e.g., all the $D_{\alpha}$ spaces where $\alpha \geq 0$, and examples where $\mcu_\omega > 1,$ e.g., $D_{\alpha}$
spaces where $\alpha < 0$, thus giving particular instances where Corollary \ref{in} applies.

\bigskip

\noindent\textbf{Example.}  Consider the Hardy space $H^2$, where $\omega_n=1$ for all $n\geq0$.  In this case, the Jacobi matrix $\mcj_\omega$ is given by
\[
\mcj_\omega=\mcj_1:=\begin{pmatrix}
0 & 1 & 0 & 0 & \cdots\\
1 & 0 & 1 & 0 & \cdots\\
0 & 1 & 0 & 1 & \cdots\\
0 & 0 & 1 & 0 & \cdots\\
\vdots & \vdots & \vdots & \vdots & \ddots
\end{pmatrix}
\]
and is often called the ``free" Jacobi matrix.  It is well-known that the operator norm of this matrix is $2$.  Indeed, if $\mcl$ denotes the left shift operator and $\mcr$ denotes the right shift operator, then $\mcj_1=\mcl+\mcr$, so clearly $\|\mcj_1\|\leq2$.  Furthermore, by considering trial vectors of the form
\[
\left(\underbrace{\frac{1}{\sqrt{n}},\frac{1}{\sqrt{n}},\frac{1}{\sqrt{n}},\frac{1}{\sqrt{n}},\ldots,\frac{1}{\sqrt{n}}}_{n\textrm{ times}},0,0,0,0,\ldots\right)^\top
\]
one sees at once that $\|\mcj_1\|=2$.  Therefore, in this case $\mcu_\omega=1$ and hence all zeros of optimal polynomial approximants lie outside the open unit disk, which was already shown in \cite{BKLSS} by different methods.  In fact, in \cite{BKLSS}, it was shown that the zeros of optimal approximants lie outside the closed unit disk, which confirms that there is no extremal function in this case, as discussed in Theorem \ref{dalpha}, part (B).

\bigskip

The assumption (\ref{etacond}) on the sequence $\{\omega_n\}_{n\geq0}$ implies that $\mcj_\omega$ is a compact
perturbation of the free Jacobi matrix, and hence the spectrum of $\mcj_\omega$ (denoted $\sigma(\mcj_\omega)$) consists of the interval $[-2,2]$ and possibly a countable number of isolated points in $\bbR\setminus[-2,2]$, which are eigenvalues of $\mcj_\omega$ and whose only possible accumulation points are $\pm2$.  Since the diagonal of $\mcj_\omega$ consists entirely of zeros, it follows from the recursion relation that $P_n(x)$ is an odd function of $x$ when $n$ is odd and an even function of $x$ when $n$ is even.  This implies the zeros of $P_n$ are always symmetric about $0$ and hence $x\in\sigma(\mcj_\omega)$ if and only if $-x\in\sigma(\mcj_\omega)$.  We will use this information to answer questions (iii) and (iv) from Section \ref{Intro}.

Before we do, let us define some notation.  Let $H^2_\omega(\bbR)$ be those functions in $H^2_\omega$ with real Maclaurin coefficients.  For any $f\in H^2_{\omega}(\bbR)\setminus\{0\}$ with Maclaurin coefficients $\{a_n\}_{n=0}^{\infty}$, we define
\[
\Theta\left(\{a_n\}_{n=0}^{\infty}\right):=\frac{\sum_{n=0}^{\infty}a_na_{n+1}\omega_{n+1}}{\sum_{n=0}^{\infty}a_n^2\omega_{n+1}}=\frac{\langle f,zf\rangle_{\omega}}{\|zf\|^2_{\omega}}.
\]
It is easy to see that the Maclaurin coefficients $\{a_n\}_{n=0}^{\infty}$ form a critical point of $\Theta$ precisely when the condition (\ref{ncond}) is satisfied for all $j\in\bbN$ and for some real number $\lambda$.  We have already learned that we can find $\mcu_{\omega}$ by calculating the maximum of $\Theta$, but the following theorem shows that we can in fact deduce much more.

\begin{theorem}\label{geq2}
Let $\omega=\{\omega_n\}_{n\geq0}$ be a sequence of positive real numbers satisfying (\ref{etacond}). Suppose $\mcj_\omega$ defined by (\ref{jsdef}) satisfies $\|\mcj_\omega\|>2$.  The real number $t>2$ is in $\sigma(\mcj_\omega)$ if and only if the function
\[
f(z;t):=\sum_{n=0}^{\infty}P_n(t)z^n
\]
is in $H^2_\omega(\bbR)$, where $\{P_n\}_{n=0}^{\infty}$ is the sequence of monic orthogonal polynomials corresponding to $\mcj_\omega$ via Favard's Theorem.  Furthermore, if $f(z;t)\in H^2_\omega(\bbR)$, then the sequence $\{P_n(t)\}_{n=0}^{\infty}$ is a critical point of $\Theta$ and
\[
\Theta\left(\{P_n(t)\}_{n=0}^{\infty}\right)=\frac{t}{2}.
\]
\end{theorem}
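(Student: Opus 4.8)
The plan is to identify the formal eigenvector of $\mcj_\omega$ at the point $t$ with the sequence of orthonormal polynomials evaluated at $t$, and then to translate the $\ell^2$-summability of this eigenvector into membership of $f(z;t)$ in $H^2_\omega(\bbR)$. Throughout I would use the structural fact recorded just before the theorem: because $\mcj_\omega$ is a compact perturbation of the free Jacobi matrix, the part of $\sigma(\mcj_\omega)$ lying in $(2,\infty)$ consists \emph{only} of eigenvalues. Hence for $t>2$ the condition ``$t\in\sigma(\mcj_\omega)$'' is equivalent to ``$t$ is an eigenvalue of $\mcj_\omega$,'' and the first assertion reduces to showing that $t$ is an eigenvalue precisely when $f(z;t)\in H^2_\omega(\bbR)$.

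First I would set up the eigenvector correspondence. Writing the equation $\mcj_\omega\psi=t\psi$ componentwise and comparing it with the orthonormal recursion \eqref{normalrecur} (in which $v_n\equiv0$), one sees that $\psi_n=\varphi_n(t)$ solves the recursion, the boundary row at $n=0$ being automatically satisfied since $\varphi_{-1}=0$. Because each interior row determines $\psi_{n+1}$ from $\psi_n,\psi_{n-1}$ while the boundary row fixes $\psi_1$ in terms of $\psi_0$, the space of formal solutions is one-dimensional, spanned by $(\varphi_n(t))_{n\geq0}$. Thus $t$ is an eigenvalue of $\mcj_\omega$ if and only if $(\varphi_n(t))_{n\geq0}\in\ell^2$. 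It remains to compare this with $\|f(z;t)\|_\omega^2=\sum_n P_n(t)^2\omega_n$. Using \eqref{onpdef} together with the telescoping identity $\prod_{j=1}^n c_j^2=\omega_1/\omega_{n+1}$ (recall $c_j^2=\omega_j/\omega_{j+1}$), I get $\varphi_n(t)^2=\omega_1^{-1}P_n(t)^2\omega_{n+1}$, so that $\sum_n\varphi_n(t)^2=\omega_1^{-1}\sum_n P_n(t)^2\omega_{n+1}$. Finally, condition \eqref{etacond} forces $\omega_{n+1}/\omega_n\to1$, so the weights $\omega_{n+1}$ and $\omega_n$ are comparable and the series $\sum_n P_n(t)^2\omega_{n+1}$ and $\sum_n P_n(t)^2\omega_n$ converge or diverge together. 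This yields $(\varphi_n(t))\in\ell^2\iff f(z;t)\in H^2_\omega(\bbR)$ (the coefficients $P_n(t)$ being real), completing the first part.

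For the second part, assume $f(z;t)\in H^2_\omega(\bbR)$. The monic recursion \eqref{recur}, with $v_n\equiv0$ and $c_n^2=\omega_n/\omega_{n+1}$, reads $P_j(t)=t\,P_{j-1}(t)-(\omega_{j-1}/\omega_j)P_{j-2}(t)$, which is exactly the critical-point relation \eqref{ncond} with $\lambda=t$; by the criterion stated just before the theorem, $\{P_n(t)\}$ is therefore a critical point of $\Theta$ (the functional being well defined because $f(z;t)\in H^2_\omega(\bbR)$ makes the denominator finite, and nonzero since its $n=0$ term is $\omega_1>0$). To compute the value I would multiply the recursion by $\omega_{n+1}P_n(t)$ and sum over $n\geq0$; writing $S_1=\sum_n P_n(t)P_{n+1}(t)\omega_{n+1}$ and $S_2=\sum_n P_n(t)^2\omega_{n+1}$, a reindexing of the telescoping lower-order term gives $S_1=tS_2-S_1$, whence $\Theta(\{P_n(t)\})=S_1/S_2=t/2$. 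The rearrangements are justified by absolute convergence, which follows from $S_2<\infty$ and the Cauchy--Schwarz bound $\sum_n|P_n(t)P_{n+1}(t)|\omega_{n+1}\le S_2^{1/2}\big(\sum_m P_m(t)^2\omega_m\big)^{1/2}$.

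The main obstacle I anticipate is the clean justification of the step ``$t$ eigenvalue $\iff(\varphi_n(t))\in\ell^2$'': this requires both the one-dimensionality of the formal solution space of the half-line recursion and the discreteness of $\sigma(\mcj_\omega)$ above $2$, the latter being exactly what lets me equate spectral membership with being an eigenvalue. The remaining care — the weight swap $\omega_n\leftrightarrow\omega_{n+1}$ via \eqref{etacond} and the absolute convergence needed for the summation by parts — is routine but should be stated explicitly.
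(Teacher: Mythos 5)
Your proposal is correct, and its key step is genuinely different from the paper's. Both arguments open the same way: for $t>2$, membership in $\sigma(\mcj_\omega)$ amounts to $t$ being an eigenvalue (the spectrum outside $[-2,2]$ is discrete, as the paper notes via the compact-perturbation argument), and the formal solution space of $\mcj_\omega\psi=t\psi$ is one-dimensional with normalized solution $\psi_n=\varphi_n(t)$, so the eigenvalue condition is exactly $(\varphi_n(t))_{n\geq0}\in\ell^2$ (for the ``if'' direction you implicitly use boundedness of $\mcj_\omega$ so that the componentwise identity is the operator identity; worth one sentence). From that point the paper proceeds through Poincar\'e's theorem in \emph{both} directions: from $\ell^2$-membership it gets $\limsup|P_n(t)|^{1/n}\leq 1$, forces the Poincar\'e ratio limit to be the small root $\tfrac{t-\sqrt{t^2-4}}{2}<1$, and concludes exponential decay, hence $f(z;t)\in H^2_\omega(\bbR)$; conversely it runs the same machine starting from analyticity of $f(z;t)$ in $\bbD$. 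You bypass Poincar\'e entirely with the exact telescoping identity $\varphi_n(t)^2=P_n(t)^2\,\omega_{n+1}/\omega_1$ (valid since $c_j^2=\omega_j/\omega_{j+1}$, so $\prod_{j=1}^n c_j^2=\omega_1/\omega_{n+1}$), together with the termwise comparability $\omega_{n+1}\asymp\omega_n$ forced by \eqref{etacond}; this makes the equivalence $(\varphi_n(t))\in\ell^2\iff\sum_n P_n(t)^2\omega_n<\infty$ immediate and symmetric, which is shorter and more elementary than the paper's route. What the paper's Poincar\'e detour buys, and your argument does not, is the quantitative byproduct that the coefficients decay like $\bigl(\tfrac{t-\sqrt{t^2-4}}{2}\bigr)^n$, i.e., that $f(z;t)$ is analytic in a disk of radius strictly larger than $1$ --- a fact the paper explicitly reuses later (Remark (i) after Corollary \ref{ext2}, and the identification of $\sigma(\mcj_\omega)\cap(2,\infty)$ for the Bergman-type spaces in Section \ref{secBerg}), so if your proof were adopted those downstream statements would need a separate argument. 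Your second half --- reading \eqref{recur} with $v_n\equiv 0$ as the critical-point condition \eqref{ncond} with $\lambda=t$, then multiplying by $\omega_{n+1}P_n(t)$, summing, and reindexing to get $S_1=tS_2-S_1$ and hence $\Theta=t/2$, with absolute convergence from Cauchy--Schwarz and $S_2\geq\omega_1>0$ --- is exactly the computation the paper compresses into ``one easily deduces,'' mirroring the finite-degree version in Proposition \ref{extremen}, and your explicit justification of the rearrangement is a small improvement in rigor.
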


\noindent\textit{Remark.}  Notice that Theorem \ref{geq2} has a similar conclusion to that of \cite[Proposition 3.4]{McD}.

\begin{proof}
Suppose $t\in\sigma(\mcj_\omega)\cap(2,\infty)$ so that $t$ is an eigenvalue of $\mcj_\omega$ and let $v:=(v_0,v_1,v_2,\ldots)^\top$ be a corresponding eigenvector whose first non-zero entry is equal to $1$.  Since $ \mcj_\omega v = tv,$ the structure of the matrix $\mcj_\omega$ tells us that the entries $\{v_j\}_{j\geq0}$ must satisfy
\[
tv_j=\sqrt{\frac{\omega_{j+1}}{\omega_{j+2}}}v_{j+1}+\sqrt{\frac{\omega_{j}}{\omega_{j+1}}}v_{j-1},\qquad
j\geq0,
\]
where we set $v_{-1}=0$.  Notice this implies $v_0\neq0$ and hence $v_0=1$.  It follows from (\ref{normalrecur}) by the uniqueness of the solution to the recursion relation that $v_j=\varphi_j(t)$ for all $j = 0,1,2,\ldots$, where $\varphi_j$ is the orthonormal polynomial of degree $j$ corresponding to the matrix $\mcj_\omega$ via \eqref{onpdef}. We conclude that $\{\varphi_n(t)\}_{n=0}^{\infty}\in\ell^2(\bbN_0)$ (this could also be deduced from \cite[Proposition 2.16.2]{Rice}).  Define
\[
a_n=P_n(t),\qquad\qquad n\geq0.
\]
Then $\{a_n\}_{n\geq0}$ satisfies the recursion relation
\[
a_{n+1}=ta_n-\frac{\omega_n}{\omega_{n+1}}a_{n-1},
\]
and we compute the corresponding Poincar\'e polynomial to be $z^2-tz+1$ (since (\ref{etacond}) ensures that the assumptions of Poincar\'e's Theorem are met). Poincar\'e's Theorem implies that \[ \lim_{\nri}\frac{a_{n+1}}{a_{n}} \] exists and must equal one of the roots
\[
\frac{t\pm\sqrt{t^2-4}}{2}.
\]
However, $\{\varphi_n(t)\}_{n=0}^{\infty}\in\ell^2(\bbN_0)$, and therefore it follows that
\[
\limsup_{\nri}|a_n|^{1/n}=\limsup_{\nri}|\varphi_n(t)|^{1/n}\leq1.
\]
We conclude that
\[
\lim_{\nri}\frac{a_{n+1}}{a_{n}}=\frac{t-\sqrt{t^2-4}}{2}<1.
\]
In other words, the sequence $\{a_n\}_{n\geq0}$ decays exponentially, and so the function $f(z;t)$ defined in the statement of the theorem is in fact in the space $H^2_{\omega}(\bbR)$. From the recursion relation satisfied by the sequence $\{a_n\}_{n\geq0}$, one easily deduces that $\{P_n(t)\}_{n=0}^{\infty}$ is a critical point of $\Theta$ and
\[
\frac{\langle f(z;t),zf(z;t)\rangle_\omega}{\|zf(z;t)\|_\omega^2}=\frac{t}{2}.
\]

Conversely, suppose $f(z;t)\in H^2_\omega(\bbR)$, where $t>2$.  It is clear that the Maclaurin coefficients of $f(z;t)$ satisfy the recursion (\ref{ncond}) with $\lambda=t$.  Since $f(z;t)$ is holomorphic in the unit disk and the sequence $\omega$ satisfies (\ref{etacond}), we may apply Poincar\'{e}'s Theorem as above to deduce that the sequence $\{|P_n(t)|\}_{n=0}^{\infty}$ decays exponentially as $\nri$.  Proceeding as above, we conclude that the sequence $\{|\varphi_n(t)|\}_{n=0}^{\infty}$ also decays exponentially and hence lies in $\ell^2(\bbN_0)$, which means $(\varphi_0(t),\varphi_1(t),\ldots)^\top$ is an eigenvector for $\mcj_\omega$ with eigenvalue $t$.
\end{proof}

By combining Theorems \ref{normequ} and \ref{geq2}, we obtain our next result.

\begin{corollary}\label{ext2}
If $\|\mcj_\omega\|>2$, then the supremum defining $\mcu_\omega$ in (\ref{extremal}) is attained and the function
\[
f^*(z)=\sum_{n=0}^{\infty}P_n(\|\mcj_\omega\|)z^n
\]
is extremal.
\end{corollary}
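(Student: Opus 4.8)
The plan is to combine the norm identity of Theorem \ref{normequ} with the spectral characterization of Theorem \ref{geq2}; the only genuine content is to check that the candidate function $f^*$ is well-defined, i.e.\ that the value $t=\|\mcj_\omega\|$ at which we evaluate the monic polynomials actually lies in $\sigma(\mcj_\omega)$ and exceeds $2$, so that Theorem \ref{geq2} applies.

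First I would record that $\mcj_\omega$ is a bounded self-adjoint operator, so its operator norm equals its spectral radius, $\|\mcj_\omega\|=\sup\{|t|:t\in\sigma(\mcj_\omega)\}$. Since $\sigma(\mcj_\omega)$ is compact this supremum is attained, so either $\|\mcj_\omega\|$ or $-\|\mcj_\omega\|$ belongs to $\sigma(\mcj_\omega)$. As noted in the discussion preceding Theorem \ref{geq2}, the vanishing of the diagonal of $\mcj_\omega$ forces $P_n$ to have definite parity, whence the spectrum is symmetric about the origin; consequently $t:=\|\mcj_\omega\|\in\sigma(\mcj_\omega)$. The hypothesis $\|\mcj_\omega\|>2$ then gives $t>2$, placing us exactly in the setting of Theorem \ref{geq2}.

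Next I would invoke Theorem \ref{geq2} with this $t$. Because $t>2$ and $t\in\sigma(\mcj_\omega)$, the theorem guarantees that $f^*(z)=f(z;t)=\sum_{n\geq0}P_n(t)z^n$ lies in $H^2_\omega(\bbR)$, that $\{P_n(t)\}_{n\geq0}$ is a critical point of $\Theta$, and that
\[
\frac{\langle f^*,zf^*\rangle_\omega}{\|zf^*\|_\omega^2}=\Theta\left(\{P_n(t)\}_{n=0}^{\infty}\right)=\frac{t}{2}.
\]
By Theorem \ref{normequ} we have $\mcu_\omega=\|\mcj_\omega\|/2=t/2$, so this ratio equals $\mcu_\omega$. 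Since $t/2>0$, the absolute value in the definition \eqref{extremal} is harmless, and therefore
\[
\frac{|\langle f^*,zf^*\rangle_\omega|}{\|zf^*\|_\omega^2}=\mcu_\omega.
\]
Thus the supremum defining $\mcu_\omega$ is attained at $f^*$, which is precisely the assertion.

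The one step carrying any subtlety is verifying that the \emph{positive} endpoint $\|\mcj_\omega\|$, and not merely $-\|\mcj_\omega\|$, belongs to the spectrum: this is where the parity/symmetry remark is essential, since Theorem \ref{geq2} is stated only for $t>2$. Everything else reduces to a direct substitution of $t=\|\mcj_\omega\|$ into the two cited theorems, so I anticipate no real obstacle beyond correctly assembling these ingredients.
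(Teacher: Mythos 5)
Your proposal is correct and follows essentially the same route the paper intends: Corollary \ref{ext2} is obtained there precisely by combining Theorem \ref{normequ} with Theorem \ref{geq2}, and the spectral symmetry you use to place $+\|\mcj_\omega\|$ (not just $-\|\mcj_\omega\|$) in $\sigma(\mcj_\omega)$ is exactly the parity observation the paper records in the paragraph preceding Theorem \ref{geq2}. You have merely written out explicitly the one-line combination the paper leaves to the reader, including the correct handling of the self-adjointness (norm equals spectral radius) and the positivity of $t/2$ that makes the absolute value in \eqref{extremal} harmless.
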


\noindent\textit{Remarks.}  i) The proof of Theorem \ref{geq2} shows that the extremal function $f^*$ has a Maclaurin series that converges in a disk of radius strictly larger than $1$, i.e., $f^*$ is analytic across the unit circle.

ii)  Note that Corollary \ref{ext2} resolves the question of existence of extremal function in Theorem \ref{dalpha}, part (A), thus completing the proof of that theorem.

iii)  Applying similar reasoning as in \cite[Proposition 3.1]{McD} and in \cite[Proposition 4.2]{McD}, we see at once that any extremal function for $\mcu_\omega$ must be of the form $Cf^*(\eitheta z)$ for some $\theta\in\bbR$ and $C\in\bbC\setminus\{0\}$.

\smallskip

If $\|\mcj_\omega\|=2$, then determining whether the supremum in (\ref{extremal}) is a maximum is not easy.  Indeed, the supremum in (\ref{extremal}) is a maximum if and only if
\[
\sum_{n=0}^{\infty}\omega_n|P_n(2)|^2<\infty
\]
(see \cite[Proposition 4.2]{McD} where a similar problem is treated).  We have already seen an example where the supremum is not attained. In general it is difficult to resolve the question of existence of an extremal function.

\smallskip

In the next section, we will use Corollary \ref{ext2} to establish the existence of an extremal function for \eqref{extremal} in the $D_{\alpha}$ ($\alpha < 0$) and $\mathcal{A}^2_{\beta}$ ($\beta>-1$) spaces.

%%%%%%%%%%%%%%%%%%%%%%%%%%%%%%%%%%%%%%%%%
\section{Solving the extremal problem in Dirichlet and Bergman-type spaces}\label{exam}

In this section we will compute the norms of the Jacobi matrices
$\mcj_\omega$ (defined as in (\ref{jsdef}), with $\omega$ satisfying
\eqref{etacond}) for Bergman-type $\mathcal{A}^2_{\beta}$
($\beta>-1$) and Dirichlet-type $D_{\alpha}$ ($\alpha < 0$) spaces.
We solve the problem explicitly in $\mathcal{A}^2_{\beta}$
($\beta>-1$), thus yielding a complete description of the extremal
functions and the minimal modulus of the zeros of optimal
approximants and related weighted reproducing kernels. In
Dirichlet-type spaces $D_{\alpha}$ for negative $\alpha$, we improve
the best known bounds on these norms and establish some numerical
estimates.  In both cases, we will be able to write down a
differential equation that the extremal functions must satisfy.
Observe that the space $D_{\alpha}$ (for $\alpha <0$) is the same
space of functions as $\mathcal{A}^2_{-(\alpha+1)}$ but equipped
with a different yet equivalent norm. Accordingly, in the classical
Bergman space $\mathcal{A}^2_{0} = D_{-1}$, where the two norms
coincide, we solve the problem explicitly obtaining a (rational)
extremal function.

We want to emphasize that the solution to the extremal problem
(\ref{extremal}) is highly dependent on the sequence $\omega$, which
determines the geometry of the space, rather than the underlying
space itself (i.e., the topology of the space). This is already
clear from Theorem \ref{dalpha}: given any space $H^2_{\omega}$ with
a nondecreasing $\omega$, we can modify the norm of the space so
that the extremal problem goes from not attaining its supremum
(which is 1) to attaining a maximum larger than 1 by just changing
the value of $\omega_2$ to be $\omega_1/5$. This sensitive
dependence of the extremal problems on the sequence $\omega$ also
extends to the numerical value of the minimal modulus of the zeros,
and accordingly, zeros of weighted reproducing kernels.

Let us begin by examining some functional equations that the extremal functions must satisfy.

%%%%%%%%%%%%%%%%%%%%%%%%%%%%%%%%%%%%%%%%%
\subsection{Functional equations}\label{functionalequations}

Consider now a fixed sequence $\omega$ and the functions
$\{f(z;t)\}$ given in Theorem \ref{geq2}, where $t$ runs over all of
$\sigma(\mcj_\omega)\cap(2,\infty)$, which we will assume is not
empty.  By Theorem \ref{geq2}, we know that the Maclaurin
coefficients $\{a_k(t)\}_{k=0}^{\infty}$ of $f(z;t)$ satisfy the
recursion relation \eqref{ncond} for all $j \geq 2$, where
$\lambda=t=a_1$ and $a_0=1$.

Multiplying the equation \eqref{ncond} by $z^j$ and summing from $j=2$ to $\infty$ yields
\begin{equation}\label{eqsum}
\sum_{j=2}^{\infty} a_j z^j =t z \sum_{j=2}^{\infty} a_{j-1}z^{j-1} - z^2 \sum_{j=2}^{\infty} \frac{\omega_{j-1}}{\omega_j}a_{j-2}z^{j-2}.
\end{equation}

Define an operator $\mathcal{T}_{\omega}$ acting on $g(z)=\sum_{j=0}^{\infty} c_j z^j$ by
\[
\mathcal{T}_{\omega}(g)(z) = \sum_{j=0}^{\infty}\frac{\omega_{j+1}-\omega_{j+2}}{\omega_{j+2}} c_j z^j.
\]
Notice that $\mathcal{T}_{\omega}$ is bounded and injective on $H^2_{\omega}$ whenever $\omega$ is strictly decreasing.

Thus, equation \eqref{eqsum} becomes
\begin{equation}\label{functionaleq}
f(z;t)(z^2 - t z +1) = 1-z^2 \mathcal{T}_{\omega}(f(z;t))(z).
\end{equation}
Let $t_-= \frac{t - \sqrt{t^2-4}}{2}$ and $t_+= \frac{t + \sqrt{t^2-4}}{2}=1/t_-$ be the two roots of the quadratic polynomial on the left-hand side of \eqref{functionaleq}. When $t\in\sigma(\mcj_\omega)\cap(2,\infty)$, we know $f(z;t)$ and $\mathcal{T}_{\omega}(f(z;t))$ are both analytic in the unit disk, so we must have the following:
\begin{equation}\label{functionallambda}
t_+^2=\mathcal{T}_{\omega}(f(z;t))(t_-).
\end{equation}

One of the main objectives of the present section is to use specific
choices of $\omega$ to solve equation \eqref{functionaleq} and find
an explicit formula for $f(z;t)$ for each $t$ in
$\sigma(\mcj_\omega)\cap(2,\infty)$. In fact, for
$\mathcal{A}^2_{\beta}$, the operator $\mathcal{T}_{\omega}$ has a
simple form and the resulting equation is equivalent to a first
order differential equation that we can solve explicitly.  For
$D_{\alpha}$ where $-\alpha \in \bbN$, the operator
$\mathcal{T}_{\omega}$ is a combination of shifts and differential
operators and the equation will provide us with an $n$-th order
differential equation that is solved by the extremal function $f^*$.
We can only solve this differential equation explicitly for
$\alpha=-1,$ but nevertheless, it gives us qualitative information
about the extremal function.

%%%%%%%%%%%%%%%%%%%%%%%%%%%%%%%%%%%%%%%%%
\subsection{Bergman-type spaces.}\label{secBerg}

The operator $\mathcal{T}_{\omega}$ for the space
$\mathcal{A}^2_{\beta}$, $\beta > -1$, sends a function
$g(z)=\sum_{j=0}^{\infty} c_j z^j$ to
\[
\mathcal{T}_{\omega}(g)(z)=(\beta+1) \sum_{j=0}^{\infty} \frac{c_j z^j}{j+2}.
\]
Hence, taking derivatives with respect to $z$ on both sides of \eqref{functionaleq} gives (where $f'(z;t)$ denotes the derivative with respect to the variable $z$)
\begin{equation}
f(z;t)(2z-t) + f'(z;t) (1-t z +z^2) = - (\beta+1)zf(z;t),
\end{equation}
or, equivalently, after some simple computation,
\begin{equation}\label{functBergman}
\frac{f'(z;t)}{f(z;t)} = \frac{(t - (3+\beta)z)}{(1-t z+z^2)}
= \frac{-\frac{3+\beta}{2}+\frac{(1+\beta)t}{2\sqrt{t^2-4}}}{z-\frac{t-\sqrt{t^2-4}}{2}}+\frac{-\frac{3+\beta}{2}-\frac{(1+\beta)t}{2\sqrt{t^2-4}}}{z-\frac{t+\sqrt{t^2-4}}{2}}.
\end{equation}
To find an explicit formula for $f(z;t)$, we first need to determine the values of $t\in\sigma(\mcj_\omega)\cap(2,\infty)$.  This collection of values is known (as we will see below), but we will determine this set using \eqref{functBergman}.  To do so, we first notice that for every $t>2$ (even those values that are not in $\sigma(\mcj_\omega)$), the function $f(z;t)$ defined in Theorem \ref{geq2} solves the differential equation \eqref{functBergman} in some neighborhood of zero because we have the relation \eqref{eqsum}.  Next, recall that we learned from the proof of Theorem \ref{geq2} that for every $t>2$, the function $f(z;t)$ is analytic in the disk centered at $0$ and with radius $t_-$ or $t_+$ (defined as in Section \ref{functionalequations}).  Invoking Theorem \ref{geq2} again shows that $\sigma(\mcj_\omega)\cap(2,\infty)$ is precisely those values of $t>2$ for which $f(z;t)$ is analytic in $\bbD$.

If $t>2$, then a solution of \eqref{functBergman} that is analytic in $\mathbb{D}$ must have a non-negative integer residue at $\frac{t-\sqrt{t^2-4}}{2}$. Therefore, if $t\in\sigma(\mcj_\omega)\cap(2,\infty)$ we have
\[
-\frac{3+\beta}{2}+\frac{(1+\beta)t}{2\sqrt{t^2-4}}=m\in \mathbb{N}\cup\{0\}.
\]
Solving for $t$ yields $t = \sqrt{4+\frac{(1+\beta)^2}{m^2+(3+\beta)m+2+\beta}}$ for some $m\in \mathbb{N}\cup\{0\}$.  For each $m\in\bbN\cup\{0\}$, define
\begin{align}\label{lambdam}
t_m:=\sqrt{4+\frac{(1+\beta)^2}{m^2+(3+\beta)m+2+\beta}}=\frac{2m+\beta+3}{\sqrt{(m+1)(m+\beta+2)}}.
\end{align}
We have thus shown that $\sigma(\mcj_\omega)\cap(2,\infty)\subseteq\{t_m\}_{m=0}^{\infty}$.  To show the reverse inclusion, notice that for each $m\in\bbN\cup\{0\}$, the corresponding differential equation \eqref{functBergman} is
\[
\frac{f'(z;t_m)}{f(z;t_m)} =\frac{m}{z-\frac{\sqrt{m+1}}{\sqrt{m+\beta+2}}}-\frac{m+3+\beta}{z-\frac{\sqrt{m+\beta+2}}{\sqrt{m+1}}}.
\]
Therefore $f(z;t_m)$ is analytic in $\bbD$ and equal to
\[
f(z;t_m)= \left(1-\frac{z\sqrt{m+\beta+2}}{\sqrt{m+1}}\right)^m\left(1-\frac{z\sqrt{m+1}}{\sqrt{m+\beta+2}}\right)^{-m-(3+\beta)}.
\]

We can confirm that the set $\{\pm\, t_m\}_{m=0}^{\infty}$ does indeed coincide with the discrete spectrum of $\mcj_\omega$ by recalling the description of $\sigma(\mcj_{\omega})$ provided in \cite[Theorem 5.5.2]{Ibook}.  To apply the result in \cite[Theorem 5.5.2]{Ibook}, we must use our parameter $\beta$ to specify the parameters $a$, $b$ and $\lambda$ that determine the recursion coefficients in \cite[Equation (5.5.4)]{Ibook}. Indeed, we set\footnote{We caution the reader that in applying the results from \cite{Ibook}, all of the matrix entries and eigenvalues must be rescaled by a factor of $2$.}
\[
a\mapsto-\left(\frac{1+\beta}{2}\right)<0,\qquad\qquad b\mapsto0,\qquad\qquad
\lambda\mapsto\frac{3+\beta}{2}>0.
\]
The portion of \cite[Theorem 5.5.2]{Ibook} corresponding to what is there called Region II and $a<-b$ tells us that
\[
\sigma(\mcj_\omega) = [-2,2] \cup  \left\{\pm\sqrt{4+\frac{(1+\beta)^2}{m^2+(3+\beta)m+2+\beta}}\right\}_{m\in\bbN\cup\{0\}}
\]
in agreement with the preceding calculation.

Of particular interest is the function $f^*(z)$ given in Corollary \ref{ext2}, which is extremal for the problem \eqref{extremal}.  The norm $\|\mcj_\omega\|$ equals the maximum eigenvalue, which is attained when $m = 0$, so
\[
\|\mcj_\omega\|=\sqrt{4+\frac{(1+\beta)^2}{2+\beta}}=\frac{\beta+3}{\sqrt{\beta+2}}.
\]
If we substitute $\|\mcj_\omega\|$ for $t$ in \eqref{functBergman},
it is easy to check that the function $f^*$ must be given by
\[
f^*(z) =\left(1-\frac{z}{\sqrt{\beta+2}}\right)^{-(\beta+3)}.
\]
We summarize our findings in the following theorem.

\begin{theorem}\label{Bergmansolution}
Let $\beta > -1$. The minimal value of the modulus of a zero of an optimal approximant in $\mathcal{A}^2_{\beta}$ is $\frac{2\sqrt{\beta+2}}{\beta+3}$ and it is attained by the approximant of degree 1 to $1/f^*$ where
\[
f^*(z) = \left(1-\frac{z}{\sqrt{\beta+2}}\right)^{-(\beta+3)}.
\]
In particular, for the classical Bergman space ($\beta = 0$), the minimal modulus of a zero of an optimal approximant is $\frac{2}{3}\sqrt{2}$ and the extremal function $f^*(z)$ is given by $\frac{2\sqrt{2}}{(\sqrt{2}-z)^3}$.  Furthermore, the set $\sigma(\mcj_\omega)\cap(2,\infty)$ is given by $\{t_m\}_{m=0}^{\infty}$ (defined as in (\ref{lambdam})) and for each $m\in\bbN\cup\{0\}$, the solutions $f(z;t_m)$ of the differential equation \eqref{functBergman}, normalized so that $f(0;t_m)=1$, are given by
\[
f(z;t_m)= \left(1-\frac{z\sqrt{m+\beta+2}}{\sqrt{m+1}}\right)^m\left(1-\frac{z\sqrt{m+1}}{\sqrt{m+\beta+2}}\right)^{-m-(3+\beta)}.
\]
\end{theorem}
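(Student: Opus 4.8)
The plan is to read off the theorem from the norm identity $\mcu_\omega=\|\mcj_\omega\|/2$ of Theorem \ref{normequ}, the spectral characterization in Theorem \ref{geq2}, and an explicit solution of the differential equation \eqref{functBergman}. First I would record the two computations behind \eqref{functBergman}: for the weight $\omega_n=\binom{\beta+n+1}{n}^{-1}$ one has $\omega_{j+1}/\omega_{j+2}=(\beta+j+3)/(j+2)$, whence $(\omega_{j+1}-\omega_{j+2})/\omega_{j+2}=(\beta+1)/(j+2)$ and $\mathcal{T}_\omega$ has the stated form; differentiating the functional equation \eqref{functionaleq} then turns the factor $z^2\mathcal{T}_\omega(f)$ into $(\beta+1)zf$, producing the linear first-order equation \eqref{functBergman}.

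The core of the argument is to decide for which $t>2$ the solution $f(z;t)$ extends analytically to all of $\mathbb{D}$, since by Theorem \ref{geq2} these $t$ are exactly the points of $\sigma(\mcj_\omega)\cap(2,\infty)$. In the partial-fraction form of \eqref{functBergman} the general solution is $C(z-t_-)^A(z-t_+)^B$, where $A$ is the residue at the interior singularity $t_-=(t-\sqrt{t^2-4})/2\in(0,1)$ and $B$ the residue at $t_+=1/t_->1$. Since $t_+$ sits outside $\overline{\mathbb{D}}$, the factor $(z-t_+)^B$ causes no trouble, so analyticity in $\mathbb{D}$ is equivalent to $A$ being a non-negative integer $m$. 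Solving the residue equation $-\tfrac{3+\beta}{2}+\tfrac{(1+\beta)t}{2\sqrt{t^2-4}}=m$ gives the values $t_m$ of \eqref{lambdam}, hence $\sigma(\mcj_\omega)\cap(2,\infty)\subseteq\{t_m\}$. For the reverse inclusion I would write down, for each $m$, the explicit solution: with $A=m$ and $B=-(m+3+\beta)$ (the latter forced by $A+B=-(3+\beta)$, read from the $z\to\infty$ behavior), normalizing at the origin and using $t_-+t_+=t_m$, $t_-t_+=1$ to identify $1/t_-=\sqrt{m+\beta+2}/\sqrt{m+1}$, one obtains the displayed product for $f(z;t_m)$. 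This function is analytic across $\overline{\mathbb{D}}$, hence lies in $H^2_\omega(\mathbb{R})$, so $t_m\in\sigma(\mcj_\omega)$ by Theorem \ref{geq2}; the agreement with \cite[Theorem 5.5.2]{Ibook} under the stated parameter dictionary is an independent check.

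The rest is arithmetic. Because $t_m^2=4+(1+\beta)^2/[(m+1)(m+\beta+2)]$ is strictly decreasing in $m$, the spectral radius of the self-adjoint $\mcj_\omega$, which equals its norm, is $t_0=(\beta+3)/\sqrt{\beta+2}$; this exceeds $2$ for every $\beta>-1$ since $(\beta+3)^2-4(\beta+2)=(\beta+1)^2>0$, so Corollary \ref{ext2} applies and names the extremal $f^*(z)=f(z;t_0)=(1-z/\sqrt{\beta+2})^{-(\beta+3)}$. By Theorem \ref{normequ} together with the reduction to first-order approximants in Section \ref{s-EP}, the minimal modulus of a zero equals $1/\mcu_\omega=2/\|\mcj_\omega\|=2\sqrt{\beta+2}/(\beta+3)$, attained at the zero $z_1=\|zf^*\|_\omega^2/\langle f^*,zf^*\rangle_\omega$ of the degree-one approximant to $1/f^*$; the case $\beta=0$ follows by simplifying $(1-z/\sqrt{2})^{-3}=2\sqrt{2}/(\sqrt{2}-z)^3$.

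The step I expect to be the main obstacle is the equivalence used throughout the second paragraph: that the solution of \eqref{functBergman} determined by $f(0;t)=1$ belongs to $H^2_\omega(\mathbb{R})$ precisely when its exponent $A$ at the interior singularity $t_-$ is a non-negative integer. Making this airtight requires knowing that the power series $\sum_n P_n(t)z^n$ from Theorem \ref{geq2} really is the analytic continuation of the closed-form solution, and then the careful bookkeeping that converts the residues $A,B$ into the clean radical expressions for $t_m$ and for the two factors of $f(z;t_m)$.
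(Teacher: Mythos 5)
Your proposal is correct and follows essentially the same route as the paper: compute $\mathcal{T}_{\omega}$ for the Bergman weight, differentiate \eqref{functionaleq} to obtain \eqref{functBergman}, characterize $\sigma(\mcj_\omega)\cap(2,\infty)$ as those $t>2$ with a non-negative integer residue $m$ at the interior singularity $t_-$, write down the explicit solutions $f(z;t_m)$, and take $m=0$ together with Theorem \ref{normequ} and Corollary \ref{ext2} to identify $\|\mcj_\omega\|=(\beta+3)/\sqrt{\beta+2}$ and $f^*$. The obstacle you flag at the end is resolved exactly as in the paper: relation \eqref{eqsum} shows $\sum_n P_n(t)z^n$ satisfies the first-order linear ODE near $0$, so by uniqueness it agrees with the closed-form solution there, and the analyticity criterion from the proof of Theorem \ref{geq2} (via Poincar\'e's Theorem) converts membership in $H^2_\omega(\bbR)$ into analyticity in $\bbD$.
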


\noindent\textit{Remark.} Note that the precise value $\frac{2}{3}\sqrt{2}$ of the modulus of the minimal zero in the
Bergman space improves the previous bound $\frac{1}{2} \sqrt{2}$ obtained in \cite{BKLSS}.

\medskip

It is noteworthy, as pointed out to us by A. Sola, that when $\beta
\in\bbN \cup \{0 \}$ the function $f^*(z)$ is a multiple of the
derivative of the reproducing kernel evaluated at $\lambda_0$ for
the space $\mathcal{A}^2_{\beta}$, where $\lambda_0:= \frac{t_0 -
\sqrt{t_0^2-4}}{2}$ is the smaller root of the Poincar\'e polynomial
corresponding to $t_0 = \|\mcj_\omega\|$.    In fact, we have the
following.

\begin{corollary}\label{derivative_kernel}
Let $\beta > -1$ be an integer, let $\lambda_0 =\frac{1}{\sqrt{2+\beta}}$, and let $k(z,w)$ be the reproducing kernel in the space $\mca^2_{\beta}$. Then the extremal function $f^*$ for \eqref{extremal} in $\mathcal{A}^2_{\beta}$ is related to the derivative of $k$ by:
$$
f^*(z) = \lambda_0 \partial_z k(z,\lambda_0).
$$
Moreover, the value of the supremum in \eqref{extremal} is given by
\begin{equation*}%\label{derivativeRep}
\sup_{f \in \mathcal{A}^2_{\beta}}  \frac{| \langle f, zf\rangle_{\omega} |}{\| z f \|_{\omega}^2} =\frac{f^{*\prime}(\lambda_0)}{f^*(\lambda_0) +  \lambda_0f^{*\prime}(\lambda_0) } = \frac{ \partial^2_zk(z,\lambda_0)}{\partial_z k(z,\lambda_0) +  \lambda_0 \partial^2_zk(z,\lambda_0)}\,\Big|_{z=\lambda_0}.
\end{equation*}
\end{corollary}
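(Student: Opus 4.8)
The plan is to reduce the whole statement to two explicit ingredients: the closed form of the reproducing kernel of $\mca^2_\beta$ and the closed form of $f^*$ furnished by Theorem \ref{Bergmansolution}. First I would identify the kernel. Since the monomials $\{z^n\}$ are orthogonal in $H^2_\omega$ with $\|z^n\|_\omega^2=\omega_n=\binom{\beta+n+1}{n}^{-1}$, the reproducing kernel is $k(z,w)=\sum_{n=0}^\infty \omega_n^{-1}(z\bar w)^n=\sum_{n=0}^\infty \binom{\beta+n+1}{n}(z\bar w)^n$, and the binomial series yields the closed form $k(z,w)=(1-z\bar w)^{-(\beta+2)}$. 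Differentiating in the first variable gives $\partial_z k(z,w)=(\beta+2)\bar w\,(1-z\bar w)^{-(\beta+3)}$, and evaluating at the real point $w=\lambda_0=1/\sqrt{\beta+2}$ produces $\partial_z k(z,\lambda_0)=(\beta+2)\lambda_0(1-z\lambda_0)^{-(\beta+3)}$. Because $(\beta+2)\lambda_0^2=1$ and $(1-z\lambda_0)^{-(\beta+3)}$ is exactly the function $f^*$ of Theorem \ref{Bergmansolution}, multiplying by $\lambda_0$ gives $\lambda_0\,\partial_z k(z,\lambda_0)=(\beta+2)\lambda_0^2 f^*(z)=f^*(z)$, which is the first assertion.

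For the value of the supremum I would recall from Theorem \ref{Bergmansolution} (equivalently, from Theorem \ref{normequ} together with the computation $\|\mcj_\omega\|=(\beta+3)/\sqrt{\beta+2}$) that the supremum in \eqref{extremal} equals $\mcu_\omega=\|\mcj_\omega\|/2=(\beta+3)/(2\sqrt{\beta+2})$. It then remains to verify the purely algebraic identity $\mcu_\omega=f^{*\prime}(\lambda_0)\big/\bigl(f^*(\lambda_0)+\lambda_0 f^{*\prime}(\lambda_0)\bigr)$. Setting $L:=f^{*\prime}(\lambda_0)/f^*(\lambda_0)$, this is equivalent to $1/\mcu_\omega-\lambda_0=1/L$. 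I would compute $L$ directly by differentiating the explicit $f^*$: one finds $f^{*\prime}(z)/f^*(z)=-(\beta+3)/(z-\sqrt{\beta+2})$, whence $L=(\beta+3)\sqrt{\beta+2}/(\beta+1)$. A one-line computation then confirms $1/\mcu_\omega-\lambda_0=(\beta+1)/\bigl((\beta+3)\sqrt{\beta+2}\bigr)=1/L$, establishing the middle equality.

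Finally, differentiating the first identity once more in $z$ gives $f^{*\prime}(z)=\lambda_0\,\partial^2_z k(z,\lambda_0)$; substituting $f^*=\lambda_0\,\partial_z k(\cdot,\lambda_0)$ and $f^{*\prime}=\lambda_0\,\partial^2_z k(\cdot,\lambda_0)$ into the middle expression and cancelling the common factor $\lambda_0$ yields the stated kernel formula evaluated at $z=\lambda_0$, completing the chain of equalities.

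The one point requiring care, and the only genuine obstacle, is the computation of $L$. The logarithmic-derivative formula \eqref{functBergman} has a removable singularity at $z=\lambda_0$: both its numerator $t-(3+\beta)z$ and denominator $1-tz+z^2$ vanish there when $t=\|\mcj_\omega\|$, precisely because the residue at $\lambda_0$ is zero for the extremal value (this is the condition $m=0$ exploited in deriving Theorem \ref{Bergmansolution}). Hence one cannot substitute $z=\lambda_0$ naively into \eqref{functBergman}; differentiating the explicit formula for $f^*$ sidesteps this entirely. Everything else is routine reproducing-kernel bookkeeping and elementary algebra. I would also note that, although I state the result for integer $\beta$ to match the remark preceding the corollary, the kernel identity and the value computation are valid verbatim for every $\beta>-1$.
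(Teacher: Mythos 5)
Your proposal is correct, and for the first and third assertions it follows essentially the paper's own route: identify $k(z,w)=(1-z\overline{w})^{-(2+\beta)}$, match $\lambda_0\partial_zk(z,\lambda_0)$ against the closed form of $f^*$ from Theorem \ref{Bergmansolution}, and differentiate once more for the final expression. Where you genuinely diverge is the middle equality. The paper never uses the numerical value of the supremum: it computes both inner products intrinsically via the reproducing property of the differentiated kernel, $\partial_wg(w)=\langle g,\partial_{\overline{w}}k(\cdot,w)\rangle_\omega$, obtaining $\langle f^*,zf^*\rangle_\omega=f^{*\prime}(\lambda_0)$ and $\|zf^*\|_\omega^2=f^*(\lambda_0)+\lambda_0f^{*\prime}(\lambda_0)$, and then invokes only the extremality of $f^*$ (Corollary \ref{ext2}) to identify the resulting quotient with the supremum. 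You instead import the value $\mcu_\omega=\|\mcj_\omega\|/2=(\beta+3)/(2\sqrt{\beta+2})$ from Theorems \ref{normequ} and \ref{Bergmansolution} and verify the equivalent algebraic identity $1/\mcu_\omega-\lambda_0=1/L$ with $L=f^{*\prime}(\lambda_0)/f^*(\lambda_0)=(\beta+3)\sqrt{\beta+2}/(\beta+1)$ by differentiating the explicit $f^*$; your computations check out. The paper's route buys structure: it exhibits the formula $\sup=f^{*\prime}(\lambda_0)/\bigl(f^*(\lambda_0)+\lambda_0f^{*\prime}(\lambda_0)\bigr)$ as a pure reproducing-kernel consequence of the single relation $f^*=\lambda_0\partial_zk(\cdot,\lambda_0)$, needing neither the closed form of $f^*$ nor the value of $\|\mcj_\omega\|$, so it would transfer to any space in which the extremal function is a kernel derivative. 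Your route buys elementary explicitness, and your caution about the removable singularity of \eqref{functBergman} at $z=\lambda_0$ (both $t_0-(3+\beta)z$ and $1-t_0z+z^2$ vanish there when $t_0=\|\mcj_\omega\|$, i.e.\ $m=0$) is well taken, though a L'H\^{o}pital evaluation of the right-hand side there does reproduce your $L$, so that formula could also have been used with care. Your closing observation is likewise apt: neither argument uses integrality of $\beta$; that hypothesis merely echoes the remark preceding the corollary, and both the kernel identity and the value of the supremum hold verbatim for every $\beta>-1$.
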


\begin{proof}
Since $\mathcal{A}^2_{\beta} = H^2_{\omega}$ for the weights $\omega_n = {\beta + n + 1 \choose n}^{-1},$ it is well-known that the reproducing kernel for $\mathcal{A}^2_{\beta}$ is given by
\begin{equation}\label{repker}
    k(z,w) =  \left( 1 - z\overline w\right)^{-(2+\beta)}.
\end{equation}
Therefore, by Theorem \ref{Bergmansolution}, the extremal function is equal to
\begin{align}\label{e-fk}
f^*(z) = c\, \partial_z k(z,w)|_{w=\lambda_0} = c\, (\overline{w} /z) \partial_{\overline w} k(z,w)|_{w=\lambda_0},
\end{align}
where the constant $c= \lambda_0$.

In order to show that
\[
\sup_{f \in \mathcal{A}^2_{\beta}}  \frac{| \langle f, zf \rangle_{\omega} |}{\| z f \|_{\omega}^2} = \frac{f^{*\prime}(\lambda_0)}{f^*(\lambda_0) +  \lambda_0 f^{*\prime}(\lambda_0) },
\]
let us first evaluate the denominator $\| z f \|_{\omega}^2$.

\begin{align*}
\langle zf^*, zf^* \rangle_{\omega} &=
\langle z\partial_z k(z,\lambda_0) , \overline{w} \partial_{\overline w}k(z,w) \rangle_{\omega}|_{w=\lambda_0}  \hspace{.5in} \mbox{by \eqref{e-fk}}\\
&=
\overline{w} \partial_{w} \left( w \partial_w k(w, \lambda_0)\right) |_{w=\lambda_0}\\
&=
\overline{w} \left(\partial_w k(w,\lambda_0) + w \partial_w^2 k(w,\lambda_0) \right)|_{w=\lambda_0}\\
&=
\frac{\overline{w}}{\lambda_0} \left( f^*(w) + w f^{*\prime}(w) \right)|_{w=\lambda_0}\\
&= \left( f^*(\lambda_0) + \lambda_0 f^{*\prime}(\lambda_0) \right).
\end{align*}
From the first to the second line of the computation we used the reproducing property of the derivative of a reproducing kernel: $\partial_w g(w) = \langle g, \partial_{\overline w}k(z,w)\rangle_{\omega}.$

Proceeding in a similar manner for the numerator, we obtain
\[
\langle f^*, zf^* \rangle_{\omega}=\overline{w} \partial^2_w k(w,
\lambda_0) |_{w=\lambda_0}=\frac{\overline{w}}{\lambda_0}
f^{*\prime}(w)|_{w=\lambda_0}=f^{*\prime}(\lambda_0).
\]

For the second equality of the proposition we use \eqref{e-fk} again, and the result follows.
\end{proof}

Notice that this connection with reproducing kernels implies that
the extremal functions for $\mathcal{A}^2_{\beta}$ cannot vanish in
the disk, since $\partial_{z} k(z,w)$ with $k(z,w)$ as in
\eqref{repker} doesn't vanish in $\bbD$, and thus, in the proof of
Theorem \ref{Bergmansolution}, the residue at the potential zero of
$f^*$ inside the disk must actually vanish, which if known a priori,
would simplify the proof.  Finally, although we have an explicit
computation showing the relationship between the extremal quantity
and derivatives of reproducing kernels, it would be interesting to
get a direct a priori argument showing that derivatives of
reproducing kernels evaluated at a particular point associated with
the quadratic extremal problem \eqref{extremal} are, in fact,
extremal functions in $\mathcal{A}^2_{\beta}$ for any non-negative
integer $\beta.$

%%%%%%%%%%%%%%%%%%%%%%%%%%%%%%%%%%%%%%%%%
\subsection{Dirichlet-type Spaces.}

Consider the space $D_{\alpha}$,
where $\omega_n=(n+1)^{\alpha}$ and $\alpha <0$.  In this case, the
matrix $\mcj_\omega$ is given by
\begin{align}\label{d-ist}
\mcj_\omega=
\begin{pmatrix}
0 & \left(\frac{2}{3}\right)^{\alpha/2} & 0 & 0 & \cdots \\
\left(\frac{2}{3}\right)^{\alpha/2} & 0 & \left(\frac{3}{4}\right)^{\alpha/2} & 0 & \cdots \\
0 & \left(\frac{3}{4}\right)^{\alpha/2} & 0 & \left(\frac{4}{5}\right)^{\alpha/2} & \cdots \\
0 & 0 & \left(\frac{4}{5}\right)^{\alpha/2} & 0 & \cdots \\
\vdots & \vdots & \vdots & \vdots & \ddots
\end{pmatrix}.
\end{align}
From Theorem \ref{dalpha} we know that $\|\mcj_\omega\|>2$, so $\mcu_\omega>1$, and the supremum in (\ref{extremal}) is attained. In \cite[Theorem B]{BKLSS}, it was shown that the zeros of optimal polynomial approximants in $D_\alpha$ are outside the disk of radius $2^{\alpha/2}$ when $\alpha<0$. The diagonal elements in the matrix $\mcj_\omega$ above are all $0$.  Therefore, (see \cite[Equation (1.3.29)]{Rice}), $ \|\mcj_\omega\| \leq 2 \sup \{ \left(\frac{n}{n+1}\right)^{\alpha/2}: n = 2, 3, \ldots \} = 2(2/3)^{\alpha/2}.$ Thus, invoking Theorem \ref{normequ}, we have the following improvement of the bound given in \cite{BKLSS}.

\begin{theorem}\label{newb}
If $\alpha<0$, then none of the zeros of any optimal polynomial approximants, and hence, the corresponding weighted reproducing kernels, in the space $D_\alpha$ lie inside the open disk of radius $(3/2)^{\alpha/2}$.
\end{theorem}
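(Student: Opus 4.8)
The plan is to convert the geometric statement about zeros into a single scalar inequality for $\mcu_\omega$, and then bound $\mcu_\omega$ by estimating the operator norm of the Jacobi matrix $\mcj_\omega$. Recall from Section \ref{s-EP} (following \cite{Seco}) that the infimum over all degrees of the moduli of the zeros of the optimal approximants equals $1/\mcu_\omega$; since the zeros of these approximants coincide with the zeros of the weighted reproducing kernels $k_n(z,0)$, the assertion that no such zero lies in the open disk of radius $(3/2)^{\alpha/2}$ is exactly the assertion that $1/\mcu_\omega \geq (3/2)^{\alpha/2}$, i.e.\ that $\mcu_\omega \leq (2/3)^{\alpha/2}$. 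By Theorem \ref{normequ} we have $\mcu_\omega = \|\mcj_\omega\|/2$, so the whole problem reduces to proving the norm bound $\|\mcj_\omega\| \leq 2(2/3)^{\alpha/2}$.

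To carry this out I would first read the entries of $\mcj_\omega$ off of \eqref{d-ist}: the diagonal vanishes and the off-diagonal entries are $c_n = (n/(n+1))^{\alpha/2}$ for $n = 2,3,\ldots$. The one observation that requires a moment of care is locating the supremum of these entries. Because $\alpha<0$, the exponent $\alpha/2$ is negative, and the base $n/(n+1)$ increases to $1$; raising to a negative power reverses monotonicity, so $n \mapsto (n/(n+1))^{\alpha/2}$ is \emph{decreasing}, and hence $\sup_{n\geq 2} c_n = c_2 = (2/3)^{\alpha/2}$. Since $\mcj_\omega$ is a symmetric tridiagonal matrix with zero diagonal, the standard estimate bounding its $\ell^2$ operator norm by the maximal absolute row sum, here at most $2\sup_n c_n$ (see \cite[Equation (1.3.29)]{Rice}), then gives $\|\mcj_\omega\| \leq 2(2/3)^{\alpha/2}$.

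Chaining these together yields $\mcu_\omega = \|\mcj_\omega\|/2 \leq (2/3)^{\alpha/2}$, whence the minimal modulus $1/\mcu_\omega \geq (3/2)^{\alpha/2}$, which is the claim. I do not expect a genuine obstacle at this stage: all of the analytic difficulty has already been absorbed into Theorem \ref{normequ}, the identification of $\mcu_\omega$ with half the Jacobi-matrix norm, after which the bound is purely a matter of pinning down the largest off-diagonal entry via the monotonicity argument above. It is worth noting why this improves the earlier bound $2^{\alpha/2}$ of \cite{BKLSS}: the off-diagonal entries decay monotonically toward $1$, so the crude uniform bound $2\sup_n c_n$ already captures a much smaller norm than a worst-case edge estimate would suggest, and since $2 > 3/2$ with $\alpha/2<0$ one has $2^{\alpha/2} < (3/2)^{\alpha/2}$, so the excluded disk is strictly larger.
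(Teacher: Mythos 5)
Your proposal is correct and matches the paper's own argument essentially line for line: the paper likewise reduces the claim to $\|\mcj_\omega\| \leq 2(2/3)^{\alpha/2}$ via Theorem \ref{normequ} and the identity $\inf|z| = 1/\mcu_\omega$, and obtains the norm bound from the zero diagonal and the estimate $\|\mcj_\omega\| \leq 2\sup_{n\geq 2}(n/(n+1))^{\alpha/2}$ of \cite[Equation (1.3.29)]{Rice}, with the supremum attained at $n=2$ by exactly your monotonicity observation. No gaps to report.
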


Figure $1$ shows a Mathematica plot of numerical estimates for one half of the norm of the matrix $\mcj_\omega$ for a variety of values of $\alpha$ in the range $[-12,0]$.  These estimates were obtained by finding the largest eigenvalue of the upper-left $N\times N$ block of $\mcj_\omega$ for large values of $N\approx 200$.  For comparison, we have also plotted the upper bound given by Theorem \ref{newb}, thus showing that the estimate in Theorem \ref{newb} is certainly not optimal.

For $\alpha = -1$, Theorem \ref{newb} tells us that the open disk of radius $(3/2)^{-1/2}\approx 0.816\ldots$ is free of zeros of optimal polynomial approximants in $D_{-1}$.  The exact value $2\sqrt{2}/3\approx 0.943\ldots$ is provided in Theorem \ref{Bergmansolution}.

\begin{figure}[h!]\label{norms}
  \centering
    \includegraphics[width=0.5\textwidth]{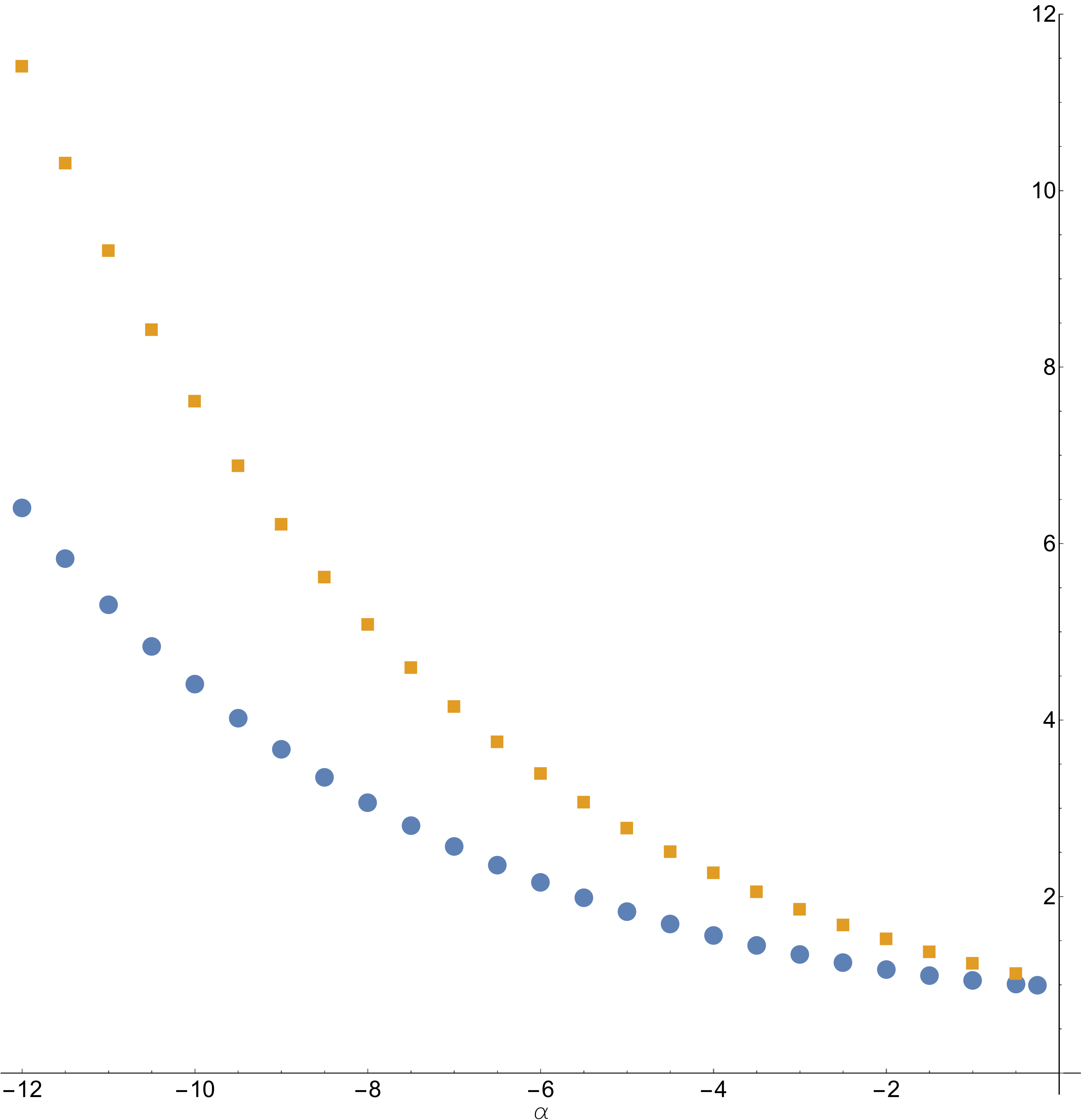}
  \caption{The circles show numerical estimates for one half of the norm of $\mcj_\omega$ for Dirichlet-type spaces $D_\alpha$ for various values of $\alpha\in[-12,0]$.  The squares show the value of the upper bound given by Theorem \ref{newb} for various values of $\alpha\in[-12,0]$.}
\end{figure}

Let us now return to equation \eqref{functionaleq} in the setting of Dirichlet-type spaces.  If $-\alpha\in\bbN$, then we can use the recursion relation satisfied by the Maclaurin coefficients of the extremal function to derive a linear differential equation satisfied by $f^*(z)=f(z,\|\mcj_\omega\|)$ (as in \cite[Section 5.4]{Ibook}).  Doing so yields
\[
(f^{*})'(z) = \|\mcj_\omega\| f^{*}(z) +  \|\mcj_\omega\| z f^{*}(z) -\sum_{j=0}^n {n \choose j} F_j'(z),
\]
where
\begin{align*}
F_j'(z) & = \frac{1}{z} F_{j-1}(z) \qquad \mbox{ for } j \geq 1, \\
F_j(0) & =  0, \\
F_0(z) & =   z^2 f^*(z).
\end{align*}
Performing straightforward algebraic manipulations involving multiplying by $z$ and differentiating, we obtain the following result.

\begin{theorem}\label{fdiff}
If $-\alpha = n\in\bbN$, the function $f^*(z)$ given in Corollary \ref{ext2} satisfies the homogeneous linear differential equation
\[
\sum_{j=0}^{n} r_j(z) (f^*)^{(j)}(z) = 0,
\]
where each $r_j$ is a polynomial, $r_{n-1}(z)= z^{n-2} \left((5+n)z^2- 3 \|\mcj_\omega\| z + 1 \right)$ and $r_n (z) =z^{n-1}(z^2- \|\mcj_\omega\| z + 1)$.
\end{theorem}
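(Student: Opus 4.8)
The plan is to specialize the functional equation \eqref{functionaleq} to $\omega_k=(k+1)^{-n}$ and to trade the nonlocal operator $\mathcal{T}_{\omega}$ for a genuine differential operator. For this weight a direct computation gives that $\mathcal{T}_{\omega}$ multiplies the $k$-th Maclaurin coefficient by $\left(\frac{k+3}{k+2}\right)^{n}-1$, so if $f^*(z)=\sum_{k\ge0}a_kz^k$ then
\[
G(z):=z^2f^*(z)+z^2\mathcal{T}_{\omega}(f^*)(z)=\sum_{k\ge0}a_k\left(\frac{k+3}{k+2}\right)^{n}z^{k+2}.
\]
On the other hand, \eqref{functionaleq} with $t=\|\mcj_\omega\|$ rearranges to $G=1-f^*+\|\mcj_\omega\|\,zf^*$. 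Because $f^*$ is analytic in a disk of radius strictly larger than $1$ (Corollary \ref{ext2} and the first Remark following it), all the operations below are legitimate on a neighborhood of $\overline{\bbD}$.

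The device that makes everything local is the Euler operator $\theta:=z\frac{d}{dz}$, which acts diagonally on monomials via $\theta z^m=mz^m$ and $(\theta+1)z^m=(m+1)z^m$. Hence $\theta^n$ scales the coefficient of $z^{k+2}$ by $(k+2)^n$, while $(\theta+1)^n$ scales the coefficient of $z^{k+2}$ in $z^2f^*$ by $(k+3)^n$; comparing these against the series for $G$ gives the operator identity
\[
\theta^n G=(\theta+1)^n(z^2f^*).
\]
Since $\theta^n$ kills constants, applying $\theta^n$ to $G=1-f^*+\|\mcj_\omega\|\,zf^*$ and substituting produces the homogeneous linear equation
\[
(\theta+1)^n(z^2f^*)+\theta^n f^*-\|\mcj_\omega\|\,\theta^n(zf^*)=0,
\]
which has order exactly $n$. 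This step is the crux: it replaces the iterated integrations encoded in $\mathcal{T}_{\omega}$ (equivalently, the auxiliary primitives $F_j$ with $zF_j'=F_{j-1}$) by the single local operator $\theta^n$.

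What remains is to rewrite the last display in the normal form $\sum_{j=0}^{n}r_j(z)(f^*)^{(j)}=0$ and to identify its top coefficients. Feeding in the standard expansions $\theta^n=z^nD^n+\binom{n}{2}z^{n-1}D^{n-1}+\cdots$ and $(\theta+1)^n=z^nD^n+\binom{n+1}{2}z^{n-1}D^{n-1}+\cdots$ (with $D=\frac{d}{dz}$), and applying the Leibniz rule to $z^2f^*$ and to $zf^*$, one collects the coefficient of $D^nf^*$ as $z^{n+2}-\|\mcj_\omega\|z^{n+1}+z^n=z^n(z^2-\|\mcj_\omega\|z+1)$. A short check shows that $\theta^n$, $\theta^n(z\,\cdot\,)$ and $(\theta+1)^n(z^2\,\cdot\,)$ all have polynomial coefficients divisible by $z$, so the whole operator may be divided by $z$; this yields the stated leading coefficient $r_n(z)=z^{n-1}(z^2-\|\mcj_\omega\|z+1)$, and collecting the $D^{n-1}f^*$ terms in the same fashion gives $r_{n-1}$.

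I expect the only real difficulty to be computational: tracking the subleading ($D^{n-1}$) contributions, where the Stirling-type constants $\binom{n}{2}$, $\binom{n+1}{2}$ and the Leibniz corrections coming from $z^2f^*$ and $zf^*$ must be combined correctly, and confirming that the common factor $z$ can indeed be extracted from every $r_j$. As a consistency check one specializes to $n=1$: the scheme collapses to the first-order equation $(z^2-\|\mcj_\omega\|z+1)(f^*)'+(3z-\|\mcj_\omega\|)f^*=0$, whose normalized solution is exactly the rational extremal function of the classical Bergman space $D_{-1}=\mathcal{A}^2_0$ recorded in Theorem \ref{Bergmansolution}, corroborating both the order and the form of $r_n$.
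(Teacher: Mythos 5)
Your route is, modulo packaging, the paper's own: the paper encodes $z^2f^*+z^2\mathcal{T}_{\omega}(f^*)$ through the iterated primitives $F_j$ (with $zF_j'=F_{j-1}$, so $\sum_{j}\binom{n}{j}F_j$ is exactly your $G$) and then eliminates them by alternately multiplying by $z$ and differentiating; that composition of operations equals $D\theta^{n-1}=(\theta+1)^{n-1}D$, which is precisely your ``apply $\theta^n$, then divide out the common factor $z$.'' Your operator identity $\theta^nG=(\theta+1)^n(z^2f^*)$, the resulting homogeneous equation of order exactly $n$, the divisibility of all coefficients by $z$, the leading coefficient $r_n(z)=z^{n-1}(z^2-\|\mcj_\omega\|z+1)$, and the $n=1$ consistency check against Theorem \ref{Bergmansolution} are all correct.

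The genuine gap is the one step you waved through: ``collecting the $D^{n-1}f^*$ terms in the same fashion gives $r_{n-1}$.'' It does not give the $r_{n-1}$ in the statement. Carrying out the collection --- most cleanly via $P(\theta)M_{z^s}=M_{z^s}P(\theta+s)$, so your equation reads $z^2(\theta+3)^nf^*+\theta^nf^*-\|\mcj_\omega\|\,z(\theta+1)^nf^*=0$, together with $(\theta+c)^n=z^nD^n+\bigl(\binom{n}{2}+nc\bigr)z^{n-1}D^{n-1}+\cdots$ --- yields, after dividing by $z$,
\[
r_{n-1}(z)=\Bigl(3n+\binom{n}{2}\Bigr)z^{n}-\binom{n+1}{2}\,\|\mcj_\omega\|\,z^{n-1}+\binom{n}{2}\,z^{n-2},
\]
which coincides with the stated $z^{n-2}\bigl((5+n)z^2-3\|\mcj_\omega\|z+1\bigr)$ only when $n=2$. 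For instance, $n=3$ gives $12z^3-6\|\mcj_\omega\|z^2+3z$ rather than $8z^3-3\|\mcj_\omega\|z^2+z$; one can confirm this directly, since the operator equation is equivalent, coefficientwise, to the recurrence $a_m=\|\mcj_\omega\|a_{m-1}-\bigl(\tfrac{m+1}{m}\bigr)^na_{m-2}$, i.e.\ to \eqref{ncond} for $\omega_j=(j+1)^{-n}$. So your derivation is sound, but completed honestly it contradicts the printed $r_{n-1}$ for $n\geq3$: the binomial coefficients above are what your (and the paper's) elimination actually produces, and they degenerate to $(n+5,\,3,\,1)$ exactly at $n=2$ --- the only case ($\alpha=-2$) in which the paper subsequently applies the formula, so the paper's numerical conclusions there are unaffected. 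You should either state $r_{n-1}$ with the binomial coefficients (which would correspondingly modify the indicial exponent \eqref{rj} for $n\neq2$), or restrict the explicit $r_{n-1}$ to $n=2$; note that as printed the formula is not even a polynomial at $n=1$, where your own check correctly yields $r_0=3z-\|\mcj_\omega\|$.
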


We can use Theorem \ref{fdiff} to deduce information about the extremal function $f^*$.  By a classical result for homogeneous linear differential equations (\cite[Chapter XV, p.~356]{Ince}), the singularities of $f^*$ can only occur at the zeros of the leading coefficient $r_n(z)$.  Those zeros are at $0$, $\frac{1}{2}(\|\mcj_\omega\| - \sqrt{ \|\mcj_\omega\|^2-4})$, and $\frac{1}{2}( \|\mcj_\omega\| + \sqrt{ \|\mcj_\omega\|^2-4})$. Since the extremal function $f^*$ is analytic in the unit disk, we can exclude the possibility of a singularity at $0$ or $\frac{1}{2}(\|\mcj_\omega\| - \sqrt{ \|\mcj_\omega\|^2-4})$, so we have arrived at the same conclusion that we saw in the proof of Theorem \ref{geq2}, namely that $f^*$ is analytic in a disk of radius $\frac{1}{2}(\|\mcj_\omega\| + \sqrt{\|\mcj_\omega\|^2-4})$.  In fact, we will see below that in this setting, $f^*$ analytically continues (in general, as a multi-valued function), to $\bbC\setminus\{\frac{1}{2}(\|\mcj_\omega\| + \sqrt{\|\mcj_\omega\|^2-4}) \}$.

Letting $\lambda_0 : = \frac{\|\mcj_\omega\| - \sqrt{\|\mcj_\omega\|^2-4}}{2}$ and $\lambda_1 : = \frac{\|\mcj_\omega\| + \sqrt{\|\mcj_\omega\|^2-4}}{2}$, we can divide by the coefficient $r_n(z)$ and rewrite the differential equation from Theorem \ref{fdiff} as
\[
(f^*)^{(n)}(z) + \frac{p(z,\|\mcj_\omega\|)}{z-\lambda_1} (f^*)^{(n-1)}(z) +  \mbox{lower order terms }  = 0,
\]
where $p(z,\|\mcj_\omega\|):=\frac{ (5+n)z^2- 3 \|\mcj_\omega\| z + 1 }{z ( z - \lambda_0)}$.  By a theorem of Fuchs (\cite{Ince}), $f^*$ can locally be represented as
\[
f^*(z) = \sum_{k=0}^{\infty} b_k (z - \lambda_1)^{k+r},
\]
where $b_0 \neq 0$ and $r$ is a real number.  Since $f^*$ is not entire, we know that $r\notin \mathbb{N}\cup\{0\}$. By taking consecutive derivatives of $f^*$ and equating the lowest order terms in this expansion, we obtain the indicial equation:
\[
r = n - 1 - p(\lambda_1,\|\mcj_\omega\|).
\]
Simplifying this expression results in
\begin{equation}\label{rj}
r = \frac{n}{2} - 3 - \frac{n\|\mcj_\omega\|}{2\sqrt{\|\mcj_\omega\|^2-4}}.
\end{equation}

For $\alpha = -2$ we can use the numerical estimate in Figure 1 to conclude that $r$ is not an integer.  Indeed, using equation \eqref{rj}, one can see that any integer value for $r$ yields a numerical value for $\|\mcj_{\omega}\|$ that we know to be incorrect.  (We expect that the same is true for all negative integers $\alpha$ with $\alpha<-1$.)
 This means that the extremal function $f^*$ in the Dirichlet-type space $D_{-2}$ has a logarithmic singularity of type $(z - \lambda_1)^r,$ where $r$ is a real number that is not an integer. Notice that this implies that Corollary \ref{derivative_kernel} does not apply universally.
 This also shows that identical function spaces but with different norms can yield extremal functions $f^*$ with significant regularity differences.  For the Bergman-type spaces with integer $\beta$, extremal functions are rational functions, while for the Dirichlet-type spaces, when $\alpha$ is a negative integer different from $-1$, we have seen that the extremal functions can have logarithmic singularities.

%%%%%%%%%%%%%%%%%%%%%%%%%%%%%%%%%%%%%%%%%
\section{Jentzsch-Type Theorems}\label{Jentzsch}

Our discussion up to this point has focused on finding the smallest possible modulus of a zero of a polynomial approximant or corresponding weighted reproducing kernel. In this section we will discuss the bulk distribution of zeros of optimal polynomial approximants of large degree and prove theorems analogous to those of Jentzsch.  Recall that the classical theorem of Jentzsch (see, e.g., \cite{DHK,Di,Ti,La}) states that if the Maclaurin series for an analytic function
$f$ has radius of convergence $1$, then every point on the unit circle is a limit point of the zeros of the Taylor polynomials of $f$, and gives a rather precise estimate on the density of those zeros in a neighborhood of the unit circle.

In the context of the spaces we are considering in this paper, it is convenient to introduce the following terminology. We call a positive sequence $\omega=\{\omega_n\}_{n\geq0}$ satisfying (\ref{etacond}) \textit{integral} if there is a regular measure $\mu$ on $\overline{\bbD}$ such that for all $f \in H^2_{\omega}$ we have
\[
\int_{\overline{\bbD}}|f(z)|^2d\mu(z)=\|f\|^2_{\omega}.
\]
In other words, we want the inner product induced by $\omega$ to also be induced by a regular measure on the unit disk so that we may apply the results mentioned in Section \ref{reg}.  We will say that $\omega$ is \textit{quasi-integral} if $\omega$ generates a norm that is equivalent to the norm generated by a regular rotation invariant measure on the closed unit disk.

For any $f\in H^2_{\omega}$, $f \neq 0$, let $W_f$ denote the projection of $1$ onto the space $\overline{f\cdot\mcp}$, where $\mcp$ is the space of polynomials and the closure is taken in the $\|\cdot\|_\omega$-norm.  We note that $f$ is cyclic precisely when $W_f=1$.  Note that since $W_f$ absorbs all the zeros of $f$ in $\bbD$, $W_f/f$ is analytic in $\bbD.$

We can now state our Jentzsch-type results for optimal polynomial approximants (and associated weighted reproducing kernels).

\begin{theorem}\label{J1}
Let $\omega=\{\omega_n\}_{n\geq0}$ be quasi-integral and satisfy (\ref{etacond}). Suppose $f\in H^2_{\omega}$ is such that $W_f/f$ has a singularity on the unit circle and $f(0)\neq0$.  Let $p_n$ denote the $n^{th}$ optimal approximant to $1/f$.  For each $\varepsilon > 0,$ let $\tau_{\varepsilon}(n)$ denote the number of zeros of $p_n$ that lie in the disk $\{ z: |z| \leq 1 + \varepsilon\}$. Then
\[
\limsup_{n \rightarrow \infty} \frac{\tau_{\varepsilon}(n)}{n} = 1.
\]
\end{theorem}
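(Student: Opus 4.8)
The plan is to reduce the statement about zeros of optimal approximants $p_n$ to the asymptotic behavior of the orthonormal polynomials $\{\varphi_k\}$ in the weighted inner product $\langle \cdot,\cdot\rangle_f$, and then to invoke the regularity machinery from Section \ref{reg} together with a Jentzsch-type argument. Recall from \eqref{optortho} that $p_n(z) = \overline{f(0)}\sum_{k=0}^n\overline{\varphi_k(0)}\varphi_k(z)$, so $p_n$ is (up to a constant) the weighted reproducing kernel $k_n(z,0)$. The quasi-integrality hypothesis means that $\|\cdot\|_\omega$ is comparable to the $L^2(\mu)$-norm for a regular rotation-invariant measure $\mu$, and multiplying by $|f|^2$ produces a measure $d\nu = |f|^2\,d\mu$ whose associated orthonormal polynomials are exactly the $\varphi_k$. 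I would first argue that $\nu$ is regular (regularity of $\mu$ should be inherited after multiplication by $|f|^2$, since $f$ is analytic and bounded below in modulus away from its zeros; the zeros, being finitely many in any compact subset of $\bbD$, do not destroy regularity), so the second characterization of regularity in Section \ref{reg} applies with $r=1$: we have $\lim_{k\to\infty}|\varphi_k(z)|^{1/k} = |z|$ uniformly on compact subsets of $\{|z|>1\}$, and the normalized zero-counting measures $\nu_k$ of the $\varphi_k$ converge (after balayage) to equilibrium measure on the unit circle.

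Next I would translate this into information about $p_n$. The key heuristic is that for $|z|>1$ large, the sum $\sum_{k=0}^n \overline{\varphi_k(0)}\varphi_k(z)$ is dominated by its top term $\varphi_n(z)$ (since the $\varphi_k(z)$ grow like $|z|^k$), so $|p_n(z)|^{1/n}\to|z|$ on compacts in $\{|z|>1\}$; this forces, via the standard lower-envelope / Hurwitz argument underlying Jentzsch's theorem, every point of the unit circle to be a limit point of zeros of $p_n$, and more quantitatively forces a positive proportion of zeros to accumulate near the circle. To get the sharp statement $\limsup \tau_\varepsilon(n)/n = 1$, I would show that the normalized zero counting measures of $p_n$ cannot put asymptotic mass bounded away from the closed disk $\{|z|\le 1+\varepsilon\}$ for a subsequence of $n$'s. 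The singularity hypothesis on $W_f/f$ enters precisely here: it guarantees that $p_n$ genuinely behaves like a degree-$n$ polynomial with zeros pushed toward the unit circle rather than off to infinity, i.e.\ that along a subsequence the leading coefficient does not degenerate and the potential-theoretic limit of the zero distribution is the equilibrium measure of $\overline{\bbD}$, which is supported on the circle. Concretely, I would compare $p_n$ to the Taylor polynomials of $W_f/f$ (which has radius of convergence exactly $1$ by the singularity assumption) and apply the classical Jentzsch theorem to those, transferring the conclusion via the $L^2$-convergence $p_n f \to W_f$.

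The main obstacle, I expect, is the last transfer step: proving that along a suitable subsequence a \emph{full} proportion (hence the $\limsup=1$, not merely a positive liminf) of the zeros of $p_n$ lie in $\{|z|\le 1+\varepsilon\}$. A weak-* limit argument only gives that the limiting zero distribution is a probability measure supported in $\overline{\bbD}$ together with possibly some mass escaping to infinity; the subtlety is ruling out, along the extremal subsequence, that a nonvanishing fraction of zeros drifts outside $\{|z|\le 1+\varepsilon\}$ or to $\infty$. I would handle this by relating the zero counting measure of $p_n$ to that of $\varphi_n$ through the reproducing-kernel representation and using the regularity conclusion that the balayage of $\nu_n$ onto the unit circle converges to equilibrium measure; the singularity of $W_f/f$ on the circle prevents the "missing mass" that would otherwise allow zeros to congregate strictly inside the disk, forcing the $\limsup$ up to the maximal value $1$. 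A careful potential-theoretic estimate, probably via the lower envelope theorem for logarithmic potentials applied to $\log|p_n|^{1/n}$, is where the real work lies.
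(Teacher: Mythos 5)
Your setup matches the paper's (the kernel representation \eqref{optortho}, regularity of $|f|^2\,d\mu$, the singularity of $W_f/f$ as an obstruction to continuation), but the proof has a genuine gap exactly at the step you flag as the ``main obstacle,'' and the concrete mechanism you propose there does not work. Comparing $p_n$ with the Taylor sections of $W_f/f$ and ``transferring'' classical Jentzsch via $p_nf\to W_f$ cannot succeed: norm convergence only gives locally uniform convergence $p_n\to W_f/f$ on compact subsets of $\bbD$, which controls zeros \emph{inside} the open disk (that is the paper's Proposition \ref{J3}, via Hurwitz) and says nothing about the fraction of zeros near or beyond the circle; moreover there is no coefficient-wise or zero-set relation between $p_n$ and the Taylor sections. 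The paper's actual mechanism, absent from your sketch, is elementary and localized at $z=0$: along a subsequence with $\deg p_n=n$, write $p_n(z)=\overline{f(0)}\,\overline{\varphi_n(0)}\,\kappa_n\prod_{j=1}^n(z-z_{j,n})$; since $|p_n(0)|\to|W_f(0)/f(0)|>0$ (Lemma \ref{J4}) and $\kappa_n^{1/n}\to1$ by regularity, one gets $\lim_n|\varphi_n(0)|^{1/n}\bigl(\prod_j|z_{j,n}|\bigr)^{1/n}=1$. The crux is then the claim $\limsup_n|\varphi_n(0)|^{1/n}=1$: if the limsup were $\gamma<1$, the bound $\limsup|\varphi_n(z)|^{1/n}\le|z|$ would make $\{p_n\}$ a normal family on the disk of radius $1/\gamma>1$, and any limit function would analytically continue $W_f/f$ across the unit circle, contradicting the singularity hypothesis. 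This is precisely where the hypothesis enters, and it is the nondegeneracy of the leading coefficient that you assert (``does not degenerate'') but never prove. From the claim one gets $\liminf_n\bigl(\prod_j|z_{j,n}|\bigr)^{1/n}=1$, and if $\limsup\tau_{\varepsilon}(n)/n=\gamma<1$ the product would be at least roughly $(1-\delta)\,(1+\varepsilon)^{1-\gamma}$ along the liminf subsequence, forcing $1\ge(1+\varepsilon)^{1-\gamma}$, a contradiction. Note this product argument makes zeros escaping to infinity harmless for Theorem \ref{J1} (they only enlarge the product); your worry about escaping mass, and your proposed balayage/lower-envelope machinery, belong to the paper's proof of Theorem \ref{J2}, where an extra lemma (Lemma \ref{omit}) is indeed needed.

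A second, smaller gap: your justification of regularity of $|f|^2\mu$ is wrong as stated. That $f$ has finitely many zeros in compact subsets of $\bbD$ is irrelevant, since regularity is an $n$th-root statement about leading coefficients and the delicate behavior is at the boundary --- indeed $\mu$ may be supported entirely on the unit circle (the Hardy space case). The paper's Lemma \ref{regular2} uses rotation invariance of $\mu$ essentially, splitting into the case where $\mu$ charges circles $|z|=r_n\uparrow1$ (handled by the Erd\"os--Tur\'an criterion together with the boundary uniqueness theorem to ensure $|f|\neq0$ a.e.\ there) and the complementary case (handled via annuli on which $|f|$ is bounded below, using that monomials are the monic orthogonal polynomials for rotation-invariant measures). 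Relatedly, your heuristic that $|p_n(z)|^{1/n}\to|z|$ for $|z|>1$ because ``the top term dominates'' is false in general: $\varphi_n(0)$ may vanish or decay exponentially along many $n$, so only the upper bound holds, which is why the paper passes to a subsequence with $\deg p_n=n$ and argues entirely with limsups.
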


\begin{theorem}\label{J2}
Let $\omega=\{\omega_n\}_{n\geq0}$ be quasi-integral and satisfy (\ref{etacond}). Suppose $f\in H^2_{\omega}$ is such that $W_f/f$ has a singularity on the unit circle and $f(0)\neq0$.  Then every point on the unit circle is a limit point of the zeros of the optimal approximants of $1/f$.
\end{theorem}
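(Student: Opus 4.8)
The plan is to prove Theorem \ref{J2} as a consequence of the quantitative statement in Theorem \ref{J1}, so I would first settle Theorem \ref{J1} and then deduce the pointwise accumulation result. The strategy rests on the observation from the introduction that the optimal approximant $p_n$ is (up to the constant $\overline{f(0)}$) the weighted reproducing kernel $k_n(z,0)$ for $\Pol_n$ in the space with inner product $\langle g,h\rangle_f=\langle gf,hf\rangle_\omega$, and that the zeros of $p_n$ are the zeros of the orthonormal polynomial of degree $n$ for this weighted inner product. Because $\omega$ is quasi-integral, the weighted inner product is equivalent to one coming from a regular rotation-invariant measure on $\overline{\bbD}$, so I would invoke the regularity machinery from Section \ref{reg}: for the associated orthonormal polynomials $\varphi_n$ one has $\lim_{\nri}|\varphi_n(z)|^{1/n}=|z|/r$ uniformly on compacta in $|z|>r$, and the normalized zero-counting measures $\nu_n$ satisfy $\lim_{\nri}\int\log|z-w|\,d\nu_n(w)=\log|z|$ for $|z|>r$, with $r$ the radius of the disk supporting the measure. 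Here $r=1$ since the weight lives on $\overline{\bbD}$ and the norm is rotation invariant.

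The key analytic input is the connection between the singularity of $W_f/f$ on the unit circle and the \emph{radius of convergence} of the approximants. I would argue that the normalized sequence $p_n f$ converges in $H^2_\omega$ to $W_f$ (this is just the statement that $p_nf$ is the orthogonal projection of $1$ onto $\overline{f\cdot\Pol_n}$, which increases to $\overline{f\cdot\Pol}$), and hence $p_n\to W_f/f$ in a suitable sense on compacta of $\bbD$. The assumption that $W_f/f$ has a genuine singularity on $\partial\bbD$ forces the ``radius of maximal convergence'' of the sequence $\{p_n\}$ to be exactly $1$: the $p_n$ cannot converge to a holomorphic limit on any disk of radius larger than $1$, for otherwise $W_f/f$ would continue analytically past that singularity. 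This is precisely the hypothesis needed to run the Jentzsch argument. Combining this with the regularity estimate, I would show that the counting function $\tau_\varepsilon(n)$ of zeros inside $\{|z|\le 1+\varepsilon\}$ satisfies $\limsup_n \tau_\varepsilon(n)/n=1$; the upper bound $\le 1$ is trivial (degree $n$), and the lower bound comes from the potential-theoretic fact that if a positive proportion of zeros stayed outside $|z|=1+\varepsilon$, the logarithmic-potential asymptotics of $\nu_n$ would be incompatible with the limiting equilibrium measure of the unit circle together with the maximal-convergence radius being $1$. This establishes Theorem \ref{J1}.

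For Theorem \ref{J2} I would then upgrade the bulk statement to pointwise accumulation at every $\zeta\in\partial\bbD$. The mechanism is the classical Jentzsch/Szeg\H{o}–Walsh argument: suppose for contradiction that some arc $A\subset\partial\bbD$ and some $\delta>0$ are such that no zero of any $p_n$ lies in the neighborhood $\{|z-\zeta|<\delta:\zeta\in A\}$ for infinitely many $n$. On that zero-free region the functions $p_n^{1/n}$ (suitably normalized) form a normal family, and the potential-theoretic asymptotics $\frac1n\log|p_n(z)|\to \int\log|z-w|\,d\nu(w)$ would then give a harmonic limit across $A$; but the equilibrium measure of $\overline{\bbD}$ (the normalized arclength on the unit circle, by rotation invariance) charges every arc, so the limiting potential cannot be harmonic there, and moreover the maximal-convergence radius $1$ forces $\limsup_n|p_n(z)|^{1/n}\ge 1$ for $|z|>1$ near $A$ while the convergence $p_n\to W_f/f$ on $\bbD$ near $A$ forces the potential to equal $\log|W_f/f|$-type behavior inside — the mismatch at $\zeta$ produces a contradiction with the analytic continuation being obstructed at $\zeta$. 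Thus every boundary point is an accumulation point of zeros.

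The main obstacle, and the step I would spend the most care on, is making the transition from ``$W_f/f$ has a singularity on $\partial\bbD$'' to a usable statement about the exact asymptotic behavior of $\frac1n\log|p_n(z)|$. In the classical Jentzsch theorem the object is the Taylor polynomial, whose coefficients are literally the Maclaurin coefficients of $f$, so the overconvergence estimates are immediate from Cauchy–Hadamard; here $p_n$ is a reproducing kernel in a weighted space, so I must replace the Cauchy–Hadamard radius-of-convergence computation by the regularity-based root asymptotics of the orthonormal polynomials, and simultaneously track how the singularity of $W_f/f$ constrains those asymptotics. Controlling this requires verifying that the equivalence of norms coming from quasi-integrality does not destroy the $n$-th root asymptotics (it does not, since equivalent norms change $\varphi_n$ only by factors that are subexponential in $n$), and that the limit measure for the $\nu_n$ is genuinely the equilibrium measure of the unit circle rather than of some smaller disk — which is where rotation invariance and the singularity on $\partial\bbD$ are both essential. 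I expect the cleanest route is to phrase everything in terms of the logarithmic potential of the limiting zero distribution and to invoke the lower envelope / principle of descent from potential theory to convert weak-* convergence of $\nu_n$ into the pointwise accumulation claim.
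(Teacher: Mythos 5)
Your high-level plan is the right one and matches the paper's in outline (normal families plus the singularity of $W_f/f$ force the $n$-th root asymptotics at $0$; potential theory upgrades the bulk statement to accumulation at every boundary point), but there are two concrete gaps. First, you assert that the zeros of $p_n$ are the zeros of the degree-$n$ orthonormal polynomial for the weighted inner product. That is false: $p_n$ is proportional to the kernel $k_n(z,0)=\sum_{k=0}^n\overline{\varphi_k(0)}\varphi_k(z)$, a linear combination of $\varphi_0,\dots,\varphi_n$, so the Stahl--Totik zero-distribution asymptotics for orthonormal polynomials cannot be quoted for the $p_n$ themselves. The paper instead derives $n$-th root bounds directly for $p_n$ (namely $\limsup_n|p_n(e^{i\theta})|^{1/n}\le 1$ from the bounds (\ref{newout})--(\ref{newzero}) on the $\varphi_k$, and $\lim|p_n(0)|^{1/n}=1$ along a subsequence via the normal-families argument you sketch) and then applies \cite[Lemma 4.3b]{MSS}, which is a statement about monic polynomials with exactly these bounds, not about orthogonal polynomials. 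Relatedly, quasi-integrality gives a regular rotation-invariant $\mu$ whose norm is equivalent to the $\omega$-norm, but the measure relevant to the $\varphi_n$ is $|f|^2\mu$, which is \emph{not} rotation invariant; its regularity is not an assumption but a lemma (the paper's Lemma \ref{regular2}, proved via Erd\H{o}s--Tur\'an on circles and a monomial comparison on annuli), and your parenthetical about norm equivalence handles only the $\omega$-versus-$\mu$ discrepancy, not the $\mu$-versus-$|f|^2\mu$ one.

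The more serious gap is zeros escaping to infinity. Your key step, $\frac1n\log|p_n(z)|\to\int\log|z-w|\,d\nu(w)$, presupposes that the normalized zero-counting measures $\nu_n$ are supported in a fixed compact set, and this can genuinely fail: the paper points out that already in $H^2$ there are examples where zeros of the $p_n$ accumulate at infinity, so weak-* limits of the $\nu_n$ can lose mass and the potential representation breaks down exactly where your harmonicity-across-a-zero-free-arc contradiction needs it. The paper's Lemma \ref{omit} is the missing device: after passing to a subsequence realizing $\lim|\varphi_n(0)|^{1/n}=1$, one divides out all zeros of modulus greater than $2$ and shows the removed factor has $n$-th root modulus tending to $1$ uniformly on $\bard$; this rests on the product identity $\bigl(\prod_j|z_{j,n}|\bigr)^{1/n}\to 1$ (Corollary \ref{when}, itself a consequence of $|p_n(0)|^{1/n}\to1$ and the leading-coefficient asymptotics (\ref{newkap})) together with Proposition \ref{J3} bounding the number of zeros in compacta of $\bbD$. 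The resulting monic truncations $q_n$ then satisfy the hypotheses of the MSS lemma, whose conclusion --- weak-* convergence of the zero measures to normalized arclength on the unit circle --- charges every arc and so yields Theorem \ref{J2}. Your Jentzsch-style contradiction argument could likely be made to work, but only after this truncation step, which your proposal never addresses; as written, the argument fails for any $f$ whose approximants have zeros running off to infinity. A smaller point: your lower-bound sketch for Theorem \ref{J1} leans on the limiting measure being the equilibrium measure, which is essentially the content of Theorem \ref{J2} and risks circularity; the paper's Theorem \ref{J1} needs only the elementary counting estimate from the zero-modulus product and Proposition \ref{J3}.
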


\noindent\textit{Remarks.}  (i) Assuming the same hypotheses as in Theorems \ref{J1} and \ref{J2}, the conclusions hold for the weighted reproducing kernels $k_n(z,0)$ when $n \rightarrow \infty$. In many situations, such as in the classical Hardy and Bergman spaces, the reproducing kernels with respect to any logarithmically subharmonic weight do not have any zeros in the open unit disk. This fact, combined with Theorem \ref{J1}, shows that in these spaces, \emph{all} the zeros of the $n$-th optimal approximants (or equivalently of the $n$-th weighted reproducing kernels)  eventually move out of the unit disk.   Moreover, in view of Theorem \ref{J1},
there exists a subsequence $n_k \rightarrow \infty$ for which $n_k-o(n_k)$ zeros of the $n_k$-th polynomial approximant (or, the corresponding $n_k$-th weighted reproducing kernel) converge to the unit circle when $k \rightarrow \infty$.

(ii) Theorems \ref{J1} and \ref{J2} are similar in spirit to Theorem 2.1 of \cite{MSS}, in which the authors considered the behavior of zeros of orthogonal polynomials in weighted Bergman spaces.  We are concerned here with the behavior of zeros of optimal polynomial approximants and associated polynomial reproducing kernels.

(iii) Note that similar to Jentzsch's Theorem, the limit superior in Theorem \ref{J1} cannot be replaced by limit (see \cite{Di}).

(iv) The above theorems apply to functions $f$ that are cyclic but not invertible in $H^2_{\omega}$.

\bigskip

We begin with the following simple observation.

\begin{prop}\label{J3}
Let $K\subseteq\bbD$ be a compact set.  Assuming the same hypotheses stated in Theorem \ref{J1}, let $\tau_K(n)$ be the number of zeros of $p_n$ in $K$.  Then $\tau_K(n)$ is bounded as a function of $n$.
\end{prop}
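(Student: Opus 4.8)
The plan is to exploit the fact that the polynomials $p_nf$ are the orthogonal projections of the constant function $1$ onto the nested subspaces $f\cdot\mcp_n$, and hence converge, first in norm and then locally uniformly, to the fixed analytic function $W_f$. Indeed, since $p_nf$ minimizes $\|pf-1\|_\omega$ over $p\in\mcp_n$, it is exactly the projection of $1$ onto $f\cdot\mcp_n$; as $n\to\infty$ these subspaces increase to $\overline{f\cdot\mcp}$, so $\|p_nf-W_f\|_\omega\to0$ by the standard convergence of orthogonal projections onto an increasing family of subspaces. I would then upgrade this to uniform convergence on compact subsets of $\bbD$: because $H^2_\omega$ is a reproducing kernel Hilbert space whose diagonal kernel $\sum_n|z|^{2n}/\omega_n$ is finite and locally bounded on $\bbD$ --- here condition \eqref{etacond}, which forces $\omega_n^{1/n}\to1$, is exactly what guarantees convergence of this series for $|z|<1$ --- norm convergence implies locally uniform convergence. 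Thus $p_nf\to W_f$ uniformly on compact subsets of $\bbD$.

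Next I would record that $W_f\not\equiv0$. Since $f(0)\neq0$, the function $f$ lies in $\overline{f\cdot\mcp}$ and $\langle 1,f\rangle_\omega=\overline{f(0)}\neq0$, so $1$ is not orthogonal to $\overline{f\cdot\mcp}$ and its projection $W_f$ is nonzero. Being a nonzero analytic function on $\bbD$, $W_f$ has only isolated zeros there; in particular it has finitely many zeros in any closed disk $\{|z|\le r\}$ with $r<1$. Given the compact set $K\subset\bbD$, I would choose $r<1$ with $K\subseteq\{|z|\le r\}$ and such that $W_f$ has no zero on the circle $\Gamma:=\{|z|=r\}$, which is possible since the zeros of $W_f$ are isolated.

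With $\Gamma$ fixed, the conclusion follows from the argument principle together with the locally uniform convergence. On $\Gamma$ we have $|W_f|\ge\delta>0$, so for large $n$ the functions $p_nf$ are zero-free on $\Gamma$ and $(p_nf)'/(p_nf)\to W_f'/W_f$ uniformly there; consequently the integer
\[
\frac{1}{2\pi i}\oint_\Gamma\frac{(p_nf)'(z)}{(p_nf)(z)}\,dz
\]
converges to, and hence eventually equals, the number $N_W$ of zeros of $W_f$ inside $\Gamma$. Since $p_n$ is a polynomial and $f$ is analytic in $\bbD$, the product $p_nf$ has no poles inside $\Gamma$, so the left-hand integer is the number of zeros of $p_n$ inside $\Gamma$ plus the number $N_f$ of zeros of $f$ inside $\Gamma$, the latter being independent of $n$. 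Therefore the number of zeros of $p_n$ inside $\Gamma$ equals $N_W-N_f$ for all large $n$, a fixed finite quantity, and so $\tau_K(n)$, which is at most this count, is bounded in $n$.

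I expect the only genuinely delicate point to be the passage from norm convergence to locally uniform convergence of $p_nf$, and the correct bookkeeping of the zeros of $f$ when translating between zeros of $p_nf$ and zeros of $p_n$; the argument-principle step itself is routine once $W_f\not\equiv0$ is in hand. I would also note that the hypothesis that $W_f/f$ has a singularity on the unit circle plays no role in this proposition and is inherited only because we assume the full hypotheses of Theorem \ref{J1}; the essential inputs are $f(0)\neq0$ and condition \eqref{etacond}.
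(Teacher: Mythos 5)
Your proof is correct and follows essentially the same route as the paper: the paper likewise obtains locally uniform convergence of $p_nf$ to $W_f$ on compact subsets of $\bbD$ from the reproducing-kernel structure and then invokes Hurwitz's theorem, which is precisely the argument-principle step you carry out by hand on the circle $\Gamma$. Your explicit verification that $W_f\not\equiv0$ (via $\langle 1,f\rangle_\omega=\overline{f(0)}\neq0$) is a detail the paper leaves implicit (it is only established later, in Lemma \ref{J4}), but it does not constitute a different approach.
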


\begin{proof}
Since $H^2_{\omega}$ is a reproducing kernel Hilbert space, we conclude that $p_nf$ converges uniformly on compact subsets of $\bbD$ to $W_f$.  The conclusion follows from Hurwitz's Theorem (see \cite[Theorem 6.4.1]{Simon2a}).
\end{proof}

Let us now develop some tools we need for the proofs of Theorems \ref{J1} and \ref{J2}.  We begin with some observations about quasi-integral sequences $\omega$.  Let $\mu$ be a regular rotation invariant measure on $\bard$ that generates a norm equivalent to the norm generated by $\omega$.  Let $\varphi_n(z;\mu)$ be the degree
$n$ orthonormal polynomial for the measure $|f|^2\mu$ and let $\varphi_n(z;\omega)$ be the degree $n$ orthonormal polynomial for the space with inner product
\begin{align}\label{fin}
\langle\cdot f,\cdot f\rangle_\omega.
\end{align}
We denote their respective leading coefficients by $\kappa_n(\mu)$ and $\kappa_n(\omega)$ and let $P_n(z;\mu)=\varphi_n(z;\mu)/\kappa_n(\mu)$ and $P_n(z;\omega)=\varphi_n(z;\omega)/\kappa_n(\omega)$.

\begin{lemma}\label{J4}
Assuming the same hypotheses stated in Theorem \ref{J1}, $W_f(0)>0$.
\end{lemma}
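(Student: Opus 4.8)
The plan is to exploit directly the defining property of $W_f$ as the orthogonal projection of the constant function $1$ onto $\overline{f\cdot\mcp}$, together with the elementary observation that evaluation at the origin is encoded by the inner product. Specifically, because $\omega_0=1$, for any $g\in H^2_\omega$ with Maclaurin series $g(z)=\sum_n b_nz^n$ we have
\[
\langle g,1\rangle_\omega = b_0\,\omega_0 = g(0).
\]
This is the same identity already used in Section \ref{Intro} to show $\langle qf,1\rangle_\omega = q(0)f(0)$, and it will convert the quantity $W_f(0)$ we want to control into an inner product that the projection structure can handle.

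First I would use that $W_f\in\overline{f\cdot\mcp}$ and that $1-W_f$ is orthogonal to $\overline{f\cdot\mcp}$, so that testing against $g=W_f$ gives $\langle W_f,1\rangle_\omega = \langle W_f,W_f\rangle_\omega = \|W_f\|_\omega^2$. Combining this with the evaluation identity above yields the clean formula
\[
W_f(0) = \|W_f\|_\omega^2 \geq 0.
\]
This already shows $W_f(0)$ is real and nonnegative; the only remaining point is to upgrade nonnegativity to strict positivity, which by the displayed equality is equivalent to showing $W_f\neq 0$. For this I would argue that $1$ has a nonzero component along $\overline{f\cdot\mcp}$: since $f\in\overline{f\cdot\mcp}$ and $\langle f,1\rangle_\omega = f(0)\neq 0$ by hypothesis, the constant $1$ cannot be orthogonal to $\overline{f\cdot\mcp}$, so its projection $W_f$ is nonzero. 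Hence $W_f(0)=\|W_f\|_\omega^2>0$.

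I do not expect a serious obstacle here; the argument is short and rests only on the projection identity and $f(0)\neq0$. The subtle point worth stating carefully is the direction of the orthogonality relation and the role of $\omega_0=1$, which is precisely what forces $\langle W_f,1\rangle_\omega$ to reproduce $W_f(0)$ rather than some weighted combination. It is also worth emphasizing that the quasi-integrality hypothesis and the singularity assumption on $W_f/f$ carried in from Theorem \ref{J1} are not actually needed for this lemma; the only hypothesis used is $f(0)\neq0$. This clean identity $W_f(0)=\|W_f\|_\omega^2$ is likely to be reused in the proofs of Theorems \ref{J1} and \ref{J2}, so I would record it explicitly rather than merely asserting positivity.
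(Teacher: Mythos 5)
Your proof is correct, and it takes a genuinely different route from the paper. The paper's argument runs through the optimal approximants themselves: since $p_nf\to W_f$ in norm and $H^2_\omega$ is a reproducing kernel Hilbert space, $p_n(0)f(0)\to W_f(0)$, and then the explicit formula \eqref{optortho} gives $W_f(0)=|f(0)|^2\sum_{k=0}^{\infty}|\varphi_k(0;\omega)|^2>0$, the $k=0$ term being nonzero. Your argument bypasses the approximants and the orthonormal polynomials entirely: the projection identity $\langle W_f,1\rangle_\omega=\|W_f\|_\omega^2$ together with $\omega_0=1$ (so that $\langle g,1\rangle_\omega=g(0)$) yields the intrinsic formula $W_f(0)=\|W_f\|_\omega^2$, and strict positivity follows because $\langle f,1\rangle_\omega=f(0)\neq0$ shows $1$ is not orthogonal to $\overline{f\cdot\mcp}$, hence $W_f\neq0$. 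Each approach has its merits. Yours is more elementary and makes transparent, as you note, that only $f(0)\neq0$ is used, with quasi-integrality and the singularity hypothesis playing no role (this is equally true of the paper's proof, but less visibly so). The paper's computation, on the other hand, produces the quantitative series representation $W_f(0)=|f(0)|^2\sum_{k\geq0}|\varphi_k(0;\omega)|^2$, which ties $W_f(0)$ to exactly the data $\varphi_n(0;\omega)$ that drive the subsequent Jentzsch-type analysis (for instance the claim \eqref{jclaim} on $\limsup_{\nri}|\varphi_n(0)|^{1/n}$), so it sits more naturally in the flow of Section \ref{Jentzsch}. Combining the two proofs gives the pleasant identity $\|W_f\|_\omega^2=|f(0)|^2\sum_{k=0}^{\infty}|\varphi_k(0;\omega)|^2$, which is worth recording.
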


\begin{proof}
We have
\[
\lim_{\nri}\|p_nf-W_f\|_\omega=0.
\]
Since we are in a reproducing kernel Hilbert space, this implies
\[
\lim_{\nri}p_n(0)f(0)=W_f(0).
\]
By (\ref{optortho}), this gives
\[
W_f(0)=\lim_{\nri}  |f(0)|^2\sum_{k=0}^{n}|\varphi_k(0;\omega)|^2= |f(0)|^2\sum_{k=0}^{\infty}|\varphi_k(0;\omega)|^2>0
\]
as desired.
\end{proof}

We will need the following lemma, which depends on the rotation invariance of $\mu$.

\begin{lemma}\label{regular2}
The measure $|f|^2\mu$ is regular on $\bard$.
\end{lemma}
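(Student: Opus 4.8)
The plan is to show that $|f|^2 \mu$ is regular by using the fact that $\mu$ is regular together with the rotation invariance of $\mu$, which forces the leading-coefficient asymptotics of the orthonormal polynomials to be insensitive to multiplication by $|f|^2$. Recall from Section \ref{reg} that, since $\mu$ is supported on $\bard$ (a closed disk of radius $r=1$), regularity is equivalent to the leading coefficient condition $\lim_{\nri}\kappa_n(\mu)^{1/n}=1$. Thus it suffices to prove that $\lim_{\nri}\kappa_n(|f|^2\mu)^{1/n}=1$, where $\kappa_n(|f|^2\mu)$ denotes the leading coefficient of the $n$-th orthonormal polynomial for the measure $|f|^2\mu$. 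Since the capacity of $\bard$ is $1$, general lower bounds from potential theory give $\liminf_{\nri}\kappa_n(|f|^2\mu)^{1/n}\geq 1$ automatically (this is the ``easy'' direction of regularity, valid for any measure on $\bard$). So the real content is the matching upper bound $\limsup_{\nri}\kappa_n(|f|^2\mu)^{1/n}\leq 1$.

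For the upper bound I would exploit the extremal property of monic orthogonal polynomials stated in Section \ref{zero}, namely that $\|P_n\|_{L^2(\nu)}=\inf\|z^n+\text{lower order}\|_{L^2(\nu)}$ and that $\kappa_n(\nu)=1/\|P_n(\cdot;\nu)\|_{L^2(\nu)}$ for any measure $\nu$. Using this, $\kappa_n^{1/n}$ is comparable to $\|P_n\|_{L^2(\nu)}^{-1/n}$, so I need a lower bound on the $L^2(|f|^2\mu)$-norm of the monic extremal polynomial. The key tool is rotation invariance of $\mu$: writing $f(z)=\sum a_k z^k$ and expanding, rotation invariance of $\mu$ makes distinct powers $z^j, z^k$ orthogonal in $L^2(\mu)$, and one can estimate $\int |Q|^2 |f|^2 d\mu$ from below by isolating a single Fourier mode. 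Concretely, for a monic polynomial $Q$ of degree $n$, I would write $Q(z)f(z)$ as a power series whose $z^{n+m}$ coefficient is $a_m$ (the lowest nonvanishing Taylor coefficient of $f$, say $a_{m_0}\neq 0$ since $f\not\equiv 0$), and then use rotation invariance to bound $\int|Qf|^2 d\mu \geq |a_{m_0}|^2 \int |z|^{2(n+m_0)}d\mu$. Comparing this with the corresponding bound for $\mu$ alone yields $\|P_n(\cdot;|f|^2\mu)\|_{L^2(|f|^2\mu)}^{1/n}\geq c^{1/n}\cdot (\text{comparable growth})$, whose $n$-th root tends to the same limit as for $\mu$; combined with the matching bound coming from $\mu$ being regular, this pins down the exponential growth rate.

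The cleanest route, which I expect to make the argument short, is to leverage Stahl--Totik's comparison theory directly: regularity is preserved under multiplication of the measure by a weight that is bounded away from $0$ and $\infty$ on the support, but here $|f|^2$ can degenerate (vanish where $f$ does) and blow up, so that crude comparison fails. Instead I would invoke the domination/regularity criteria from Chapters 3--4 of \cite{StaTo}: since $\mu$ is regular and $f$ is a fixed nonzero analytic function, the measure $|f|^2\mu$ differs from $\mu$ by a weight that is nonzero $\mu$-a.e.\ and locally integrable, and regularity is a property of the ``bulk'' exponential growth rate that such a weight cannot change. The main obstacle will be handling the possible vanishing of $f$ on the circle (or the support of $\mu$) cleanly, so that the lower Fourier-mode estimate above is not destroyed; rotation invariance is exactly what rescues this, because it decouples the angular and radial variables and prevents the zeros of $f$ from conspiring against all high-degree polynomials simultaneously. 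Once the growth rate of $\kappa_n(|f|^2\mu)$ is shown to equal that of $\kappa_n(\mu)$, namely $\lim_{\nri}\kappa_n(|f|^2\mu)^{1/n}=1$, regularity of $|f|^2\mu$ on $\bard$ follows from the characterization recalled in Section \ref{reg}.
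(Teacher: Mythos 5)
Your framing is correct (regularity on $\bard$ reduces to $\limsup_{\nri}\kappa_n(|f|^2\mu)^{1/n}\leq 1$, the lower bound being the easy direction, which the paper gets from the extremal property), but both mechanisms you propose for the hard inequality break down. The single-Fourier-mode estimate is false as stated: for a monic $Q(z)=z^n+q_{n-1}z^{n-1}+\cdots+q_0$, the coefficient of $z^{n+m_0}$ in $Qf$ is $a_{m_0}+q_{n-1}a_{m_0+1}+\cdots+q_0a_{n+m_0}$, not $a_{m_0}$. Annihilating any one fixed mode is a single linear condition on the $n$ free coefficients of $Q$, so a monic competitor (and possibly the minimizer $P_n(\cdot;|f|^2\mu)$ itself, which is exactly the polynomial you must bound from below) can kill it; your inequality $\int|Qf|^2\,d\mu\geq|a_{m_0}|^2\int|z|^{2(n+m_0)}d\mu$ therefore fails. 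It would be valid only if $f$ were a polynomial, where the top mode $n+\deg f$ has coefficient exactly the leading coefficient of $f$ --- but a general $f\in H^2_\omega$ is an infinite series with no top mode. Your fallback, that by Stahl--Totik ``comparison/domination'' a weight which is nonzero $\mu$-a.e.\ cannot change the $n$-th root growth rate, is not a theorem: regularity is in general not preserved under multiplication by a merely a.e.-positive weight, and no criterion in \cite{StaTo} applies off the shelf here. Your closing remark that rotation invariance ``decouples the radial and angular variables'' is the right intuition, but it is never converted into an argument.

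What the paper actually does is split into two cases according to how $\mu$ places mass near the boundary (that mass accumulates at $|z|=1$ is forced by the norm equivalence with $\|\cdot\|_\omega$ and \eqref{etacond}). If there are circles $C_n=\{|z|=r_n\}$ with $r_n\to1$ and $\mu(C_n)>0$, rotation invariance gives $\mu\geq\delta_n\chi_{C_n}\frac{d|z|}{2\pi r_n}$; since $f$ vanishes only on a Lebesgue-null subset of each $C_n$ --- and when $r_n=1$ this requires the boundary uniqueness theorem (\cite[Theorem 17.18]{Rudin}), an input entirely absent from your sketch --- the Erd\"{o}s--Tur\'{a}n criterion makes the weighted measure regular on $C_n$, and the monotonicity $\kappa_m(|f|^2\mu)\leq\kappa_m(|f|^2\mu_n)$ yields $\limsup_m\kappa_m(|f|^2\mu)^{1/m}\leq 1/r_n\to1$. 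Otherwise one chooses annuli $A_n=\{z:r_n\leq|z|\leq R_n\}$ of positive $\mu$-mass with $r_n\to1$, arranged (using that the zeros of $f$ are isolated in $\bbD$) so that $|f|\geq\delta_n$ on $A_n$; then, because monomials are the monic orthogonal polynomials of any rotation-invariant measure, $\int_{A_n}|P_m|^2d\mu\geq\int_{A_n}|z|^{2m}d\mu\geq r_n^{2m}\mu(A_n)$, whence $\liminf_m\|P_m\|_{L^2(|f|^2\mu)}^{1/m}\geq r_n\to1$. Note that the extremality is applied to the monic polynomial $P_m$ itself after peeling off $|f|^2\geq\delta_n^2$, not to the series $P_mf$ --- this is the correct replacement for your single-mode bound. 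Without the case analysis, the boundary-values theorem, and the annulus/monomial argument, your proposal has a genuine gap at precisely the step where rotation invariance must be used quantitatively.
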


\begin{proof}
First note that the extremal property satisfied by the monic orthogonal polynomials (see Section \ref{zero}) implies
\[
\kappa_n(\mu)\geq1,
\]
so we only need to verify the reverse inequality for the $n^{th}$-root asymptotics.  We split the proof into two cases.

\smallskip

\noindent$\cdot$ \underline{Case 1}:  Suppose there exists a non-decreasing sequence $\{r_n\}_{n\in\bbN}$ such that $r_n\rightarrow1$ as $\nri$ and so that $\mu(\{z:|z|=r_n\})>0$ for each $n\in\bbN$.

\smallskip

Let $C_n:=\{z:|z|=r_n\}$.  Then there exists a sequence of positive constants $\{\delta_n\}_{n\in\bbN}$ such that
\[
\mu\geq\mu_n:=\delta_n\chi_{C_n}(z)\frac{d|z|}{2\pi r_n}.
\]
Here we have used that $\mu$ is rotationally invariant. We know that $|f|\neq0$ Lebesgue almost everywhere on $C_n$ (see \cite[Theorem 17.18]{Rudin}) and so it follows from the Erd\"{o}s-Tur\'{a}n criterion that $|f|^2\mu_n$ is a regular measure on $C_n$ for every $n$ (see \cite[Theorem 4.1.1]{StaTo}).  Therefore
\[
\lim_{m \rightarrow \infty} \kappa_m(\mu_n)^{1/m}=\frac{1}{r_n}.
\]
Since $\kappa_m(\mu)\leq\kappa_m(\mu_n)$, we have
\[
\limsup_{m\rightarrow\infty}\kappa_m(\mu)^{1/m}\leq\frac{1}{r_n}.
\]
 Since $r_n$ converges to $1$ as $\nri$, this gives the desired upper bound.

\smallskip

\noindent$\cdot$ \underline{Case 2}:  Suppose there does not exist a
sequence $\{r_n\}_{n\in\bbN}$ as in Case 1.

\smallskip

In this case, there exists a pair of sequences $\{r_n\}_{n\in\bbN}$
and $\{R_n\}_{n\in\bbN}$ in the interval $(0,1)$ so that
\begin{itemize}
\item $r_n<R_n$;
\item $\lim_{\nri}r_n=1$;
\item the intervals $\{[r_n,R_n]\}_{n\in\bbN}$ are pairwise disjoint;
\item $\mu(\{z:r_n\leq|z|\leq R_n\})>0$ for each $n\in\bbN$;
\item there exists a sequence of positive constants $\{\delta_n\}_{n\in\bbN}$ such that $|f(z)|\geq\delta_n$ when $r_n\leq|z|\leq R_n$.
 \end{itemize}
 Then we have
\begin{align*}
\|P_m(z,\mu)\|^2_{L^2(|f|^2\mu)}&=\int_{\bard}|P_m(z;\mu)f(z)|^2d\mu\geq\int_{\{z:r_n\leq|z|\leq R_n\}}|P_m(z;\mu)f(z)|^2d\mu\\
&\geq\delta_n\int_{\{z:r_n\leq|z|\leq R_n\}}|P_m(z;\mu)|^2d\mu\geq\delta_n\int_{\{z:r_n\leq|z|\leq R_n\}}|z|^{2m}d\mu\\
&\geq\delta_nr_n^{2m}\mu(\{z:r_n\leq|z|\leq R_n\}).
\end{align*}
Note that in the penultimate step, we are using the rotation invariance of $\mu$ to conclude that monomials are the monic orthogonal polynomials over the annulus, and therefore they satisfy the minimal $L^2$ norm property there (see Section \ref{zero}).  Now take both sides of this inequality to the $1/(2m)$ power, send $m\rightarrow\infty$, and recall that $r_n$ can be taken arbitrarily close to $1$ to get that $\liminf \left( \|P_m(z,\mu)\|^2_{L^2(|f|^2\mu)} \right)^{1/m} \geq 1$, which is equivalent to what we had to prove.
\end{proof}

\noindent\textit{Remark.}  In Case 1 of the above proof, we allow for the possibility that $\mu$ is supported on the unit circle (i.e., $r_n\equiv1$), which is why we need to appeal to the uniqueness theorem for boundary values of analytic functions (\cite[Theorem 17.18]{Rudin}).

\bigskip

With Lemma \ref{regular2} in hand, the extremal property from
Section \ref{zero} implies
\begin{align*}
\kappa_n(\omega)^{-1}&=\|P_n(\cdot;\omega)f\|_\omega\asymp\|P_n(\cdot;\omega)f\|_{L^2(\mu)}\geq\|P_n(\cdot;\mu)f\|_{L^2(\mu)}=\kappa_n(\mu)^{-1},\\
\kappa_n(\mu)^{-1}&=\|P_n(\cdot;\mu)f\|_{L^2(\mu)}\asymp\|P_n(\cdot;\mu)f\|_{\omega}\geq\|P_n(\cdot;\omega)f\|_{\omega}=\kappa_n(\omega)^{-1}.
\end{align*}
From this and the regularity of $|f|^2\mu$, it follows (see Section
\ref{reg}) that
\begin{align}\label{newkap}
\lim_{\nri}\kappa_n(\omega)^{1/n}=1.
\end{align}

The equivalence of norms also implies that
\[
\lim_{\nri}\|\varphi_n(\omega)\|^{1/n}_{L^2(\mu)}=1,
\]
and so by \cite[page 66]{StaTo}, we have
\begin{align}\label{newout}
\limsup_{\nri}|\varphi_n(z;\omega)|^{1/n}\leq|z|
\end{align}
uniformly on compact subsets of $\bbC\setminus\bard$.  The same
result also tells us that
\begin{align}\label{newzero}
\limsup_{\nri}|\varphi_n(0;\omega)|^{1/n}\leq1.
\end{align}
 We now turn to the proofs of Theorems \ref{J1} and \ref{J2}.  Note that these proofs don't require the full power of the regularity of $\mu$, but only the relations (\ref{newkap}), (\ref{newout}), and (\ref{newzero}).

\begin{proof}[Proof of Theorem \ref{J1}]
Let $\mu$ be the regular rotation invariant measure on $\bard$ that gives rise to a norm equivalent to the one induced by $\omega$ and let $\{\varphi_n(z)\}_{n\geq0}$ be the orthonormal polynomials for the inner product (\ref{fin}).  We recall from Section \ref{Intro} that, denoting $z_{j,n}$ the zeros of $p_n$, the $n$-th optimal approximant of $1/f$,
\begin{align}\label{pn1}
p_n(z)=\overline{f(0)}\sum_{k=0}^{n}\overline{\varphi_k(0)}\varphi_k(z)=:\overline{f(0)}\overline{\varphi_N(0)}\kappa_N\prod_{j=1}^{N}(z-z_{j,n}),
\end{align}
where $N=\mbox{deg}(p_n)$ and $\kappa_N$ is the leading coefficient of $\varphi_N$.  Since $W_f/f$ has a singularity on the unit circle, it cannot happen that $qf=W_f$ for a polynomial $q$, so after passing to a subsequence if necessary, we may assume that $N=n$.

From Lemma \ref{J4} and equations (\ref{pn1}) and (\ref{newkap}), we find that
\[
1=\lim_{\nri}|W_f(0)/f(0)|^{1/n}=\lim_{\nri}|p_n(0)|^{1/n}=\lim_{\nri}|\varphi_n(0)|^{1/n}\left(\prod_{j=1}^n|z_{j,n}|\right)^{1/n}.
\]
We claim that:
\begin{align}\label{jclaim}
\limsup_{\nri}|\varphi_n(0)|^{1/n}=1.
\end{align}
To prove this claim, assume that
\[
\limsup_{\nri}|\varphi_n(0)|^{1/n}=\gamma<1
\]
(by (\ref{newzero}) we do not need to consider the case $\gamma>1$).  The inequality (\ref{newout}) implies that
$\left\{p_n(z)\right\}_{n\in\bbN}$ is a normal family on the open disk centered at zero and radius $1/\gamma$.  Let $p$ be a limit function so that $p_nf$ converges to $pf$ uniformly on compact subsets of $\bbD$ as $\nri$.  We also have $p_nf\rightarrow W_f$ pointwise in $\bbD$ as $\nri$, so $pf=W_f$ and so $p$ provides an analytic continuation of $W_f/f$ to a disk of radius larger than one, which we are assuming is impossible.  This contradiction proves the claim.

From the claim and Proposition \ref{J3}, it follows that
\begin{equation}\label{liminf}
\liminf_{\nri}\left(\prod_{j=1}^n|z_{j,n}|\right)^{1/n}=1.
\end{equation}
To finish the proof, assume that $\limsup_{n \rightarrow \infty} \frac{\tau_{\varepsilon}(n)}{n} = \gamma < 1.$  Then, if we fix $\delta > 0,$ we infer from Proposition \ref{J3} that for large $n$,
$$ \left(\prod_{j=1}^n|z_{j,n}|\right)^{1/n} \geq (1- \delta)^{\frac{\tau_{\varepsilon}(n)}{n}} (1 + \varepsilon)^{\frac{n-\tau_{\varepsilon}(n)}{n}}.$$  Restricting ourselves to a subsequence where the $\liminf$ in \eqref{liminf} is attained,
since $\delta>0$ was arbitrary, letting $n \rightarrow \infty$, we obtain that $1 \geq (1+ \varepsilon)^{1 - \gamma}$, an obvious contradiction.  The proof is now complete.
%Invoking Proposition \ref{J3} again proves the result.
\end{proof}

\bigskip

The proof of Theorem \ref{J1} gives an additional useful result,
which we state as the following corollary.

\begin{corollary}\label{when}
In the notation of the proof of Theorem \ref{J1}, if $\omega$ is
quasi-integral and $\mcn\subseteq\bbN$ is a subsequence so that
\[
\lim_{{\nri}\atop{n\in\mcn}}|\varphi_n(0)|^{1/n}=1,
\]
then for any $\varepsilon>0$
\[
\lim_{{\nri}\atop{n\in\mcn}}\frac{\tau_{\varepsilon}(n)}{n}=1
\]
and
\begin{align}\label{pone}
\lim_{{\nri}\atop{n\in\mcn}}\left(
\prod_{\{v:p_n(v)=0\}}|v|\right)^{\frac{1}{n}}=1.
\end{align}
\end{corollary}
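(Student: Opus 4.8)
The plan is to read off both conclusions from the apparatus already built in the proof of Theorem \ref{J1}, simply replacing the ``full sequence'' limits there by limits along $\mcn$. The starting point is the factorization (\ref{pn1}): whenever $\deg(p_n)=n$ we have
\[
|p_n(0)|=|f(0)|\,|\varphi_n(0)|\,\kappa_n\prod_{j=1}^n|z_{j,n}|.
\]
Note first that the hypothesis $\lim_{n\in\mcn}|\varphi_n(0)|^{1/n}=1$ forces $\varphi_n(0)\neq0$, hence $\deg(p_n)=n$, for all but finitely many $n\in\mcn$, so this identity is indeed available along $\mcn$.

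First I would establish (\ref{pone}). By Lemma \ref{J4} and the reproducing-kernel convergence $p_n(0)f(0)\to W_f(0)$ used in its proof, $p_n(0)$ tends to the fixed nonzero constant $W_f(0)/f(0)$, so $|p_n(0)|^{1/n}\to1$. Taking $n$-th roots in the displayed identity and using $|f(0)|^{1/n}\to1$ together with (\ref{newkap}), which gives $\kappa_n^{1/n}\to1$, yields
\[
1=\lim_{{\nri}\atop{n\in\mcn}}|\varphi_n(0)|^{1/n}\Big(\prod_{j=1}^n|z_{j,n}|\Big)^{1/n}.
\]
Inserting the hypothesis $|\varphi_n(0)|^{1/n}\to1$ along $\mcn$ gives at once that $(\prod_{j=1}^n|z_{j,n}|)^{1/n}\to1$, which is precisely (\ref{pone}) since $\prod_{\{v:\,p_n(v)=0\}}|v|=\prod_{j=1}^n|z_{j,n}|$ when $\deg(p_n)=n$.

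It then remains to upgrade this geometric-mean statement to $\tau_\varepsilon(n)/n\to1$, and here I would re-run the contradiction argument from the end of Theorem \ref{J1} along $\mcn$. Since trivially $\tau_\varepsilon(n)\le n$, it is enough to exclude $\liminf_{n\in\mcn}\tau_\varepsilon(n)/n=\gamma<1$; passing to a sub-subsequence $\mcn'\subseteq\mcn$ realizing $\gamma$, fix $\delta\in(0,1)$ and split the $n$ zeros of $p_n$ into those with $|z_{j,n}|>1+\varepsilon$ (there are $n-\tau_\varepsilon(n)$ of them, each exceeding $1+\varepsilon$), those in the annulus $\{1-\delta\le|z|\le1+\varepsilon\}$ (each at least $1-\delta$), and those in the compact set $\{|z|\le1-\delta\}\subset\bbD$. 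By Proposition \ref{J3} the last group has cardinality bounded by some $M_\delta$ independent of $n$. This leads to a lower bound of the form
\[
\Big(\prod_{j=1}^n|z_{j,n}|\Big)^{1/n}\ge(1-\delta)^{\tau_\varepsilon(n)/n}(1+\varepsilon)^{(n-\tau_\varepsilon(n))/n}\,\rho^{M_\delta/n},
\]
and letting $n\to\infty$ along $\mcn'$ (so that the left side tends to $1$ by the previous paragraph and the last factor tends to $1$) followed by $\delta\to0$ gives $1\ge(1+\varepsilon)^{1-\gamma}>1$, a contradiction. Hence the $\liminf$ equals $1$ and, with the trivial upper bound, $\lim_{n\in\mcn}\tau_\varepsilon(n)/n=1$.

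The one point I would treat carefully, and which I expect to be the only genuine obstacle, is the handling of the boundedly many zeros sitting in $\{|z|\le1-\delta\}$, since a priori their moduli could be small enough to spoil the geometric mean. This is controlled by the factor $\rho^{M_\delta/n}$ above, where $\rho>0$ is a uniform lower bound for all zeros of $p_n$ valid for large $n$: because $W_f(0)\neq0$ by Lemma \ref{J4} and $p_nf\to W_f$ uniformly on a neighborhood of $0$, Hurwitz's Theorem (as in Proposition \ref{J3}) forces $p_n$ to be zero-free on a fixed disk about the origin for all large $n$. With only $M_\delta$ such zeros, each bounded below by $\rho$, their contribution to the $n$-th root is $\rho^{M_\delta/n}\to1$, so the splitting bound survives the limit as claimed.
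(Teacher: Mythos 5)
Your proof is correct and takes essentially the same route as the paper, which offers no separate argument for Corollary \ref{when} beyond the remark that it follows from the proof of Theorem \ref{J1}: you re-run exactly that proof along $\mcn$, extracting \eqref{pone} from the factorization \eqref{pn1} together with \eqref{newkap} and Lemma \ref{J4}, and then repeating the splitting/contradiction argument for $\tau_\varepsilon(n)/n$. Your additional factor $\rho^{M_\delta/n}$, justified by Hurwitz's Theorem near the origin, carefully patches a point the paper's displayed inequality glosses over (Proposition \ref{J3} bounds only the \emph{number} of zeros in $\{z:|z|\le 1-\delta\}$, not their moduli from below), and it matches how the paper itself absorbs such zeros into an $n$-independent constant in the proof of Lemma \ref{omit}.
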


We now turn our attention to the proof of Theorem \ref{J2}.  Heuristically, the proof of this fact is a simple application of \cite[Lemma 4.3b]{MSS}, but there are three issues we must address.  The first is the fact that it is possible that $\varphi_n(0)=0$ for some values of $n$ (we retain the notation from the proof of Theorem \ref{J1}), so we may not have a sequence of polynomials of every degree.  One can see from the proof of \cite[Lemma 4.3b]{MSS} that
this is not an essential assumption. The second issue is that $p_n$ is not monic, but (\ref{newkap}) and (\ref{jclaim}) tell us that we can divide by the leading coefficient and not change the $n^{th}$ root asymptotics of $p_n$, so this issue is easily dealt with also.  The third issue is more serious and requires us to exclude the possibility of zeros of $p_n$ accumulating to infinity.  It is easy to construct examples in the Hardy space $H^2$ where the zeros do
accumulate to infinity so one cannot rule this out in general.  Therefore, to complete the proof of Theorem \ref{J2} we require Lemma \ref{omit} below, which will allow us to disregard the zeros that may accumulate to infinity without meaningfully affecting the necessary estimates.  Our arguments will rely heavily on what we have already learned during the proof of Theorem \ref{J1}.

We begin by restricting our attention to a subsequence $\mcn\subseteq\bbN$ through which the limit superior in (\ref{jclaim}) is attained and also such that $\mbox{deg}(p_n)=n$ for all $n\in\mcn$.  For every $n\in\mcn$, define the polynomial $q_n$ by
\begin{align}\label{qdef2}
q_n(z)=\frac{p_n(z)}{\overline{\varphi_n(0)}\kappa_n\prod_{\{w:p_n(w)=0,\,|w|>2\}}(z-w)}.
\end{align}
Then $q_n(z)$ is monic and Corollary \ref{when} tells us that
\begin{equation}\label{qdeg}
\lim_{{\nri}\atop{n\in\mcn}}\frac{\mbox{deg}(q_n)}{n}=1.
\end{equation}
By thinning the subsequence $\mcn$ if necessary, we may assume that all of the polynomials $\{q_n\}_{n\in\mcn}$ have distinct degrees.  We will need the following lemma.

\begin{lemma}\label{omit}
Under the assumptions of Theorem \ref{J2}, with $q_n$ defined by
(\ref{qdef2}), we have
\[
\lim_{{\nri}\atop{n\in\mcn}}\left|\frac{q_n(z)}{p_n(z)}\right|^{1/n}=1,\qquad\qquad|z|\leq1
\]
and the convergence is uniform.
\end{lemma}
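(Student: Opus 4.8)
The plan is to compute the quotient $q_n/p_n$ explicitly and reduce the lemma to the $n$-th root asymptotics of a product taken over the ``far away'' zeros of $p_n$. Using the factorization \eqref{pn1} of $p_n$ together with the definition \eqref{qdef2}, every factor $(z-z_{j,n})$ with $|z_{j,n}|\le2$ cancels, as does the leading constant $\overline{f(0)}$, leaving
\[
\frac{q_n(z)}{p_n(z)}=\frac{1}{\overline{\varphi_n(0)}\,\kappa_n\prod_{\{w:\,p_n(w)=0,\,|w|>2\}}(z-w)}.
\]
Since $n\in\mcn$ was chosen so that $|\varphi_n(0)|^{1/n}\to1$ (the subsequence realizing \eqref{jclaim}) and $\kappa_n^{1/n}\to1$ by \eqref{newkap}, the lemma reduces to showing that $\Pi_n(z):=\bigl(\prod_{\{|w|>2\}}|z-w|\bigr)^{1/n}\to1$ uniformly on $\{|z|\le1\}$.

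Next I would record two elementary bounds valid for $|z|\le1$ and $|w|>2$: from $|w|-1\le|z-w|\le|w|+1$ and $|w|>2$ one gets $|z-w|\ge1$ and $|z-w|\le\tfrac32|w|$. The first bound shows each factor of $\Pi_n(z)$ is at least $1$, so $\Pi_n(z)\ge1$ and hence $\limsup_n|q_n/p_n|^{1/n}\le1$ immediately. For the reverse inequality the second bound gives $\Pi_n(z)\le(3/2)^{M_n/n}\bigl(\prod_{\{|w|>2\}}|w|\bigr)^{1/n}$, where $M_n$ is the number of zeros of $p_n$ with $|w|>2$. Applying Corollary \ref{when} with $\varepsilon=1$ gives $\tau_1(n)/n\to1$ along $\mcn$, so $M_n=n-\tau_1(n)=o(n)$ and the prefactor $(3/2)^{M_n/n}\to1$. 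Thus the whole lemma is reduced to the single statement $\bigl(\prod_{\{|w|>2\}}|w|\bigr)^{1/n}\to1$.

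The crux, and the step I expect to be the main obstacle, is this last product, which I would treat by writing
\[
\Bigl(\prod_{\{|w|>2\}}|w|\Bigr)^{1/n}=\frac{\bigl(\prod_{\{\text{all }w\}}|w|\bigr)^{1/n}}{\bigl(\prod_{\{|w|\le2\}}|w|\bigr)^{1/n}}.
\]
The numerator tends to $1$ by \eqref{pone}, so it suffices to show $\liminf_n\bigl(\prod_{\{|w|\le2\}}|w|\bigr)^{1/n}\ge1$; the difficulty is precisely that zeros inside $\bbD$ contribute factors smaller than $1$. Here I would invoke two facts: first, by the minimal-modulus identity from Section \ref{s-EP}, every zero of every optimal approximant satisfies $|w|\ge1/\mcu_\omega=:\rho_0>0$ (finite since $\mcu_\omega=\|\mcj_\omega\|/2<\infty$); second, by Proposition \ref{J3} the number of zeros of $p_n$ in any compact subset of $\bbD$ is bounded independently of $n$. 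Fixing $\eta\in(0,1)$ with $1-\eta>\rho_0$, I would split the zeros with $|w|\le2$ into those in $\{\rho_0\le|w|\le1-\eta\}$, of which there are at most $B_\eta$, each $\ge\rho_0$, contributing a factor $\ge\rho_0^{B_\eta/n}\to1$, and those in $\{1-\eta<|w|\le2\}$, each $\ge1-\eta$, contributing a factor $\ge(1-\eta)$ after taking $n$-th roots. This yields $\liminf_n\bigl(\prod_{\{|w|\le2\}}|w|\bigr)^{1/n}\ge1-\eta$, and letting $\eta\to0$ completes this step. (When $\mcu_\omega=1$ the argument is trivial, since then $\rho_0=1$ and all factors are already $\ge1$.)

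Combining the pieces gives $\bigl(\prod_{\{|w|>2\}}|w|\bigr)^{1/n}\to1$ and hence $\Pi_n(z)\to1$. Finally, uniformity is automatic: $\Pi_n(z)$ is sandwiched between the constant $1$ and the $z$-independent sequence $(3/2)^{M_n/n}\bigl(\prod_{\{|w|>2\}}|w|\bigr)^{1/n}$, both of which tend to $1$, so $\Pi_n\to1$ uniformly on $\{|z|\le1\}$. Together with $|\varphi_n(0)|^{1/n}\kappa_n^{1/n}\to1$ this gives $|q_n/p_n|^{1/n}\to1$ uniformly, as claimed.
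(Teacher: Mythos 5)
Your proposal is correct and follows essentially the same route as the paper: both reduce the lemma via \eqref{newkap} and the choice of $\mcn$ to the $n$-th root asymptotics of $\prod_{\{w:p_n(w)=0,\,|w|>2\}}|z-w|$, bound each factor between $1$ and a constant multiple of $|w|$, and then control $\bigl(\prod_{|w|>2}|w|\bigr)^{1/n}$ by combining \eqref{pone} (Corollary \ref{when}) with a lower bound on the product over the zeros of modulus at most $2$ obtained from Proposition \ref{J3}. The only cosmetic difference is that you make explicit the uniform lower bound $|w|\geq 1/\mcu_\omega$ on zero moduli from Section \ref{s-EP}, which the paper leaves implicit in its constant $C$, and you organize the key step as a quotient $\prod_{\mathrm{all}}/\prod_{|w|\leq 2}$ rather than the paper's $\chi$--$\varepsilon$ limsup manipulation; these are equivalent rearrangements of the same estimate.
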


\begin{proof}
By (\ref{newkap}) and the definition of $\mcn$, it suffices to show
that
\[
\lim_{{\nri}\atop{n\in\mcn}}\prod_{\{w:p_n(w)=0,\,|w|>2\}}|z-w|^{1/n}=1
\]
uniformly on $\bard$.  Since each term in the product has modulus at least $1$, it is clear that the limit inferior is at least $1$.  To obtain the upper-bound for the limit superior, note that for $z \in
\overline{\bbD}$,
\[
\prod_{\{w:p_n(w)=0,\,|w|>2\}}|z-w|\leq\prod_{\{w:p_n(w)=0,\,|w|>2\}}(1+|w|)\leq\prod_{\{w:p_n(w)=0,\,|w|>2\}}(2|w|),
\]
so it suffices to show that the $n^{th}$ root of this last product has limit superior at most one as $\nri$, $n \in \mcn$.

By Corollary \ref{when}, if $\varepsilon>0$ is given, then for all $n\in\mcn$ sufficiently large we have
\[
(1+\varepsilon)^n>\prod_{\{w:p_n(w)=0,\,|w|>2\}}|w| \prod_{\{u:p_n(u)=0,\,|u|\leq2\}}|u|.
\]
Take $\nri$ via a subsequence $\mcn_1\subseteq\mcn$ so that
\[
\lim_{{\nri}\atop{n\in\mcn_1}}\prod_{\{w:p_n(w)=0,\,|w|>2\}}|w|^{1/n}=\limsup_{{\nri}\atop{n\in\mcn}}\prod_{\{w:p_n(w)=0,\,|w|>2\}}|w|^{1/n}=:\chi
\]
(the limit (\ref{pone}) assures us that $\chi<\infty$). Substituting the above equality into the preceding estimate shows that if $n\in\mcn_1$ is sufficiently large, then
\[
(1+\varepsilon)^n\geq C(\chi-\varepsilon)^n(1-\varepsilon)^n
\]
for some constant $C>0$ that does not depend on $n$ (we used Proposition \ref{J3} here).  By taking $n^{th}$ roots of both sides and sending $\nri$ through $\mcn_1$, we get
\[
(1+\varepsilon)\geq(\chi-\varepsilon)(1-\varepsilon).
\]
Since $\varepsilon>0$ was chosen arbitrarily, this shows $\chi\leq1$.  Therefore,
\[
\limsup_{{\nri}\atop{n\in\mcn}}\prod_{\{w:p_n(w)=0,\,|w|>2\}}(2|w|)^{1/n}=\limsup_{{\nri}\atop{n\in\mcn}}2^{(n-\tau_1(n))/n}\prod_{\{w:p_n(w)=0,\,|w|>2\}}|w|^{1/n}=\chi\leq1
\]
by our definition of $\mcn$ and Corollary \ref{when}.
\end{proof}

\begin{proof}[Proof of Theorem \ref{J2}]
The proof of this theorem follows Jentzsch's original outline.  We use the same notation as in the proof of Theorem \ref{J1} and the proof of Lemma \ref{omit}.  Let $\delta>0$ be fixed.  The inequalities (\ref{newout}) and (\ref{newzero}) imply that there is a positive constant $C_\delta$ so that for all $n$ and all $\theta\in\bbR$ we have
\[
|\varphi_n(0)\varphi_n(\eitheta)|\leq C_\delta(1+\delta)^n.
\]
Therefore,
\[
|p_n(\eitheta)|\leq
|f(0)|C_\delta\sum_{m=0}^n(1+\delta)^m=|f(0)|C_\delta\frac{(1+\delta)^{n+1}-1}{\delta}.
\]
Taking $n^{th}$ roots, sending $\nri$, and noting that $\delta>0$
was chosen arbitrarily shows
\begin{align*}
\limsup_{\nri}|p_n(\eitheta)|^{1/n}\leq1.
\end{align*}
Lemma \ref{omit} implies that by choosing the subsequence $\mcn$ so that the hypotheses of Corollary \ref{when} are satisfied and defining $\{q_n\}_{n\in\mcn}$ by (\ref{qdef2}), we have
\begin{align}\label{qbound}
\limsup_{{\nri}\atop{n\in\mcn}}|q_n(\eitheta)|^{1/n}\leq1,\qquad\qquad\lim_{{\nri}\atop{n\in\mcn}}|q_n(0)|^{1/n}=1.
\end{align}

One can check that (\ref{qbound}) combined with \eqref{qdeg} and the fact that each $q_n$ is monic is sufficient to apply \cite[Lemma 4.3b]{MSS}, which tells us that the unique weak* limit point of the normalized zero counting measures of the polynomials $\{q_n\}_{n\in\mcn}$ as $\nri$ is the logarithmic equilibrium measure of the unit disk.  Since the
latter is normalized arc-length measure on the unit circle, the desired conclusion follows.
\end{proof}

\bigskip

\noindent\textbf{Example.} Consider the space $\mathcal{A}^2_{\beta}$ previously discussed, with a norm (and inner product) induced by $\omega$ as defined in \eqref{Bergmannorm}. It follows from the results in Chapters 3 and 4, in particular Corollary 4.1.7 and Section 4.2 in \cite{StaTo} that the associated measure is regular. It is easy to see that $f(z)=1-z$ is cyclic. This means $W_f=1$ and $W_f/f$ has a singularity at $1$.  Therefore, we can apply Theorems \ref{J1} and \ref{J2} and conclude that after restricting to an appropriate subsequence (if necessary), most of
the zeros of the optimal polynomial approximants to $1/f$ are near the unit circle and have arguments that accumulate to every point in $[0,2\pi]$.

Now consider the space $D_\alpha$ for $\alpha<0$.  It is easy to see that the spaces $\mathcal{A}^2_{\beta}$ and $D_\alpha$ have equivalent norms when $\beta=-(\alpha +1)$, so we can draw the same conclusions about optimal polynomial approximants to $1/f$ in the space $D_\alpha$ as we did in $\mathcal{A}^2_{\beta}$.

\bigskip

We conclude this section with the following theorem, which provides a very general set of conditions that indicate a sequence $\omega$ is quasi-integral.

\begin{theorem}\label{applyj}
Suppose
\[
\omega_n=h(n)g(n),
\]
where $g$ and $1/g$ are bounded on $[0,\infty)$, $g$ satisfies $g(n+1)/g(n)\rightarrow1$ as $\nri$, and $h$ satisfies
\[
h(r)=\int_0^{\infty}e^{-rt}d\gamma(t),\qquad\qquad r>0
\]
for some probability measure $\gamma$ on $[0,\infty)$ with $0\in\supp(\gamma)$.  Then the sequence $\{\omega_n\}_{n=0}^{\infty}$ is quasi-integral.
\end{theorem}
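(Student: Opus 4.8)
The plan is to exhibit an explicit regular rotation-invariant probability measure $\mu_0$ on $\bard$ whose associated weight sequence is exactly $\{h(n)\}_{n\ge0}$; since $g$ and $1/g$ are bounded, the weight $\omega_n=h(n)g(n)$ is comparable to $h(n)$, so the norm induced by $\omega$ is equivalent to $\|\cdot\|_{L^2(\mu_0)}$, which is precisely the statement that $\omega$ is quasi-integral.

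First I would convert the Laplace-transform representation of $h$ into a moment sequence. Substituting $s=e^{-t}$ pushes $\gamma$ forward under the homeomorphism $t\mapsto e^{-t}$ of $[0,\infty)$ onto $(0,1]$ to a probability measure $\tilde\gamma$ on $(0,1]$, and then
\[
h(n)=\int_{[0,\infty)}e^{-nt}\,d\gamma(t)=\int_{(0,1]}s^n\,d\tilde\gamma(s),\qquad n\ge0.
\]
Because $0\in\supp(\gamma)$ and $t\mapsto e^{-t}$ sends $0$ to $1$, we have $1\in\supp(\tilde\gamma)$; this single fact drives the whole argument. Next, letting $\nu$ be the pushforward of $\tilde\gamma$ under $s\mapsto\sqrt{s}$ (a probability measure on $(0,1]$), I define $\mu_0$ on $\bard$ in polar coordinates $z=re^{i\theta}$ by $d\mu_0=d\nu(r)\otimes\frac{d\theta}{2\pi}$. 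Expanding $f=\sum a_nz^n$ and integrating in $\theta$ kills the cross terms, so
\[
\int_{\bard}|f|^2\,d\mu_0=\sum_{n\ge0}|a_n|^2\int_0^1 r^{2n}\,d\nu(r)=\sum_{n\ge0}|a_n|^2 h(n),
\]
i.e. $\mu_0$ generates the weight $\{h(n)\}$ exactly. Since $0<\inf g\le\sup g<\infty$, the two weights $\omega_n$ and $h(n)$ are comparable, giving the required norm equivalence.

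It remains to check that $\mu_0$ is regular, which I would do through the characterization recorded in Section \ref{reg}. Because $\mu_0$ is rotation invariant, the monomials are mutually orthogonal, so $z^n$ is the monic orthogonal polynomial of degree $n$ and $\varphi_n(z)=z^n/\sqrt{h(n)}$, whence the leading coefficient is $\kappa_n=h(n)^{-1/2}$. The condition $1\in\supp(\tilde\gamma)$ forces $\sup\supp(\tilde\gamma)=1$: for each $\varepsilon>0$ one has $\tilde\gamma((1-\varepsilon,1])>0$, so $h(n)\ge(1-\varepsilon)^n\,\tilde\gamma((1-\varepsilon,1])$ while trivially $h(n)\le1$; taking $n$th roots and letting $\varepsilon\to0$ gives $\lim_{\nri}h(n)^{1/n}=1$, hence $\lim_{\nri}\kappa_n^{1/n}=1$. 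Since $1\in\supp(\nu)$ the support of $\mu_0$ contains the unit circle, so the relevant radius in Section \ref{reg} is $r=1$, and the cited characterization certifies that $\mu_0$ is regular. The same moment-ratio reasoning — the ratios $h(n+1)/h(n)=\int s^{n+1}d\tilde\gamma/\int s^n d\tilde\gamma$ are nondecreasing and their geometric means equal $h(n)^{1/n}\to1$, so they increase to $1$ — together with $g(n+1)/g(n)\to1$ shows $\omega_n/\omega_{n+1}\to1$, so $\omega$ satisfies \eqref{etacond} (after the harmless normalization making $\omega_0=1$).

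I expect the only delicate point to be the regularity step, and more precisely the verification that the capacity of $\supp(\mu_0)$ equals $1$ so that the root asymptotics $\lim_{\nri}\kappa_n^{1/n}=1$ genuinely certify regularity on $\bard$; this is exactly where the hypothesis $0\in\supp(\gamma)$ is indispensable, since without it one would have $\sup\supp(\tilde\gamma)<1$, the support of $\mu_0$ would retreat into a strictly smaller disk, and both \eqref{etacond} and the boundary behaviour needed for the Jentzsch theorems of Section \ref{Jentzsch} would fail. Everything else — the change of variables, the orthogonality of monomials, and the comparison $\omega_n\asymp h(n)$ — is routine.
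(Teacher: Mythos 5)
Your proof is correct, and its overall architecture coincides with the paper's: both constructions build a rotation-invariant measure $d\tilde\nu(r)\otimes\frac{d\theta}{2\pi}$ on $\bard$ whose radial moments reproduce $\{h(n)\}_{n\geq0}$ exactly, reduce $\omega_n=h(n)g(n)$ to this case via the boundedness of $g$ and $1/g$, and derive regularity from the fact that $1$ lies in the support of the radial measure. You differ in two local steps, both in the direction of economy. First, the paper obtains the moment representation abstractly: it notes that $h$, being a Laplace transform of a positive measure, is completely monotone, hence $\{h(n)\}_{n\geq0}$ is a completely monotone sequence, and it invokes the solution of the Hausdorff moment problem to produce the measure $\nu$ on $[0,1]$; it then deduces $1\in\supp(\nu)$ indirectly, from the observation that $0\in\supp(\gamma)$ prevents exponential decay of $h(n)$. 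Your pushforward under $s=e^{-t}$ produces the moment measure and the inclusion $1\in\supp(\tilde\gamma)$ in one stroke, with no appeal to Hausdorff's theorem -- a genuine simplification, since the Laplace representation is hypothesized rather than derived. Second, for regularity the paper simply cites the proof of Lemma \ref{regular2} (whose Case 1 applies because $1$ is in the support of the radial measure), whereas you verify the Stahl--Totik criterion of Section \ref{reg} directly: rotation invariance makes the monomials the orthogonal polynomials, so $\kappa_n=h(n)^{-1/2}$, and the two-sided bound $(1-\varepsilon)^n\,\tilde\gamma\bigl((1-\varepsilon,1]\bigr)\leq h(n)\leq 1$ yields $\kappa_n^{1/n}\rightarrow1$; your remark that $\supp(\mu_0)$ contains the unit circle (hence has capacity $1$) is exactly the point needed for this root asymptotic to certify regularity on $\bard$, and it is correct. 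Your closing verification of \eqref{etacond} via log-convexity of the moment sequence (Cauchy--Schwarz gives $h(n+1)^2\leq h(n)h(n+2)$, so the ratios $h(n+1)/h(n)$ increase, and since their geometric means $h(n)^{1/n}$ tend to $1$ the ratios do too) is a correct bonus that the paper leaves implicit.
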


\begin{proof}
From the definition of quasi-integral, it suffices to show that the sequence $\{h(n)\}_{n=0}^{\infty}$ is integral.  Notice that $h$ is expressed as the Laplace transform of a positive measure on $[0,\infty)$, so it is a completely monotone function.  It follows that the sequence $\{h(n)\}_{n=0}^{\infty}$ is a completely monotone sequence.  The solution to the Hausdorff Moment Problem (see, e.g., \cite[p.~224-228]{HM}) implies that the possible moments of a measure on $[0,1]$ are determined by the complete monotonicity property and hence this is the sequence of moments for some probability measure $\nu$ on $[0,1]$.  Since $0\in\supp(\gamma)$, we know that the sequence $\{h(n)\}_{n=0}^{\infty}$ does not decay exponentially, so $1\in\supp(\nu)$. By a change of variables, we deduce the existence of a probability measure $\tilde{\nu}$ on $[0,1]$ with $1\in\supp(\tilde{\nu})$ so that
\[
h(n)=\int_0^1r^{2n}d\tilde{\nu}(r).
\]
If we set $d\mu(r\eitheta)=d\tilde{\nu}(r)\times \frac{d\theta}{2\pi}$, then the proof of Lemma \ref{regular2} implies that $\mu$ is regular because $1\in\supp(\tilde{\nu})$.  By construction, $\|z^n\|^2_{L^2(\mu)}=h(n)$, so $\{h(n)\}_{n=0}^{\infty}$ is integral as desired.
\end{proof}

%%%%%%%%%%%%%%%%%%%%%%%%%%%%%%%%%%%%%%%%%
\section{A Special Case}\label{particular}

In this section, we explicitly compute the optimal approximants in $D_0$ for
the function $f(z)=(1-z)^{a}$, $\Real a>0$, thus completing the
calculations in the example in \cite{BKLSS}.

\begin{theorem}\label{2conj}
Let $a$ be a complex number with positive real part, i.e., $\Real a>0$. The
optimal degree $n$ polynomial approximant to  $f(z)=(1-z)^{a}$ in
$D_{0}$ is given by
\[
p_n(z)=\sum_{k=0}^n\left(\binom{a+k-1}{k}\frac{B(n+a+1,\overline
a)}{B(n-k+1,\overline a)}\right)z^k,
\]
where $B(x,y)$ is the standard Euler beta function.
\end{theorem}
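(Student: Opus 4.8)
The plan is to verify the claimed formula directly, by checking that it satisfies the normal equations that uniquely characterize the optimal approximant. Writing $p_n(z)=\sum_{k=0}^n c_k z^k$ with $c_k=\binom{a+k-1}{k}\frac{B(n+a+1,\overline a)}{B(n-k+1,\overline a)}$, I would first recall that in $D_0=H^2$ the optimal approximant is the unique polynomial of degree at most $n$ for which $p_nf-1\perp z^jf$ for every $j=0,1,\dots,n$ (this is the orthogonal projection characterization recalled in Section~\ref{Intro}). Since $f(0)=(1-0)^a=1$, we have $\langle 1,z^jf\rangle_{\omega}=\overline{(z^jf)(0)}=\delta_{j0}$, so the task reduces to proving the $n+1$ scalar identities $\sum_{k=0}^n c_k\,M_{k,j}=\delta_{j0}$, where $M_{k,j}:=\langle z^kf,z^jf\rangle_{H^2}$.

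The second step is to evaluate the moments $M_{k,j}$. Passing to boundary values and using $\overline{(1-e^{i\theta})^a}=(1-e^{-i\theta})^{\overline a}$, I would write
\[
M_{k,j}=\frac{1}{2\pi}\int_0^{2\pi}e^{i(k-j)\theta}(1-e^{i\theta})^a(1-e^{-i\theta})^{\overline a}\,d\theta,
\]
which converges because $2\Real a>-1$. Expanding both binomial factors in Fourier series and applying the Chu--Vandermonde identity (equivalently, evaluating this classical Beta-type integral directly) gives the closed form
\[
M_{k,j}=(-1)^{k-j}\frac{\Gamma(1+a+\overline a)}{\Gamma(1+a+k-j)\,\Gamma(1+\overline a-k+j)}.
\]
I would sanity-check this against $a=1$ (where $f=1-z$ and one recovers $c_k=\frac{n+1-k}{n+2}$) and against small $n$.

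The heart of the proof is the third step, where I substitute the explicit $c_k$ and $M_{k,j}$ into $S_j:=\sum_{k=0}^n c_k M_{k,j}$ and recognize the result as a terminating hypergeometric series. Writing everything in Gamma functions and converting to Pochhammer symbols $(x)_k:=x(x+1)\cdots(x+k-1)$, all powers of $(-1)^k$ cancel and one obtains
\[
S_j=A_j\cdot{}_3F_2\!\left(a,\,-n,\,-(\overline a+j);\ -(n+\overline a),\,1+a-j;\ 1\right),
\]
for an explicit nonzero prefactor $A_j$ depending on $n,j,a,\overline a$. A direct check shows this ${}_3F_2$ is balanced (Saalsch\"utzian): the sum of its lower parameters exceeds the sum of its upper parameters by exactly $1$. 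Hence the Pfaff--Saalsch\"utz summation theorem applies and evaluates it in closed form, proportional to $(j-n)_n$. For $1\le j\le n$ the factor $(j-n)_n$ is a product of $n$ consecutive integers running from $j-n\le 0$ to $j-1\ge 0$, so it contains the factor $0$ and the series vanishes, giving $S_j=0$. For $j=0$ the relevant factor is $(-n)_n=(-1)^n n!\neq 0$, and a routine cancellation of the remaining Gamma factors collapses $A_0$ times the Saalsch\"utz value to exactly $1$, yielding $S_0=1$. Together these give $\sum_k c_k M_{k,j}=\delta_{j0}$, and uniqueness of the optimal approximant finishes the proof.

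The main obstacle is the bookkeeping in the third step and, more seriously, the degenerate instances of Pfaff--Saalsch\"utz: when $a$ is a positive integer the denominator Pochhammer $(j-n-a)_n$ can also vanish, producing a $0/0$. I would circumvent this by treating $a$ and $\overline a$ as two independent complex parameters, proving $S_j=\delta_{j0}$ first for generic parameter values (where no denominator vanishes) and then extending to all $\Real a>0$, and in particular to the diagonal $\overline a=\overline{a}$, by analytic continuation, since both sides are meromorphic in the parameters. An alternative, more hands-on route would be to guess a closed form for the residual $R_n=p_nf-1$ (for $a=1$ it equals $-\frac{1}{n+2}\,\frac{1-z^{n+2}}{1-z}$) and verify orthogonality from its coefficients directly; but the hypergeometric argument above is cleaner and uniform in $a$.
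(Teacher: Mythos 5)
Your proposal is correct, but it proves the theorem by a genuinely different route than the paper. The paper \emph{derives} the formula: it invokes the explicitly known Szeg\H{o} polynomials for the weight $|(1-e^{i\theta})^{a}|^2$ from \cite{Sri} (and \cite{IW} for real $a$), uses the identity $p_n(z)=\overline{\varphi_n^*(0)}\varphi_n^*(z)$ coming from \cite[Section 3]{BKLSS} and the Christoffel--Darboux formula, and then simplifies the resulting ${}_2F_1$ expressions via the Chu--Vandermonde evaluation \cite[Equation (8.3.7)]{BW}. You instead \emph{verify} the stated formula against the normal equations: your Toeplitz moment computation is right (with $b$ in place of $\overline a$, Gauss's theorem gives $M_{k,j}=(-1)^{k-j}\Gamma(1+a+b)/\bigl(\Gamma(1+a+k-j)\Gamma(1+b+j-k)\bigr)$, convergent since $\Real(a+b)>-1$), and the Pochhammer bookkeeping indeed produces a terminating balanced ${}_3F_2\!\left(a,-n,-\overline a-j;-n-\overline a,1+a-j;1\right)$; Pfaff--Saalsch\"utz then yields a factor $(1-j)_n$ (equivalently your $(j-n)_n$, depending on which lower parameter plays the role of $d$), which vanishes for $1\le j\le n$ and collapses to give $S_0=1$ at $j=0$, as I have checked. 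Your flagged degeneracy is real --- for positive integer $a\le j-1$ the prefactor and the ${}_3F_2$ produce a $0\cdot\infty$ --- and your fix is sound: since $1/\Gamma$ is entire, both $c_k$ and $M_{k,j}$, viewed as functions of independent parameters $(a,b)$, are holomorphic on $\{\Real a>0,\,\Real b>0\}$, so the identity $\sum_k c_k M_{k,j}=\delta_{j0}$ extends from generic parameters to the diagonal $b=\overline a$ by analytic continuation, and uniqueness of the projection onto $f\cdot\Pol_n$ finishes the argument. What each approach buys: the paper's proof is shorter and explains where the formula comes from, but leans on the literature for the orthonormal polynomials; yours is self-contained modulo the classical Gauss and Pfaff--Saalsch\"utz summation theorems and is uniform in complex $a$, at the price of the continuation argument for degenerate integer parameters. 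Note also that your route silently re-proves the orthogonality information the paper imports, so the two proofs are hypergeometric cousins rather than duplicates.
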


\begin{proof}
The main observation that allows us to complete the proof is that
the orthonormal polynomials for the measure $|(1-z)^{a}|^2d|z|$ on
the unit circle are known explicitly.  In the case $\Imag a=0$, a
formula for these polynomials is given in \cite[Example 1]{IW}. More
generally, a formula for these polynomials is provided in
\cite{Sri}.  Let us denote these polynomials by
$\{\varphi_k(z)\}_{k\geq0}$, where $\varphi_k(z)$ has degree exactly
$k$ and let $\varphi_k^*(z)$ be the reversed polynomial defined by
\[
\varphi_k^*(z):=z^k\overline{\varphi_k(1/\overline{z})}.
\]
According to the results in \cite[Section 3]{BKLSS} and the
Christoffel-Darboux formula (see \cite[page 124]{OPUC1}), the
polynomial $p_n$ is given by
\[
p_n(z)=\overline{\varphi_n^*(0)}\varphi_n^*(z).
\]
Using the formula in \cite[Theorem 4.2]{Sri} and also \cite[Theorems
3.1 and 4.1]{Sri}, we can write
\[
p_n(z)=\frac{|\Gamma(a+1)|\,\Gamma(2\Real a+1+n)}{\Gamma(2\Real
a+1)^2\,n!}\,\,\overline{_2F_1(-n,a;2\Real
a+1;1)}\,_2F_1(-n,a;2\Real a+1;1-z).
\]
Using \cite[Equation (8.3.7)]{BW}, we can rewrite this as
\[
p_n(z)=\frac{|\Gamma(1+a+n)|^2}{\Gamma(2\Real
a+1+n)\Gamma(n+1)}\,_2F_1(-n,a;-\overline a-n;z).
\]
The coefficient of $z^k$ in this polynomial is
\[
\frac{\Gamma(a+n+1)\Gamma(a+k)\Gamma(\overline a+n-k+1)}{\Gamma(2\Real
a+n+1)\Gamma(k+1)\Gamma(n-k+1)\Gamma(a)},
\]
which yields the desired formula after some simplification.
\end{proof}

\section{Concluding Remarks}\label{ConcRem}
Let us finish with some concluding remarks and interesting directions for
future research.

\subsection{} We have already mentioned (see Corollary \ref{derivative_kernel} and the subsequent remark) that the extremal functions $f^*$ for $\mathcal{A}^2_{\beta}$, $\beta > -1$, are surprisingly equal to the derivatives of the reproducing kernel for $\mathcal{A}^2_{\beta}$ evaluated at a special value $\lambda_0$. The proof of this fact
 comes from a direct computation; however, it would be desirable to obtain a deeper understanding of this phenomenon from a ``high ground" point of view.  Moreover, these reproducing kernels are explicitly known, and as a consequence of Corollary \ref{derivative_kernel}, we see that the extremal functions $f^*$ do not have any zeros in the disk, which as mentioned earlier would lead to a more streamlined proof of Theorem \ref{Bergmansolution}.
Often, as for instance in the classical Bergman space setting, reproducing kernels with respect to logarithmically subharmonic weights do not vanish (see \cite{DKS,DS}), yet not much is known for the derivatives of reproducing kernels (see \cite{MS}). If Theorem \ref{fdiff} could be qualitatively extended to other $D_{\alpha}$ spaces (for $\alpha <0$ not an integer) and the connection between extremal functions and derivatives of reproducing kernels can be exploited, it would provide a tight grasp on the zeros of the reproducing kernels in those spaces, a promising and challenging path.

\subsection{}  In all of our examples, the extremal functions $f^*$, although analytic in a disk larger than the unit disk, do have a finite radius of convergence (see Theorems \ref{Bergmansolution} and \ref{fdiff}) and thus cannot be entire (as opposed to the extremal solutions in \cite{McD}).  On the other hand, all the extremal functions we calculated explicitly are analytically continuable to the whole complex plane except at one point, where they either have a polar singularity as in Theorem \ref{Bergmansolution} or a logarithmic singularity (and thus are multi-valued) as in Theorem \ref{fdiff}.  Does this phenomenon hold for all $D_{\alpha}$ spaces, when $\alpha < 0$ is not an integer, and can we find an explicit solution to the extremal problem \eqref{extremal} in that case?

\subsection{}  For the Dirichlet-type spaces $D_\alpha$ for $\alpha<0$ we do not know the precise value of $\|\mcj_\omega\|$ except in the one case $\alpha=-1$. We only have numerical computations and upper estimates from Theorem \ref{newb} as displayed in Figure 1. While a precise calculation of this norm for all the spaces $\{D_\alpha\}_{\alpha<0}$ would be ideal, it is also clear from Figure 1 that there is room for improvement in estimates of these norms, especially as $\alpha\rightarrow-\infty$.

\subsection{}  We have shown that in certain cases, e.g., the Bergman type spaces, the zeros of optimal approximants and hence the zeros of the associated weighted reproducing kernels can come into the unit disk,
although they will never penetrate a smaller concentric disk.  In \cite{DKS,DS}, it was observed that the reproducing kernel in the Bergman space with a logarithmically subharmonic weight $w>0$ (such as $w = |f|^p$ for $p > 0$ and $f$ analytic in $\bbD$) \emph{never} vanishes in $\bbD$.  Thus, as $n \rightarrow \infty$, the zeros of the optimal approximants $p_n$ to $1/f$, and hence the zeros of the reproducing kernels in the weighted space $\Pol_n (|f|^2 dA)$, are eventually pushed out of any compact subset of $\bbD$ as $n\rightarrow \infty$ (this was also briefly remarked upon in \cite{BKLSS}). Can we assume then that the same behavior of zeros persists for all $D_{\alpha}$ spaces?  A word of caution, however: there are examples of weighted Bergman spaces (see \cite{HZ}) whose
reproducing kernels may have zeros in the disk.

\subsection{}   We have so far  focused on  the positions of single zeros of
optimal approximants.  If we pick a finite set in $\bbC \backslash
\overline{\bbD}$, $\{z_1,...,z_t\}$, then we can construct a
function which belongs to all the $H^2_\omega$ spaces, for which the
optimal polynomial of degree $t$ has exactly those zeros: it is
enough to take the function $f(z)=1/\prod_{i=1}^t (z-z_i)$. Thus,
for the case of nondecreasing $\omega$ the question is then
completely solved in Theorem \ref{dalpha}: a finite set is
achievable as the zero set of an optimal approximant if and only if
the set is contained in $\bbC \backslash \overline{\bbD}$.  For
other sequences $\omega$, in particular decreasing sequences, the
question of which finite sets are achievable as such zero sets is
open. In the case of the $\mathcal{A}^2_\beta$ spaces, since the
extremal functions we found in Theorem \ref{Bergmansolution} don't
have any zeros and the extremals are essentially unique, it is not
possible to find a pair of zeros in which one of them has the
minimal modulus. For general $\omega$ decreasing to $0$, the problem
of determining all the possible configurations of finite sets seems
quite difficult. We can nevertheless find functions whose optimal
approximants have any number of zeros inside the unit disk. Recall
that monomials $\{z^n\}_{n \in \bbN}$ form an orthogonal basis in
$H^2_\omega$. Therefore,
if a function $f=f(z)$ depends only on $z^s$ for some positive integer $s$,
 then the matrix given by
 $\left(M\right)_{j,k} = \left<z^jf,z^kf\right>$, used in \cite{BCLSS} to compute the
 optimal approximants, has zeros in all the positions for which
 $j-k$ is not divisible by $s$.  One can then
use this fact to show that the optimal approximants
to $1/f$ depend only on $z^s$. Consider for example, the function
$f_{k,n}(z) = z^k T_n\left(\frac{1+z}{1-z}\right)$, which appeared
in the proof of Theorem \ref{dalpha}, and a sequence $\omega$
strictly decreasing to $0$. Let $g_r(z)= f_{k,n}(z^r)$ and let $q$
be the optimal approximant of degree $r$ to $1/g_r$. Using the
constructive method to build the approximants described in
\cite{BKLSS}, it is easy to show that if $z_0$ is any root of $q$,
then
$$|z_0|^r = \frac{\|z^r g_r\|_\omega^2}{|\left<g_r,z^r g_r\right>_\omega|}.$$ By choosing $n$
large enough so that $\omega_{kr+1} > 4 \omega_{nr+kr+1}$, the same
computations as
in Theorem \ref{dalpha} will yield that the $r$ roots of $q$ are
inside the unit disk, and moreover,  all lie on a concentric circle. In fact, since
this optimal approximant only depends on $z^r$,
the zeros will also have equidistributed arguments.
The next natural step is to try to describe the zero
sets of approximants of order 2.  Another challenging route is to try to characterize the possible
positions of the zeros of the approximants of order 1 to $1/f$,
depending on the singularities and zeros  of a function $f \in
H^2_\omega$.

\subsection{}  One could also fix the sequence $\omega$ and investigate the behavior of the roots of higher degree approximants. Of course, here we assume that the function $f$ is chosen such that the sequence of optimal approximants does not stagnate, but consists of polynomials with arbitrarily large degrees $n$. Conditions on $f$ such that this happens are well-known, e.g.~$f$ analytically extendable beyond the unit circle and attaining the value zero on the boundary. For fixed $n$ one could, for example, ask to find $f$ that guarantees the degree $n$ approximant (assuming one with exactly that degree exists) to have a root of minimal modulus, or for the sum of moduli of the roots to be minimal.
This is a different problem than the one we have solved in the current paper, since $f$ might have other roots, and we would be forced to restrict the set of functions over which we are solving the extremal problem to those that have the specified roots.

\subsection{}  It is natural to propose a more detailed study of the convergence of zeros of optimal polynomials to the unit circle touched upon in Section \ref{Jentzsch}.  In particular, it would be interesting to study the geometry of the paths along which the zeros ``run" to the unit circle and the connection between these and the particular
singularities of $1/f$ on $\mathbb{T}$. Some examples and numerics were obtained in \cite{BCLSS} and detailed results are known in the case of the Hardy space $H^2$, but the evidence needed to formulate precise conjectures is still thin.  Yet this avenue presents itself as a promising future inquiry.

\end{document}